\Crefname{Assumption}{Assumption}{Assumptions}
\Crefname{Theorem}{Theorem}{Theorems}
\Crefname{Lemma}{Lemma}{Lemmas}
\Crefname{Corollary}{Corollary}{Corollaries}
\Crefname{Proposition}{Proposition}{Propositions}
\Crefname{Theorem}{Theorem}{Theorems}
\Crefname{Conjecture}{Conjecture}{Conjectures}
\Crefname{Remark}{Remark}{Remarks}
\newtheorem{Theorem}{Theorem}[section]
\newtheorem{Proposition}[Theorem]{Proposition}
\newtheorem{Lemma}[Theorem]{Lemma}
\newtheorem{conjecture}[Theorem]{Conjecture}
\newcommand{\N}{\mathbb N}
\newcommand{\R}{\mathbb R}
\newcommand{\eps}{\varepsilon}
\DeclareMathOperator{\Tr}{tr}
\newcommand{\tr}{\Tr}
\DeclareMathOperator{\id}{Id}
\newcommand{\1}{\mathds{1}}
\newcommand{\vv}{\langle v\rangle}
\newcommand{\vvp}{\langle v'\rangle}
\newcommand{\calpha}{C_{x}^{\alpha/3}C_{v}^{\alpha}}
\newcommand{\calphamu}{C_{x}^{\mu\alpha/3}C_{v}^{\mu\alpha}}
\newcommand{\logalpha}{C_x^{\alpha/3} \log(1/C_v)^{-\theta}}
\newcommand{\logalphamu}{C_x^{\mu\alpha/3} \log(1/C_v)^{-\mu\theta}}
\newcommand{\logalphamup}{C_x^{\mu'\alpha/3} \log(1/C_v)^{-\mu'\theta}}
\newcommand{\logvalpha}{\log(1/C_v)^{-\theta}}
\newcommand{\vvo}{\langle v_0 \rangle}
\newcommand{\cP}{\mathcal P}
\renewcommand{\epsilon}{\eps}
\DeclareMathOperator{\Id}{Id}
\DeclareMathOperator{\supp}{supp}
\newcommand{\be}{\begin{equation}}
\newcommand{\ee}{\end{equation}}
\numberwithin{equation}{section}
\title[Time-irregular Schauder estimates and the Landau equation]{Kinetic Schauder estimates with time-irregular coefficients and uniqueness for the Landau equation}
\author{Christopher Henderson}
\address{Department of Mathematics, University of Arizona, Tucson, AZ 85721}
\email{ckhenderson@math.arizona.edu}
\author{Weinan Wang}
\address{Department of Mathematics, University of Arizona, Tucson, AZ 85721}
\email{weinanwang@math.arizona.edu}
\begin{document}

\begin{abstract}
	We prove a Schauder estimate for kinetic Fokker-Planck equations that requires only H\"older regularity in space and velocity but not in time.  As an application, we deduce a weak-strong uniqueness result of classical solutions to the spatially inhomogeneous Landau equation beginning from initial data having H\"older regularity in $x$ and only a logarithmic modulus of continuity in $v$.  This replaces an earlier result requiring H\"older continuity in both variables.
\end{abstract}

\maketitle

\section{Introduction}

This paper is concerned with the regularity of kinetic Fokker-Planck type equations of the form
\be\label{e.kfp_gen}
	(\partial_t + v\cdot\nabla_x) f
		= \tr(\bar a D^2_v f) + \bar b \cdot \nabla_v f + \bar c f
			+ g
				\qquad\text{ in } (0,T)\times \R^d \times \R^d
\ee
and the applications of this regularity theory to the Landau equation, which, roughly, is a fundamental model from gas dynamics for the evolution of a density of colliding particles~\cite{villani2002review, mouhot2018review}. 

Interest in the regularity of equations of the form~\eqref{e.kfp_gen} dates back to Kolmogorov~\cite{Kolmogorov1934}, who studied it with the choices $\bar a = \Id$, $\bar b = v$, and $\bar c = d$.  Kolmogorov explicitly computed the fundamental solution, which readily yields smoothing\footnote{It does not seem to be explicitly stated in~\cite{Kolmogorov1934} that Kolmogorov noticed the smoothing effect.  As a result, it is not clear when this ``hypoelliptic'' behavior was first identified in the simple setting Kolmogorov considered.} of $f$ in all variables despite only being elliptic in the $v$-variable.  We note two other computations of the fundamental solution in more general settings by Il'in~\cite{Ilin} and Weber~\cite{Weber}.  Eventually the observation that, in the setting of~\eqref{e.kfp_gen}, regularity in $v$ transfers to regularity in $x$ due to the transport operator $\partial_t + v\cdot\nabla_x$  led to H\"ormander's development of the theory of hypoellipticity~\cite{Hormander}.

Over the past few decades, a robust understanding of the role of the transport operator $\partial_t + v\cdot\nabla_x$ in regularity theory has been developed in the setting of Sobolev spaces.  The literature is truly vast, so we only cite a few prominent examples~\cite{villani_hypo,GLPS,Bouchut}; although, we encourage the reader to explore the references therein and the work that developed as a result.

More recently, there has been interest in precise quantitative estimates of regularity of solutions to kinetic equations in analogy with the regularity theory for parabolic equations.  In particular, the interest has been in the development of estimates in {\em continuity spaces}, such as H\"older spaces.  A suitable Harnack inequality has been proven~\cite{GuerandMouhot, GIMV, Wang-Zhang, Wang-Zhang-2, Zhu2021}, which yields the H\"older regularity of solutions to {\em divergence form} kinetic Fokker-Planck equations when the coefficients are merely bounded and elliptic-in-$v$ (note that~\eqref{e.kfp_gen} is in {\em non-divergence form}).  A Harnack inequality for non-divergence form kinetic operators remains elusive~\cite{SilvestreOpen}.

Additionally, Schauder estimates have been deduced and applied to various kinetic models~\cite{ImbertMouhot-toy, Manfredini, Polidoro, HS, bramanti2007schauder} (see also~\cite{Hao-2020, imbert2018schauder,ImbertSilvestre_survey} for estimates in the kinetic integro-differential setting).  These estimates yield bounds on higher H\"older regularity of the solution as long as the coefficients $\bar a$, $\bar b$, and $\bar c$ are H\"older continuous in all variables.

Our interest here is to investigate the minimal assumptions on the coefficients in~\eqref{e.kfp_gen} for proving the Schauder estimates.  As we detail below, this is inspired by the connection between this question and the conditions needed to prove uniqueness of solutions to the Landau equation.  Indeed, despite being nonlinear, the coefficients of the highest order terms in the Landau equation enjoy {\em better} regularity in $v$ than $f$ does.  It is, thus, natural to hope that only regularity in $v$ is necessary to prove the Schauder estimates.  We note that there are a number of related equations with similar structure to which the methods developed here may be applied: e.g., isotropic Landau~\cite{GualdaniZamponi-Isotropic-Landau,Isotropic-Landau, Gualdani-Guillen,Gualdani-review}, the Imbert-Mouhot toy model~\cite{Anceschi-Zhu,ImbertMouhot-toy}, and the Vlasov-Poisson-Landau equation~\cite{Guo-VPL}.

For parabolic equations, minimal assumptions  for the Schauder estimates similar to those considered here were first investigated by Brandt~\cite{Brandt}, who showed that H\"older regularity in $t$ is not necessary to establish partial Schauder estimates.  More precisely, one needs only have boundedness in $t$ and H\"older continuity in the spatial variables $x$ in order to show that $D^2_x f$ is H\"older continuous in $x$.  Knerr~\cite{Knerr} later strengthened this to deduce time regularity of $f$ under the same assumptions.  These two papers are the main inspiration for the present manuscript.  Their strategies are based on the comparison principle and are quite different from that used here, as we detail below.  There has been a large body of literature on this over the ensuing decades, see, e.g.,~\cite{DongJinZhang, DongSeick, Lieberman, SinestrarivonWahl}.

In this paper, we show that H\"older regularity in the time variable $t$ is not necessary to establish partial Schauder estimates for kinetic Fokker-Planck equations.  As an application of this, we deduce a weak-strong uniqueness result for classical solutions of the Landau equation starting from initial data that is $C^\alpha$ in $x$ and has a logarithmic modulus of continuity in $v$.  This improves upon an earlier uniqueness result in which H\"older regularity was required in both variables, and it indicates that the role of regularity in the uniqueness theory may be more technical than fundamental (although probably not nonexistent).  Below, we expand on this in detail and formalize a conjecture on less restrictive assumptions for uniqueness to hold.

We now make our main results more precise.

\subsection{Schauder estimates}

For simplicity, we consider the slightly less general equation
\be\label{e.kfp}
	(\partial_t + v\cdot\nabla_x) f
		= \tr(\bar a(t,x,v) D^2_v f) + \bar c(t,x,v) f
			+ g(t,x,v).
\ee
We note, however, that this essentially does not lose any generality.  This is discussed after the statement of the main result of this section \Cref{t.Schauder}.

We assume that $\bar a$ is uniformly elliptic and $\bar c$ and $g$ are bounded: there is $\Lambda>1$ such that
\be\label{e.ellipticity}
	\frac{1}{\Lambda} \Id
		\leq \bar a(t,x,v)
		\leq \Lambda \Id
	\qquad\text{and}\qquad
	|\bar c(t,x,v)|, |g(t,x,v)|
		\leq \Lambda.
\ee
We also assume that $\bar a$, $\bar c$, and $g$ are H\"older continuous in $(x,v)$: $\bar a, \bar c, g \in \calpha$.  The notation for this H\"older space is defined in \Cref{s.notation}.

Our first result is a general Schauder estimate that does not require the $t$-regularity of the coefficients.  Its proof is found in \Cref{s.Schauder}.

\begin{Theorem}\label{t.Schauder}
Fix $\alpha \in (0,1)$.  Assume that~\eqref{e.ellipticity} holds and $f, D^2_v f, \bar a, \bar c, g \in \calpha(Q_1)$. 
Then
\begin{equation}\label{e.w01151}
	\begin{split}
		[f]_{C_x^{(2+\alpha)/3}(Q_{1/2})}
		+ [D^2_v f]_{C_x^{\alpha/3}C^\alpha_v(Q_{1/2})}
		\lesssim
			&\left(1+ [c]_{\calpha(Q_1)} + [a]_{\calpha(Q_1)}^{1+ \frac{2}{\alpha}}\right) \|f\|_{L^\infty(Q_1)}
			\\&
				+ \left(1 + [a]_{\calpha(Q_1)}\right)
				[g]_{\calpha(Q_1)}.
	\end{split}
\end{equation}
The implied constant depends only on $d$, $\alpha$, and $\Lambda$.
\end{Theorem}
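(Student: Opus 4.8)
The plan is to prove \Cref{t.Schauder} by a localization-and-freezing argument adapted to the kinetic scaling, following the broad strategy of parabolic Schauder theory but replacing the standard heat kernel with the Kolmogorov kernel and carefully avoiding any use of time-regularity of $\bar a$, $\bar c$, $g$. The crucial structural point, as emphasized in the introduction, is that the coefficient of the top-order term depends only on $(x,v)$ up to a bounded time-dependent perturbation; so at each kinetic cylinder $\cSro$ we freeze $\bar a$ at the \emph{spatial-velocity} base point $z_0 = (t_0,x_0,v_0)$ but \emph{keep} the full time dependence, writing $\bar a(t,x,v) = \bar a(t, x_0, v_0) + (\bar a(t,x,v) - \bar a(t,x_0,v_0))$. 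The first piece is a function of $t$ alone — call it $A(t)$ — and the constant-in-space operator $\partial_t + v\cdot\nabla_x - \tr(A(t) D_v^2)$ still has an explicit, scale-invariant fundamental solution (a time-dependent Gaussian in $v$, sheared by the transport flow), so one has clean kinetic Schauder/interpolation estimates for it with constants depending only on $d,\alpha,\Lambda$. This is the step where Brandt--Knerr-type reasoning enters: one never differentiates $A(t)$, one only integrates against the kernel, which tolerates merely bounded, measurable time dependence.

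Concretely, the steps I would carry out are: (1) Set up the kinetic cylinders $\cSro$ (or the boxes $Q_r$), the kinetic distance, and the associated H\"older seminorms $[\cdot]_{\calpha}$, and record the scaling $z\mapsto z_{r}$ under which \eqref{e.kfp} is invariant; this fixes the exponents $\alpha/3$ in $x$ and $(2+\alpha)/3$ in $x$ for the second-order term, consistent with the statement. (2) Prove an a priori estimate on small cylinders for the frozen equation $(\partial_t + v\cdot\nabla_x) f = \tr(A(t) D_v^2 f) + h$: using the explicit kernel, bound $[D_v^2 f]_{C_x^{\alpha/3}C_v^\alpha}$ by $[h]_{\calpha} + \|f\|_{L^\infty}$ on a smaller cylinder, and similarly control $[f]_{C_x^{(2+\alpha)/3}}$; the key is that all bounds are uniform over choices of $A(t)$ with $\Lambda^{-1}\Id \le A(t) \le \Lambda \Id$. (3) Treat the genuine equation by perturbation: write $h = (\tr(\bar a(t,x,v) - A(t))D_v^2 f) + \bar c f + g$. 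On a cylinder of radius $r$, the first term has $\calpha$-seminorm bounded by $[\bar a]_{\calpha} r^{\alpha}\, [D_v^2 f]_{\calpha} + [\bar a]_{\calpha}\|D_v^2 f\|_{L^\infty}$ (using $|\bar a(t,x,v)-A(t)| \lesssim [\bar a]_{\calpha} r^\alpha$ inside the cylinder), so for $r$ chosen small depending on $[\bar a]_{\calpha}$ the leading term is absorbed; this choice of $r \sim [\bar a]_{\calpha}^{-1/\alpha}$ together with the number of cylinders needed to cover $Q_{1/2}$ produces exactly the $[\bar a]_{\calpha}^{1+2/\alpha}$ factor in \eqref{e.w01151}. (4) Run a covering/iteration argument over a dyadic family of cylinders to globalize from the small-scale estimate to $Q_{1/2}$, and fold in an interpolation inequality of the form $\|D_v^2 f\|_{L^\infty} \lesssim \delta [D_v^2 f]_{\calpha} + C_\delta \|f\|_{L^\infty}$ (again uniform in time) to close the estimate, picking up the dependence on $[\bar c]_{\calpha}$ and on $(1+[\bar a]_{\calpha})[g]_{\calpha}$ in the lower-order terms. (5) Recover the gain of time-regularity of $f$ implicitly through the $C_x^{(2+\alpha)/3}$ bound — regularity in $v$ and the transport structure upgrade to regularity in $x$ exactly as in the classical hypoelliptic mechanism — so no separate time estimate is needed in the statement.

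The main obstacle I anticipate is step (2): establishing the frozen-operator Schauder estimate with a time-dependent-but-non-H\"older $A(t)$. With a genuinely constant $A$ one can diagonalize and use the standard Kolmogorov Gaussian; with $A(t)$ one must work with the propagator $U(t,s)$ of $\partial_t + v\cdot\nabla_x - \tr(A(t)D_v^2)$, verify it has Gaussian upper and lower bounds and kernel derivative bounds uniform over all admissible $A(\cdot)$, and then prove the Campanato-type characterization (mean-oscillation decay $\Rightarrow$ H\"older) survives in this setting. The delicate point is that the natural spatial covariance of the kernel after time $\tau$ is $\int_0^\tau A(\sigma)\,d\sigma$ in $v$ and $\int_0^\tau (\tau-\sigma)^2 A(\sigma)\,d\sigma$ in $x$ (sheared), both of which lie between fixed multiples of $\tau\Id$ and $\tau^3\Id$ respectively by ellipticity alone — so the kinetic scaling is preserved and the kernel estimates go through — but one has to check that the second-difference estimates on $D_v^2 U(t,s)$ that power the Schauder bound only ever see $A$ under an integral sign. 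I would handle this by a direct energy/parametrix computation or, alternatively, by a duality/Campanato argument on cylinders that never requires pointwise control of $\partial_t A$. Everything downstream (perturbation, covering, interpolation) is then routine bookkeeping of constants, with the one subtlety being to track the powers of $[\bar a]_{\calpha}$ honestly so as to land on the stated $1 + \tfrac{2}{\alpha}$ exponent.

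Finally, regarding the claim after the theorem statement that passing to \eqref{e.kfp} from \eqref{e.kfp_gen} loses no generality: the term $\bar b\cdot\nabla_v f$ can be absorbed by interpolation ($\|\nabla_v f\|_{L^\infty} \lesssim \delta [D_v^2 f]_{\calpha} + C_\delta \|f\|_{L^\infty}$) at the cost of enlarging the implied constant by a power of $[\bar b]_{\calpha}$, parallel to how $[\bar c]_{\calpha}$ appears; so the same proof applies. I would remark on this at the end rather than carry it through in the main argument.
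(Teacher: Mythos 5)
Your overall strategy is the same as the paper's: freeze $\bar a$ at the base point $(x_0,v_0)$ while keeping its time dependence, establish Schauder bounds for the resulting $(x,v)$-homogeneous operator $\partial_t + v\cdot\nabla_x - \tr(a(t)D_v^2)$ via explicit kernel estimates that are uniform over all measurable $a(t)$ with $\Lambda^{-1}\Id \le a(t) \le \Lambda\Id$, then perturb on cylinders of radius $\sim [a]_{\calpha}^{-1/\alpha}$ and iterate; this produces exactly the $[a]_{\calpha}^{1+2/\alpha}$ power. So the plan matches the paper at the level of architecture and constants.

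There is one place where what you wrote would lead you astray in executing step (2). You assert that the spatial covariance in the frozen kernel is $\int_0^\tau (\tau-\sigma)^2 A(\sigma)\,d\sigma$ and that this is comparable to $\tau^3\Id$ ``by ellipticity alone.'' That quantity is the \emph{marginal} covariance of the $X$-process, and indeed its two-sided bound is trivial. But the matrix that actually appears in the fundamental solution — once you complete the square to write $\Gamma_{\bar a}$ in the form $\exp\{-\tfrac14 v\cdot A_0^{-1}v - (x-Mv)\cdot P^{-1}(x-Mv)\}$ — is the \emph{Schur complement}
\[
P(t) = A_2(t) - A_1(t)A_0(t)^{-1}A_1(t), \qquad A_i(t)=\int_0^t s^i a(s)\,ds,
\]
the conditional covariance of $X$ given $V$. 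Its upper bound $P\lesssim t^3\Id$ is immediate, but the lower bound $P\gtrsim t^3\Id$ is precisely what the paper singles out as the nonobvious step and devotes an involved, three-stage dynamical argument to (their Lemma 2.4, tracking the flow $M'(s)=\Id - A_0(s)^{-1}a(s)M(s)$). It does in fact follow from ellipticity alone — use the variational characterization $w\cdot Pw = \min_u \int_0^t \big(u-(t-s)w\big)\cdot a(s)\big(u-(t-s)w\big)\,ds \ge \Lambda^{-1}\min_u\int_0^t |u-(t-s)w|^2\,ds = \tfrac{t^3}{12\Lambda}|w|^2$ — so your instinct is correct, but the object you wrote is the wrong one, and without noticing the distinction you would not see that a separate argument is needed for the Schur complement. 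Everything downstream (the homogeneous Schauder estimate via moment bounds on $\partial^j_t\partial_x^\beta\partial_v^\alpha\Gamma_{\bar a}$, the cutoff/iteration, the interpolation absorbing $\bar c$ and $\bar b$) then proceeds as you describe.
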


We note that a simple consequence of \Cref{t.Schauder} and~\eqref{e.kfp} is that
\be\label{e.c52408}
	\begin{split}
	[(\partial_t + v\cdot\nabla_x) f]_{\calpha(Q_{1/2})}
		\lesssim &\left(1+ [c]_{\calpha(Q_1)} + [a]_{\calpha(Q_1)}^{1+ \frac{2}{\alpha}}\right) \|f\|_{L^\infty(Q_1)}
			\\&
				+ \left(1 + [a]_{\calpha(Q_1)}\right)
				[g]_{\calpha(Q_1)}.
	\end{split}
\ee

Before commenting on the proof, we note that regularity in $t$ can easily be obtained at this point by two different methods.  The first is the hypoelliptic approach of \cite[Lemma~2.8]{ImbertMouhot-toy}.  The technique of the authors shows that shifts in $t$ decompose into a shifts in $v$ as well as shifts in transport (roughly, shifts according to the operator $\partial_t + v\cdot\nabla_x$).    The $v$-regularity is provided by \Cref{t.Schauder} and the transport regularity is provided by~\eqref{e.c52408}. 
The second approach is to notice that time regularity can easily be obtained  in the course of establishing \Cref{t.Schauder} with the same methods.  We did not opt for this due to (i) the desire for simplicity, (ii) the fact that time regularity does not play a role in our application (\Cref{t.Landau}), and (iii) the fact that the hypoelliptic approach of \cite[Lemma~2.8]{ImbertMouhot-toy} yields it in a simple manner as a consequence of \Cref{t.Schauder}.  Unfortunately, both approaches only provide H\"older continuity in $t$ of $f$ and do not yield regularity of $\partial_t f$.  For greater regularity in $t$, it appears that one needs more regularity of the coefficients.  We refer to~\cite{HS}.

\subsubsection{Strategy of the proof}

Our approach is along the lines of~\cite{HS}.  The proof proceeds with the same main two steps of every proof of Schauder estimates -- direct estimates for a ``homogeneous'' equation and then perturbing off of this ``homogeneous'' equation using the regularity of the coefficients.

The first step is slightly different from that of~\cite{HS}.  For us, the relevant homogeneous equation is the one where $\bar a(t,x,v) = \bar a(t)$.  Roughly, this allows us to perturb off of this case by using that, for $(x_0,v_0)$ fixed and $(x,v) \approx (x_0,v_0)$,
\be
	|\bar a(t,x,v) - \bar a(t,x_0,v_0)|
		\leq (|x-x_0|^{\alpha/3} + |v-v_0|^\alpha) [\bar a]_{\calpha}
		\ll 1.
\ee
Notice that this depends only on the regularity of $\bar a$ in $(x,v)$ and not in $t$.  We refer to this case, that is,~\eqref{e.kfp} when $\bar a$ does not depend on $(x,v)$ and $\bar c \equiv 0$, as the $(x,v)$-homogeneous equation.

It is worth discussing the $(x,v)$-homogeneous equation further.  Many proofs of the Schauder estimates for parabolic or kinetic equations (Brandt's \cite{Brandt} being a notable exception), hinge on the scaling in $t$ of moments of the fundamental solution $\Gamma_{\bar a}$, that is, integrals in $(x,v)$ of $\Gamma_{\bar a}$ with polynomial weights.   To obtain the estimates here, we compute the fundamental solution $\Gamma_{\bar a}$ explicitly (see \Cref{p.gamma_a}).  In~\cite{HS}, where the relevant homogeneous equation is $\bar a \equiv \Id$, it is essentially a basic calculus exercise to go from the explicit form of $\Gamma_{\Id}$ to the correct moment estimates.  In our setting, however, it is more difficult and requires a somewhat involved proof based on the dynamics of some matrix valued terms (see \Cref{l.w09032}).  Indeed, from \Cref{p.gamma_a} it is not even obvious that $\Gamma_{\bar a}$ is integrable in $(x,v)$.

The second step, that is,  the procedure of perturbing off of the homogeneous equation, proceeds as usual.

\subsubsection{Estimates for~\eqref{e.kfp_gen} versus~\eqref{e.kfp}}

As we mentioned above, there is essentially no loss in generality in considering~\eqref{e.kfp} in place of~\eqref{e.kfp_gen}.   The reason for this is that, one can obtain~\eqref{e.kfp} from~\eqref{e.kfp_gen} by letting
\be
	g_{\eqref{e.kfp}} = \bar b \cdot \nabla_v f + g_{\eqref{e.kfp_gen}}.
\ee
Here, to differentiate between the forcing term $g$ in~\eqref{e.kfp_gen} and the forcing term $g$ in~\eqref{e.kfp}, we use the equation number as the subscript.  
In this case, after applying \Cref{t.Schauder}, one has a $\calpha$-norm of $\nabla_v f$ on the right hand side of~\eqref{e.w01151}.  By interpolating, one can ``absorb'' this lower order term into the left hand side of~\eqref{e.w01151}.  While this is complicated by the different domains on which the norms are based, with $Q_{1/2}$ appearing on the left-hand side of~\eqref{e.w01151} and $Q_1$ appearing on the right-hand side of~\eqref{e.w01151}, it is generally possible to do, depending on the application.  The reader will surely have noticed that the same procedure should apply to the $\bar c$ term as well, and so the simplest presentation would consider only the case of~\eqref{e.kfp} with $\bar c \equiv 0$.  This is true; however, for the application we have in mind (\Cref{t.Landau}), it streamlines future computations to already have the explicit dependence on $[\bar c]_{\calpha}$.

\subsubsection{Further comments on related time irregular Schauder estimates}\label{s.further_Schauder}

As mentioned above, to our knowledge, the first result in this direction is due to Brandt~\cite{Brandt}, whose approach is entirely based on the comparison principle.  Indeed, in a very simple short paper, Brandt establishes Schauder estimates with precise dependence on the coefficients via the construction of an upper barrier for an appropriate finite difference of the solution to the parabolic equation under consideration.  Later, Knerr~\cite{Knerr} improved on the regularity obtained by Brandt by showing that, surprisingly, solutions had time regularity as well.  Knerr's strategy was also based on the comparison principle.

Unfortunately, despite~\eqref{e.kfp} enjoying a comparison principle, we were unable to adapt Brandt's strategy to the kinetic case.  We give a heuristic description of the obstruction.  One expects  ${\partial_t + v\cdot\nabla_x}$ in the kinetic case to act analogously to $\partial_t$ in the parabolic case.  A major difference, however, is that shifts in $t$ (the appropriate shifts related to $\partial_t$ regularity) have a directionality: time is one dimensional and there is a preferred direction, often called the ``arrow of time.''  Unfortunately, it is not clear what the analogue to this is in the kinetic setting with the operator $\partial_t + v\cdot\nabla_x$.  Very roughly, this is the roadblock to adapting Brandt's argument.  We note that this seems to be related to the impediment to proving a Harnack inequality for~\eqref{e.kfp_gen}  using the methods of Krylov and Safonov; see \cite[Section~8.2]{SilvestreOpen} for further discussion.

A few days prior to posting this manuscript to the arxiv, another very interesting paper was also posted by Biagi and Bramanti~\cite{BiagiBramanti} that investigates a similar problem to \Cref{t.Schauder}.  The authors consider ultraparabolic equations, a general class of equations that includes kinetic equations as a particular example, and they prove a Schauder estimate for time irregular coefficients.  Their proof proceeds along the same lines as ours; that is, they derive an explicit formula for the fundamental solution and use it to deduce the Schauder estimates.   
Their paper is focused entirely on the question of Schauder estimates of the form \Cref{t.Schauder} for a general family of ultraparabolic equations, and, as such, they do not consider applications of their theorem, as we do in \Cref{s.Landau_intro}.  Their work builds upon an earlier work of Bramanti and Polidoro~\cite{BramantiPolidoro} in which the fundamental solution of a class of ultaparabolic operators was studied in depth.  In particular, the authors construct it and establish that is has the appropriate regularity and Gaussian bounds.  We also mention connections to the other very recent preprint by Lucertini, Pagliarani, and Pascucci~\cite{LPP} in which the authors deduce optimal bounds on the higher regularity of the fundamental solution.  The estimates in~\cite{LPP} are strong enough to replace \Cref{l.w09031} in our proof of \Cref{t.Schauder}.  It seems likely that one could establish \Cref{t.Schauder} via an alternative approach to the Schauder estimates using the results of~\cite{LPP} directly.

\subsection{The Landau equation} \label{s.Landau_intro}

The Landau equation has the form:
\be\label{e.Landau}
	(\partial_t + v\cdot\nabla_x)f
		= \tr(\bar a^f D^2_v f) + \bar c^f f
			\qquad\text{ in } (0,T)\times \R^3\times\R^3,
\ee
where, for any function $h: \R^3 \to \R$,
\be\label{e.coefficients}
	\begin{split}
		&\bar a^h(t,x,v)
			= a_\gamma \int_{\R^3} \left(\Id - \frac{w\otimes w}{|w|^2}\right) |w|^{2+\gamma} h(v-w) \, dw
		\\&
		\bar c^h(t,x,v)
			= \begin{cases} c_\gamma \int_{\R^3} |w|^\gamma h(v-w) \, dw 
				\qquad&\text{ for } \gamma > -3,\\
			c_\gamma h
				\qquad&\text{ for } \gamma = -3.
			\end{cases}
	\end{split}
\ee
Here, $a_\gamma$ and $c_\gamma$ are positive constants whose exact value plays no role in the analysis and $\gamma \in [-3,0)$.  The physically relevant case is $\gamma=-3$.  We note that the regime $\gamma < 0$ is often called the {\em soft potentials} case and that the case $\gamma \in [0,1]$ is considered in many works, but we do not address it here.  We also note that~\eqref{e.Landau} is more often written in an equivalent divergence form, although that is not convenient for our work below.

We refer the reader to~\cite{villani2002review, mouhot2018review} for a general discussion of the Landau equation, its physical relevance, and its mathematical history.  We mention only that~\eqref{e.Landau} is nonlocal (that is, its coefficients at a point $(t,x,v)$ depend on the $f$ at other points $(t,x,v')$) and quasilinear (that is, the coefficient of the highest order term $\bar a^f$ depends on $f$).  As a result, the unconditional global well-posedness of classical solutions to~\eqref{e.Landau} is an extremely difficult problem that appears to be out of reach for the time being.

A new approach to this problem was initiated by Silvestre~\cite{Silvestre_Landau}, who proposed to study~\eqref{e.Landau} with methods coming from parabolic regularity theory under certain physically reasonable boundedness assumptions on the mass, energy, and entropy densities (see~\cite{ImbertSilvestre_survey} for a discussion of a similar program for the related Boltzmann equation).  We do not discuss these assumptions further, and we refer to this program as the {\em conditional regularity} program in the sequel.  These ideas have led to many new results, see, e.g.~\cite{GIMV, Cameron-Silvestre-Snelson, HS, ImbertMouhot-toy}.  The most relevant work to the present setting coming out of this program is~\cite{HST2019rough} that leveraged the ideas and theorems of the previous works~\cite{GIMV, Cameron-Silvestre-Snelson, HS} to obtain local well-posedness with fairly ``rough'' initial data.  In particular, the existence result in~\cite{HST2019rough} is in a weighted $L^\infty$ space, while the uniqueness result supposes, additionally, that the initial data is $C^\alpha$ for some $\alpha>0$.

One of the key insights used in the conditional regularity program is that, as previously mentioned, $\bar a^f$ enjoys better regularity in $v$ than $f$ does.  Indeed, $\bar a^f\in C^\alpha_v$ for any $\alpha \in (0,1)$ as long as $f$ is bounded and decays sufficiently quickly in $v$.   The gap between the existence and uniqueness results of of~\cite{HST2019rough}, described above, partially reflects the fact that the authors were not able to leverage this insight.  Our next result, a new uniqueness result for the Landau equation~\eqref{e.Landau}, provides a path in this direction.




\subsubsection{Uniqueness for the Landau equation}

We require the following non-degeneracy condition on $f_{\rm in}$:
there exist $r$, $\delta$, and $R>0$ so that
\be\label{e.nondegeneracy}
	 \text{for every } x\in \R^3, \text{ there is } v_x \in \R^3
	 \text{ such that } f_{\rm in}(x, \cdot) \geq \delta \1_{B_r(x,v_x)}.
\ee
The reason for~\eqref{e.nondegeneracy} is that, from it, one can obtain a pointwise lower bound for $f$.  This, in turn, yields the local-in-$v$ uniform ellipticity of $\bar a^f$.  This was originally shown in \cite[Theorem~1.3]{HST2018landau}; see also \cite[Lemma~2.5]{HST2019rough} for the connection between the lower bound on $f$ and the ellipticity of $\bar a^f$.

We are now ready to state the second main result.  It is proved in \Cref{s.Landau}.

\begin{Theorem}\label{t.Landau}
	Fix $k, \theta > 0$ and $\alpha \in (0,1)$.  Assume that $f_{\text{in}} \in \logalpha \cap L^{\infty,k}$ and satisfies~\eqref{e.nondegeneracy}.  Let $f \in L^{\infty,k}([0,T]\times \R^6)$ be any solution of~\eqref{e.Landau} constructed in \cite[Theorem 1.2]{HST2019rough} starting from initial data $f_{\rm in}$.
	
	Fix any uniformly continuous function $g \in L^{\infty, 5+\gamma+ \eta}([0,T]\times\R^6)$, where $\eta>0$, such that $g$ solves equation \eqref{e.Landau} weakly (in the sense of~\cite{HST2019rough}) and $g(t,x,v)\rightarrow f_{\text{in}}$ as $t\searrow 0$.
	
	Then, if $k$ is sufficiently large, depending on $\theta$, $\alpha$, and $\gamma$, and
	\be
		\frac{\theta}{2} \frac{\alpha}{2+\alpha} > 1,
	\ee
	there is $T_1 \in (0,T]$, depending only on $f_{\rm in}$, $\alpha$, $\theta$, $k$ and $\gamma$, such that $f=g$ in $[0,T_1]\times \R^6$.  If $k = \infty$ then $T_1 = T$.
\end{Theorem}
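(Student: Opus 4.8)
The plan is to run a weak–strong uniqueness argument in the standard way: set $h = f - g$, show $h$ solves a kinetic Fokker–Planck equation whose coefficients are controlled by the better-regular solution $f$, and then use a Gr\"onwall-type estimate in a well-chosen weighted norm to conclude $h \equiv 0$ on a short time interval $[0,T_1]$. The two essential inputs from earlier in the paper are \Cref{t.Schauder} (to bootstrap regularity of $f$, in particular to get $\bar a^f \in \calpha$ and higher regularity of $f$ itself) and the lower bound coming from~\eqref{e.nondegeneracy} (to guarantee $\bar a^f$ is uniformly elliptic locally in $v$, so that \Cref{t.Schauder} actually applies).

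\textbf{Step 1: Propagation of regularity for $f$.} First I would show that the solution $f$ of~\cite[Theorem~1.2]{HST2019rough} starting from $f_{\rm in} \in \logalpha \cap L^{\infty,k}$ inherits, for short time, a $\logalpha$-type modulus of continuity in $v$ and $C^{\alpha/3}$ in $x$, together with enough decay in $v$ (this is where the moment assumption $k$ large is used). The point is that the logarithmic modulus in $v$ of $f_{\rm in}$ is preserved because the convolution structure of $\bar a^f$ and $\bar c^f$ in~\eqref{e.coefficients} \emph{gains} regularity: even a logarithmic modulus of continuity of $f$ in $v$, combined with decay, yields $\bar a^f, \bar c^f \in C^\alpha_v$ locally. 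Then \eqref{e.nondegeneracy} gives via \cite[Theorem~1.3]{HST2018landau} (and \cite[Lemma~2.5]{HST2019rough}) a pointwise lower bound on $f$, hence the local uniform ellipticity~\eqref{e.ellipticity} for $\bar a^f$. With ellipticity and H\"older-in-$(x,v)$ coefficients in hand, \Cref{t.Schauder} upgrades $f$ to a genuine classical solution with $D_v^2 f \in \calpha$ and $(\partial_t + v\cdot\nabla_x) f \in \calpha$ locally, with the quantitative control in~\eqref{e.w01151}. The delicate accounting here is tracking how the logarithmic loss in $v$ propagates through the Schauder gain of $(2+\alpha)/3$ derivatives --- this is exactly where the hypothesis $\tfrac{\theta}{2}\tfrac{\alpha}{2+\alpha} > 1$ must enter, ensuring the logarithmic modulus is strong enough to close under the $(2+\alpha)/3$-vs-$\alpha/3$ scaling mismatch between $x$ and $v$.

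\textbf{Step 2: The difference equation.} Writing $h = f - g$, subtract the two copies of~\eqref{e.Landau}. Because the equation is quasilinear and nonlocal, the difference does not solve a linear kinetic Fokker–Planck equation with $f$-coefficients alone; one gets
\[
  (\partial_t + v\cdot\nabla_x) h
    = \tr(\bar a^f D_v^2 h) + \bar c^f h
      + \tr(\bar a^{h} D_v^2 g) + \bar c^{h} g.
\]
The first two terms are the ``good'' part; the last two are commutator-type terms that are \emph{linear} in $h$ through the convolution in~\eqref{e.coefficients}, so $\|\bar a^h(t)\|$ and $\|\bar c^h(t)\|$ (in suitable weighted spaces) are bounded by a weighted norm of $h(t)$, and they multiply $D_v^2 g$ and $g$, which are controlled by Step 1 (for $g$, by its assumed regularity as a weak solution --- though here care is needed since $g$ is only assumed uniformly continuous with decay $5+\gamma+\eta$; one may have to run the Schauder bootstrap on $g$ as well, or argue that $g$ inherits regularity on $(0,T]$ from being a weak solution with continuous initial trace).

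\textbf{Step 3: Weighted energy / Gr\"onwall estimate.} Multiply the difference equation by $\langle v\rangle^{2m} h$ for an appropriate moment weight $m$ and integrate in $x,v$. The transport term vanishes; the $\tr(\bar a^f D_v^2 h)$ term, after integration by parts and using local ellipticity of $\bar a^f$, produces a dissipative term $-\int \bar a^f \nabla_v h \cdot \nabla_v h \langle v\rangle^{2m}$ plus lower-order weight-derivative errors controlled by moments of $f$; and the commutator terms are bounded by $\|h(t)\|^2$ times norms of $g$, $D_v^2 g$ (hence by Step 1 a \emph{finite} constant, possibly blowing up like $t^{-\beta}$ as $t\to 0$ --- the short-time interval $T_1$ absorbs this). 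This yields a differential inequality $\frac{d}{dt}\|h(t)\|_{L^2_m}^2 \lesssim (\text{integrable in }t)\,\|h(t)\|_{L^2_m}^2$ on $(0,T_1]$, and since $h(t) \to 0$ as $t\searrow 0$ (both $f$ and $g$ converge to $f_{\rm in}$), Gr\"onwall forces $h \equiv 0$. When $k = \infty$ all the moment bounds are global and uniform, so $T_1 = T$.

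\textbf{Main obstacle.} The crux is Step 1: closing the regularity bootstrap for $f$ when only a \emph{logarithmic} modulus of continuity in $v$ is available at $t=0$. The Schauder estimate~\eqref{e.w01151} is tailored to H\"older data, so one must run it on a dyadic-in-$v$ or time-slice scheme, carefully quantifying how $\logvalpha$ degrades under iteration and verifying that the scaling condition $\tfrac{\theta}{2}\tfrac{\alpha}{2+\alpha}>1$ is precisely what keeps the iteration from losing all regularity in finitely many steps. Managing the interplay between this logarithmic-modulus bookkeeping, the $v$-decay needed to make the nonlocal coefficients H\"older, and the mismatch of domains ($Q_{1/2}$ vs $Q_1$) in~\eqref{e.w01151} is where essentially all the work lies; the difference-equation and Gr\"onwall steps are, by comparison, routine adaptations of the weak–strong scheme in~\cite{HST2019rough}.
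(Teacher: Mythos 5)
Your Step 2 contains the central error. You write the difference equation so that the commutator term is $\tr(\bar a^h D^2_v g) + \bar c^h g$, i.e.\ the Hessian falls on $g$. But $g$ is only assumed uniformly continuous (a weak solution in kinetic $H^1$); $D^2_v g$ is neither bounded nor even defined classically, and there is no nondegeneracy hypothesis on $g$ that would let you run a Schauder bootstrap on it. The whole point of the weak--strong setup is that one must arrange the algebra so the second $v$-derivative falls on the \emph{strong} solution $f$. The correct decomposition is
\[
	\tr(\bar a^f D^2_v f) - \tr(\bar a^g D^2_v g)
		= \tr(\bar a^g D^2_v h) + \tr(\bar a^h D^2_v f),
\]
so the principal part is $\tr(\bar a^g D^2_v\,\cdot)$ (only boundedness of $\bar a^g$ is needed, which follows from $g\in L^{\infty,5+\gamma+\eta}$), and the commutator carries $D^2_v f$, which is exactly what \Cref{l.Hessian} controls. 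With your decomposition, the argument cannot close no matter how the $L^2$ energy estimate is run: integration by parts against $\vv^{2m} h$ still leaves $\nabla_v g$ (and for the commutator $D^2_v g$) floating around with no a priori bound, and ``running the Schauder bootstrap on $g$'' requires both a positivity lower bound for $g$ and H\"older regularity of $\bar a^g$, neither of which is available from the assumed uniform continuity.

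A second genuine gap is your claim in Step 1 that the convolution structure makes $\bar c^f \in C^\alpha_v$. This is false in the physically relevant case $\gamma = -3$, where $\bar c^f = c_\gamma f$ is exactly as irregular as $f$; there is no convolution smoothing at all. Handling this is in fact one of the main technical difficulties addressed by \Cref{p.w02091}: one obtains a bound on $\|D^2_v f(t_0)\|_{L^\infty}$ that still involves $\|D^2_v f\|_{L^\infty([t_0/4,t_0]\times\R^6)}$, with a small prefactor, and the proof has to absorb this term despite the mismatch of time intervals. Simply asserting $\bar c^f \in C^\alpha_v$ erases this entire issue.

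Finally, two more minor points. The paper runs the Gr\"onwall comparison at the level of an $L^\infty$ quantity $W = \tfrac12 \vv^{10} w^2$ with a maximum-principle argument at a touching point (which kills the principal term by sign and avoids integrating by parts against a weak solution), not an $L^2$ energy estimate as you propose; the $L^2$ route is not obviously wrong but would need to confront the integration-by-parts issues above. And your remark that a blow-up like $t^{-\beta}$ ``is absorbed by the short time interval'' is incorrect: if the Gr\"onwall coefficient is not integrable near $t=0$, no choice of $T_1>0$ helps. The condition $\tfrac{\theta}{2}\tfrac{\alpha}{2+\alpha}>1$ exists precisely to guarantee that the coefficient behaves like $t^{-1}(\log(1/t))^{-p}$ with $p>1$, which \emph{is} integrable; it is an integrability threshold, not a bookkeeping constant for a dyadic iteration.
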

We note that the nonstandard continuity space $\log(1/C_v)^{-\theta}$ is defined in \Cref{s.notation} below, as are all of the notational conventions we use.  We also note that the particular type of weak solution plays almost no explicit role in our analysis since we immediately deduce various regularity properties of $g$ from previous results.  Hence, our choice of weak solution is made simply so that it is compatible with the previous results in~\cite{GIMV, HS}.  Roughly, though, $g$ is in an appropriate kinetic $H^1$-space and solves~\eqref{e.Landau} in the sense of integration against other kinetic $H^1$ test functions with compact support.

\subsubsection{The strategy of the proof}
We give a rough outline of the uniqueness argument used to prove \Cref{t.Landau}.  For simplicity, we ignore all complications due to ``weights'' in this discussion, although these are required in the proof due to the fact that $\bar a^f$ is only defined when $f$ decays sufficiently quickly as $|v|\to\infty$.  The proof follows the standard outline -- find an equation for the difference $f-g$ and use a Gronwall-type argument.  This, however, is complicated by the fact that~\eqref{e.Landau} is nonlocal and quasilinear, that is, the highest order coefficient is nonlinear in $f$.  As a result, we require an $L^\infty$ bound on $D^2_v f$.

We obtain such a $W^{2,\infty}_v$-bound by applying a scaled version of \Cref{t.Schauder}.  Were we to only assume that $f \in L^\infty$, such an estimate would degenerate like $1/t$ as $t\to0$ (recall that $f_{\rm in}\notin W_v^{2,\infty}$).  This can be seen easily by scaling arguments.  However, by propagating forward bounds on the $\logalpha$-norm, we are able to, via interpolation, obtain a bound that degenerates like
\be
	\|D^2_v f(t)\|_{L^\infty(\R^6)}
		\lesssim \frac{1}{t \left( \log\frac{1}{t}\right)^{\frac{\theta}{2}{\frac{\alpha}{2+\alpha}}}}.
\ee
Crucially, this is integrable in $t$ near $t=0$, which allows the Gronwall argument to close.

One key step above, scaling the Schauder estimates, was developed in~\cite{HS}.  The other key step above, in which we propagate the $\logalpha$-norm, relies on the general ideas of \cite[Proposition~4.4]{HST2019rough}, in which the $\calpha$-norm was propagated.  It is, however, significantly more complicated in our case.  The reason being that, while $\bar a^f$ is $v$-H\"older continuous, regardless of the regularity of $f$, $\bar c^f$ does not enjoy this property.  In particular, when $\gamma = -3$, which is the physically relevant case, $\bar c^f = c_\gamma f$.  As such, it is exactly as irregular as $f$.

Roughly, we overcome this by obtaining a bound on $\|D^2_v f(t_0)\|_{L^\infty(\R^6)}$ that depends on the $\logalpha$-norm of $f$ as well as on $\|D^2_v f\|_{L^\infty([t_0/4,t_0]\times \R^6)}$.  The appearance of this second term is exactly due to the (potential) irregularity of $\bar c^f$.  In this bound, the coefficient of  $\|D^2_v f\|_{L^\infty([t_0/4,t_0]\times \R^6)}$ is small.  Hence, by a careful argument, we are able to absorb it back into the $W_v^{2,\infty}$-term at $t_0$, despite the difference in time domains.  This step is contained in \Cref{p.w02091}.

The reason \Cref{t.Schauder} is useful in this application is the following.  In \cite{HST2019rough}, $(x,v)$-H\"older regularity of $f$ is propagated from initial $(x,v)$-H\"older regularity.  An additional argument shows that regularity is passed to $t$ as well.  Stated imprecisely, if $f \in \calpha$ then $f \in C^{\alpha/2}_t\calpha$.  The coefficients are then H\"older regular and the full Schauder estimates can be applied.  As we only have $\logalpha$-regularity of $f$, the only regularity that could potential be passed to $t$ is that with a $\log$ modulus.  At best, then, $\bar a^f$ will be H\"older in $(x,v)$, due to the H\"older regularity of $f$ in $x$ and the fact that $\bar a^f$ is defined by convolution with a ``nice'' kernel in $v$, but with only a $\log$ modulus of continuity in $t$.  Thus, the full Schauder estimates could not be applied.

We point out a subtle additional benefit to the application of \Cref{t.Schauder} in place of the Schauder estimates of~\cite{HS}.  When the estimates of~\cite{HS} are applied in~\cite{HST2019rough}, there is a loss of regularity between $\bar a^f$ and $f$ due to how time shifts interact with the appropriate notion of `kinetic distance\footnote{We have largely avoided discussing the kinetic distance since it plays no role in our analysis.  Indeed, without shifts in time, which we need not consider due to our not considering time regularity, the kinetic distance collapses to $\calpha$.   We point the interested reader to a clear discussion of the kinetic distance in~\cite[Section~2.1]{imbert2018schauder}.}'; see \cite[Lemma~2.7]{HST2019rough}.  This is avoided here due to our not considering time shifts.  As such, we achieve {\em sharper estimates} on the various quantities, such as $\|D^2_v f(t)\|_{L^\infty(\R^6)},$ as $t\searrow0$.

\subsubsection{Related work}

As mentioned above, \Cref{t.Landau} supercedes the earlier work~\cite[Theorem~1.4]{HST2019rough}, which required $(x,v)$-H\"older regularity of $f_{\rm in}$ for uniqueness the hold.  
We also mention the work of Anceschi and Zhu in~\cite{Anceschi-Zhu} on a similar model.

To our knowledge, the local well-posedness theory of Landau is relatively unstudied, with more interest directed toward a related kinetic integro-differential equation, the Boltzmann equation.  There, the first local well-posedness results are due to the AMUXY group~\cite{amuxy2010regularizing,amuxy1,amuxy2}.  In particular, a general uniqueness result in an appropriate Sobolev space (of order $2s$, that is, twice the order of the differential operator in the equation) was proven in~\cite{amuxy2011uniqueness} for a restricted class of parameters.

While~\cite{amuxy2011uniqueness} is an extremely nice result, we describe in slightly more detail its limitations in order to highlight the difficulties in our setting.  Their result requires $H^{2s}_v$-regularity of solutions.  The Landau equation essentially corresponds to the $s=1$ case.  Were their result to apply, it would require $H^2_v$-regularity of $f$, which corresponds to $C^{1/2}_v$-regularity and is significantly more than we require here.  Indeed, for reasons related to this, we note that uniqueness is, in some ways, more difficult for Landau than Boltzmann as the differential operator is of higher order.  Additionally, their uniqueness result requires regularity of both solutions in contrast to our result that has only mild conditions on the other potential solution $g$.  On the other hand, their result only requires boundedness in $x$.

The close-to-equilibrium and homogeneous setting for~\eqref{e.Landau} have seen more focus.  This is probably due to the fact that one is often able to establish strong results such as global well-posedness and convergence to equilibrium.   The state-of-the-art, techniques, and types of questions asked in these settings are quite different from those raised in the current manuscript, so we do not go into much detail here.  We simply mention a few landmark results in each case.  The story in the homogeneous setting (that is, when $f$ is independent of $x$) is somewhat complicated by the functional setting one works in, but we mention the works of~\cite{FournierGuerin,Alexandre-Liao-Lin, Wu, Fournier-CMP, Desvillettes-Villani-1, Desvillettes-Villani-2}.  In the close-to-equilibrium setting (that is, when $f_{\rm in}$ is ``close'' to a Maxwellian of the form $\alpha e^{-|v|^2/\beta}$ for some $\alpha,\beta>0$), we refer to~\cite{Guo-2002, Mouhot-Neumann, villani-96, DLSS}.  Both settings are extremely well studied and, as a result, we are only able to reference a small selection of the work completed over the past several decades.  

Outside of these settings, little is known about the global well-posedness of classical solutions.  To our knowledge, the conditional result of~\cite{HST2019rough}, which yields global well-posedness as long as the mass and energy densities remain bounded in $t$ and $x$ in the case $\gamma > -2$ (or, in the case of $\gamma \leq -2$, if certain $L^p$-norms remain bounded), is currently the sharpest condition ruling out ``blow-up.''



\subsection{Two conjectures}

We now formulate two conjectures regarding ways in which the results above might be strengthened.

First, if we trust the analogy discussed above, that $\partial_t + v\cdot\nabla_x$ in the kinetic setting is similar to $\partial_t$ in the parabolic one, we are led to the following conjecture:
\begin{conjecture}
	Fix any $\alpha \in (0,1)$.  Assume that~\eqref{e.ellipticity} holds and that $f$, $D^2_vf$, $\bar a$, $\bar c$, $g \in C_v^\alpha(Q_1)$.  Then
\be
	\begin{split}
		[D^2_v f]_{C^\alpha_v(Q_{1/2})}
		\lesssim
			&\left(1+ [c]_{C^\alpha_v(Q_1)} + [a]_{C^\alpha_v(Q_1)}^{1+ \frac{2}{\alpha}}\right) \|f\|_{L^\infty(Q_1)}
				+ \left(1 + [a]_{C^\alpha_v(Q_1)}\right)
				[g]_{C^\alpha_v(Q_1)}.
	\end{split}
\ee
The implied constant depends only on $d$, $\alpha$, and $\Lambda$.
\end{conjecture}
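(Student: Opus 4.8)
The plan is to reduce the purely $v$-regular (no $x$-regularity) Schauder estimate to \Cref{t.Schauder} via a mollification-and-localization argument in the $x$-variable. The difficulty is that \Cref{t.Schauder} genuinely uses $C_x^{\alpha/3}$ control of the coefficients and produces $C_x^{(2+\alpha)/3}$ regularity of $f$; in the conjecture we only assume $C_v^\alpha$. The natural strategy is therefore a \emph{freezing in $x$} argument that parallels the freezing-in-$t$ philosophy of Brandt and Knerr: fix a base point $x_0$, and on a small kinetic cylinder around $(x_0,v_0)$ treat $\bar a(t,x,v)$, $\bar c(t,x,v)$, $g(t,x,v)$ as perturbations of $\bar a(t,x_0,v)$, $\bar c(t,x_0,v)$, $g(t,x_0,v)$. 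The key obstruction is that the frozen equation $(\partial_t + v\cdot\nabla_x)f = \tr(\bar a(t,x_0,v)D_v^2 f) + \dots$ is \emph{still} a genuine kinetic Fokker-Planck equation, not an ODE in $v$, so the transport term $v\cdot\nabla_x$ cannot simply be ignored; one must instead understand the fundamental solution $\Gamma$ of this frozen operator and show that its moments decay in $x$ at the correct kinetic scaling $|x|\sim t^3$ even though the coefficients are merely $L_x^\infty$.

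The key steps, in order, would be: (1) establish an $(x,v)$-homogeneous estimate, i.e. prove the conjectured bound in the special case $\bar a = \bar a(t)$, $\bar c\equiv 0$, $g$ depending only on $(t,v)$, using the semi-explicit representation of $\Gamma_{\bar a}$ from \Cref{p.gamma_a} and the moment bounds of \Cref{l.w09032}; here one must observe that only $v$-regularity of $g$ enters, and that the $x$-decay of $\Gamma_{\bar a}$ is automatic from its Gaussian-in-phase-space structure, so $[D_v^2 f]_{C_v^\alpha}$ is controlled with \emph{no} $x$-regularity required. (2) Perturb off the $(x,v)$-homogeneous equation by writing $\bar a(t,x,v) = \bar a(t,x_0,v_0) + (\bar a(t,x,v) - \bar a(t,x_0,v_0))$; the perturbation is bounded by $\|\bar a\|_{L_x^\infty C_v^\alpha}|v-v_0|^\alpha$ on the relevant cylinder (after first reducing to $x$ near $x_0$ using finite speed of propagation in the kinetic sense), which is small on a small cylinder. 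This is exactly the mechanism highlighted after \Cref{t.Schauder}, except that here the $x$-smallness is obtained not from $C_x^{\alpha/3}$ regularity but from restricting to a thin cylinder $|x - x_0 - (t_0-t)v_0|\lesssim \rho^3$ on which $\bar a$ varies little \emph{because of the $C_v^\alpha$ bound combined with the kinetic geometry}. (3) Run the standard iteration/summation over dyadic scales to upgrade the frozen estimate at each base point to the full interior estimate on $Q_{1/2}$, tracking the dependence on $[\bar a]$, $[\bar c]$, $[g]$ to recover the stated powers $1 + 2/\alpha$ etc.

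I expect the main obstacle to be precisely step (2): controlling the transport term in the frozen equation without $x$-regularity of $\bar a$. In \Cref{t.Schauder}, the $C_x^{\alpha/3}$ assumption is what lets one bound $|\bar a(t,x,v) - \bar a(t,x_0,v_0)|$ by $|x-x_0|^{\alpha/3} + |v-v_0|^\alpha$; dropping the first term means that on the kinetic cylinder $Q_\rho(z_0)$ the coefficients can oscillate by an $O(1)$ amount in the $x$-direction (a shift of $x$ by $\rho^3$ need not be small in any $x$-Hölder sense if there is none). The resolution — following the analogy with Brandt/Knerr's arrow-of-time difficulty flagged in the introduction — should be that \emph{integrated against $\Gamma_{\bar a}$}, the $x$-oscillation of $\bar a$ does not hurt: $\Gamma_{\bar a}(t,\cdot,v)$ is an $L^1$ kernel in $x$, so $\int |\bar a(t,x,v)-\bar a(t,x_0,v)|\,\Gamma_{\bar a}\,dx \lesssim \|\bar a\|_{L^\infty}$, and the \emph{difference} we actually need is in the $v$-variable, which is where the Hölder modulus lives. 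Making this precise — i.e. showing that the commutator-type error terms arising in the perturbation are genuinely controlled by $[\bar a]_{C_v^\alpha}$ and a small power of $\rho$, with the $x$-dependence only ever appearing under an $L^1$ integral against $\Gamma_{\bar a}$ — is the crux, and it is plausible that it forces either a slightly smaller exponent than $\alpha$ on the left side or an additional logarithmic loss, which may be why this remains stated only as a conjecture.
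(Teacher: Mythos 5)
This statement is labeled as a \emph{conjecture} in the paper; the authors do not prove it, and in fact they explicitly state that they expect it to be difficult to establish ``for reasons related to the fundamental difference between $\partial_t$ in the parabolic setting and $\partial_t + v\cdot\nabla_x$ in the kinetic setting.'' Your write-up is not a proof either---it is an outline that bottoms out in the same open difficulty, as you yourself acknowledge in the final sentence. That part is fine. What I want to flag is that you mislocate where the obstruction first bites, and this is worth correcting because it changes what a correct plan of attack would have to look like.

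Your step (1) claims that the $(x,v)$-homogeneous estimate (coefficients $\bar a = \bar a(t)$, $\bar c\equiv 0$) goes through with only $C^\alpha_v$ regularity of $g$, with ``no $x$-regularity required,'' on the grounds that $\Gamma_{\bar a}$ decays nicely in $x$. This is false, and it is exactly here---not in step (2)---that the argument first breaks. The kinetic convolution
\[
\partial_{v_iv_j}f(z) = \int \partial_{v_iv_j}\Gamma_{\bar a}(\tilde z)\, g\big(t-\tilde t,\ x - \tilde x - \tilde t(v-\tilde v),\ v - \tilde v\big)\, d\tilde z
\]
carries a Galilean shear: the $x$-argument of $g$ contains $-\tilde t(v-\tilde v)$. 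When you compare $D^2_v f$ at two points with the same $(t,x)$ and $v\neq v'$, the resulting difference quotient forces a comparison of $g$ at $x$-arguments offset by terms proportional to $\tilde t\,\tilde v$ and $\tilde t(v-v')$. The paper's own homogeneous estimate (\Cref{l.w12041}) bounds these precisely by the $C^{\alpha/3}_x$ modulus of $g$---see the bound on $|\delta g(\tilde z)-\delta g(\tilde z_0)|$ in the $I_1$ estimate, which produces the factor $(\tilde t|\tilde v|)^{\alpha/3}$, and the analogous $x$-shift in the $I_2$ estimate. If $g$ is only $L^\infty_x C^\alpha_v$, those terms are size $O(\|g\|_{L^\infty})$ with no smallness, and the dyadic splitting at scale $h^2$ does not close. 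So the transport shear contaminates even the ``frozen-$x$'' model problem; the $L^1(dx)$ integrability of $\Gamma_{\bar a}$ does not help because what you need to estimate is a \emph{difference} in the $v$-direction of an $x$-sheared composition, not an $x$-average.

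Put differently: the obstruction is not confined to the perturbation $\bar a(t,x,v)-\bar a(t,x_0,v_0)$ in your step (2). It already shows up for constant-in-$(x,v)$ coefficients because the representation formula itself couples $v$-increments in the base point to $x$-increments in the argument of $g$, via the transport operator. This is the kinetic analogue of the ``arrow of time'' difficulty from Brandt--Knerr that the authors flag in \Cref{s.further_Schauder}: in the parabolic case a $t$-shift has a preferred direction one can exploit via barriers, but in the kinetic case there is no analogous one-sided structure for the $(x,v)$-shear. A genuinely new idea is needed to handle this---which is why the paper leaves it as a conjecture rather than a theorem.
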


Notice that the conjectured result above does not require any $x$-regularity.  It seems that a uniqueness result for the Landau equation is an immediate consequence of this.  We state this roughly here:
\begin{conjecture}
	In the setting of \Cref{t.Landau}, although assuming only that $f_{\rm in} \in \log(1/C_v)^{-\theta} \cap L^{\infty,k}$ (that is, we drop the H\"older regularity in $x$), the same weak-strong uniqueness conclusion holds as long as
	\be
		\theta > 2.
	\ee
\end{conjecture}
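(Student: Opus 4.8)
Conditionally on the first conjecture --- which would play the role \Cref{t.Schauder} plays in the proof of \Cref{t.Landau} --- the plan is to run the argument for \Cref{t.Landau} essentially verbatim, deleting every appearance of the spatial H\"older seminorm. I would start as in \Cref{s.Landau}: derive the equation for the difference $h=f-g$ and set up the weighted Gronwall estimate. The only genuinely nonlinear input needed --- because \eqref{e.Landau} is quasilinear and nonlocal --- is a bound on $\|D^2_v f(t)\|_{L^\infty(\R^6)}$ integrable on $(0,T_1)$, and the point is that this bound can now be produced from $v$-regularity alone. As in \Cref{t.Landau}, the non-degeneracy hypothesis~\eqref{e.nondegeneracy} is still invoked, unchanged, to give a pointwise lower bound on $f$ and hence local-in-$v$ ellipticity of $\bar a^f$, and the weighted norms (needed because $\bar a^f$ requires decay of $f$) are carried along as before using the $L^{\infty,k}$ hypothesis; the constructed solution $f$ of \cite[Theorem~1.2]{HST2019rough} has enough a priori regularity for $t>0$, and the decay encoded in $L^{\infty,k}$, regardless of the regularity of $f_{\rm in}$, so nothing is lost by dropping the $C_x^{\alpha/3}$ requirement on the initial datum.

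To obtain the $W^{2,\infty}_v$-bound I would use the scaled Schauder argument of \cite{HS}, now invoking the first conjecture: fixing $t_0$, one centers the kinetic scaling at a point $(t_0,x_0,v_0)$, rescales at radius $r\asymp\sqrt{t_0}$, subtracts the constant $f(t_0,x_0,v_0)$, and applies the rescaled conjectured bound. Since that bound asks for no control of $f$ in $x$, the only modulus one must feed in is the $v$-modulus of $f$; propagating the $\logvalpha$-modulus forward (see below) and interpolating it against $[D^2_v f]_{C^\alpha_v}$ should give a bound of the schematic shape
\be
	\|D^2_v f(t)\|_{L^\infty(\R^6)}
		\lesssim \frac{1}{t\,\big(\log\tfrac1t\big)^{\theta/2}},
\ee
which is integrable near $t=0$ precisely when $\theta/2>1$. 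The improvement of the exponent from $\frac\theta2\frac{\alpha}{2+\alpha}$ (as in \Cref{t.Landau}) to $\frac\theta2$, and hence of the condition to $\theta>2$, comes from the absence of the $x$-direction in both the conjectured Schauder estimate and the interpolation; $\alpha\in(0,1)$ then survives only as an auxiliary parameter in the internal estimates, not in the hypotheses or in the final condition.

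The step that genuinely needs reworking, even granting the first conjecture, is the propagation of the $v$-only modulus of continuity --- the analogue of \cite[Proposition~4.4]{HST2019rough} and of \Cref{p.w02091}. The obstruction is the one already met in the proof of \Cref{t.Landau}: $\bar c^f$ is not, in general, H\"older in $v$, and for $\gamma=-3$ one has $\bar c^f=c_\gamma f$, so $\bar c^f f$ cannot simply be moved into the forcing of the conjectured estimate, which demands a $C^\alpha_v$ right-hand side. I would handle this exactly as in \Cref{p.w02091}: on the slab $[t_0/4,t_0]$ one already controls $\|D^2_v f\|_{L^\infty}$, hence $[f]_{C^\alpha_v}$ and therefore $[\bar c^f]_{C^\alpha_v}$, in terms of $\|D^2_v f\|_{L^\infty([t_0/4,t_0]\times\R^6)}$; feeding this into the rescaled estimate yields a bound on $\|D^2_v f(t_0)\|_{L^\infty}$ in which $\|D^2_v f\|_{L^\infty([t_0/4,t_0]\times\R^6)}$ enters with a coefficient that is small, thanks to the $r^\alpha\asymp t_0^{\alpha/2}$ scaling gain, so that it can be absorbed despite the mismatch of time domains. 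One should check that no intermediate estimate in \cite[Proposition~4.4]{HST2019rough} or \Cref{p.w02091} covertly uses the $C_x^{\alpha/3}$ part of the norm; from the discussion preceding \Cref{t.Landau}, the $x$-regularity there appears to be a matter of bookkeeping and of compatibility with \cite{GIMV, HS} rather than essential, so this should go through, but it is the place where care is needed.

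The genuine obstacle is the first conjecture itself. The method of this paper --- a semi-explicit fundamental solution for the $(x,v)$-homogeneous equation (\Cref{p.gamma_a}), moment bounds for it (\Cref{l.w09032}), and then perturbation --- leaves, after the perturbation step, a H\"older seminorm of $\bar a$ in \emph{both} $x$ and $v$ on the right-hand side; removing the $x$-part seems to demand a different homogenization, one in which $\bar a$ is frozen only in $v$ --- so that the model problem has $\bar a=\bar a(t,x)$ --- and in which the transport operator $\partial_t+v\cdot\nabla_x$ is exploited to recover the missing $x$-regularity of $D^2_v f$ from its $v$-regularity; or, alternatively, a kinetic substitute for Brandt's barrier argument \cite{Brandt} adapted to the directionless ``time'' $\partial_t+v\cdot\nabla_x$ (cf.~\Cref{s.further_Schauder}). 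That is precisely the content of the first conjecture, and essentially all of the difficulty lives there; once it is in hand, the passage to uniqueness sketched above should be routine, modulo the care flagged in the previous paragraph.
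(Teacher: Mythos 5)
This statement is one of the two open conjectures (Conjecture 1.6) in the paper, not a theorem: the authors offer no proof, remarking only that a uniqueness result ``seems'' to be ``an immediate consequence'' of Conjecture 1.5, and explicitly hedging that ``it is not clear that the above conjecture, were it true, would be sharp.'' So there is no paper proof to compare against; your proposal is a sketch of a hypothetical argument, and at the structural level it matches the authors' implicit plan: grant the $v$-only Schauder estimate, rerun the scaled-Schauder / $\log$-modulus-propagation / Gronwall machinery of \Cref{s.Landau} without the $C^{\alpha/3}_x$ bookkeeping, and absorb the irregular $\bar c^f$ contribution via the small-coefficient, time-slab absorption of \Cref{p.w02091}. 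You are also right that essentially all of the genuine difficulty sits in Conjecture 1.5.

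One step in the sketch does not withstand scrutiny as written, and it is precisely the step that is supposed to produce the threshold $\theta > 2$. You claim the improvement of the exponent from $\tfrac{\theta}{2}\tfrac{\alpha}{2+\alpha}$ to $\tfrac{\theta}{2}$ ``comes from the absence of the $x$-direction in both the conjectured Schauder estimate and the interpolation,'' with $\alpha$ surviving only as an internal parameter. But in the paper's chain of estimates the factor $\tfrac{\alpha}{2+\alpha}$ is produced by \Cref{l.w01081}, the $\log$-H\"older interpolation \emph{in $v$ alone}: it balances $\eps^{-2}\log(1/\eps)^{\theta'}[u]_{\log(1/C_v)^{-\theta'}}$ against $\eps^{\alpha}[D^2_v u]_{C^\alpha_v}$, and $2+\alpha$ in the denominator is the sum of the two powers of $\eps$ in that trade. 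Nothing about the $x$-direction enters. Since Conjecture~1.5 still only provides $[D^2_v f]_{C^\alpha_v}$ on the left-hand side, the passage from that seminorm to $\|D^2_v f\|_{L^\infty}$ would still go through the same interpolation, and the same scheme would still give the condition $\tfrac{\theta}{2}\tfrac{\alpha}{2+\alpha}>1$, i.e.\ $\theta > \tfrac{2(2+\alpha)}{\alpha} > 6$ for $\alpha\in(0,1)$, not $\theta > 2$. Reaching $\theta>2$ would require something genuinely new here: for instance a Schauder-type estimate whose right-hand side is itself measured in a $\log$-modulus (avoiding the $C^\alpha_v$ intermediary and the $\eps^{2+\alpha}$ balancing), or a sharper use of the kinetic rescaling than the Young's-inequality split in \Cref{l.logH_scaling}. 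The authors' own caveat about sharpness is, in effect, an acknowledgement of this gap, and your proposal should flag it as a second genuine obstacle alongside Conjecture~1.5 rather than as a bookkeeping simplification.
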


It is not clear that the above conjecture, were it true, would be sharp.  There is a strong connection between regularity and uniqueness results. Indeed, recent work has established the nonuniqueness of irregular (weak) solutions of fluid equations, see, e.g.,~\cite{Camillo,Buckmaster-Vicol}.  We also note the work of Kiselev, Nazarov, and Shterenberg, who, in the critical case of the fractal Burgers equation studied in~\cite{KiselevNazarovShterenberg}, see a situation similar to that of the Landau equation: rough solutions immediately become smooth but uniqueness is unknown without further regularity assumptions.  In fact, despite the intense interest in~\cite{KiselevNazarovShterenberg}, uniqueness of these rough solutions remains open as far as we know.

On the other hand, in the homogeneous ($x$-independent) case for the Landau equation, where the Landau equation has more structure, uniqueness has been established through a probabilistic approach that yields bounds on the Wasserstein distance between two solutions~\cite{FournierGuerin,Fournier-CMP}.  This result requires essentially no regularity of $f$, although it is only applicable in the homogeneous case.

We expect the conjectures above to be difficult to establish for reasons related to the fundamental difference between $\partial_t$ in the parabolic setting and $\partial_t + v\cdot\nabla_x$ in the kinetic setting that were discussed in \Cref{s.further_Schauder}.

\subsection{Notation and continuity spaces}\label{s.notation}

\subsubsection{Points and kinetic cylinders}

%
For succinctness, we often write
\be
	z = (t,x,v),
	\quad
	\tilde z = (\tilde t, \tilde x, \tilde v),
		\quad\text{and}\quad
	z' = (t',x',v').
\ee
For any $r>0$, we let
\be
	Q_r
		= (-r^2, 0] \times B_{r^3} \times B_r,
\ee
where we use the convention that if the base point of a ball is not stated then it is the origin; that is
\be
	B_r = B_r(0).
\ee
The reason for the choice of $Q_r$ is the natural scaling $(t,x,v) \mapsto (r^2 t, r^3 x, rv)$ associated to~\eqref{e.kfp}.

\subsubsection{Continuity spaces}

Throughout we work with some inhomogeneous continuity spaces, i.e., those in which different `amounts' of regularity are required in each variable.  In particular, for a any set $Q \subset \R\times \R^d \times \R^d$ and parameters $\alpha_1,\alpha_2 \in (0,1]$, we let 
\be
	C^{\alpha_1}_x C^{\alpha_2}_v(Q)
		:= \{f : Q \to \R: f\in L^\infty(Q), [f]_{C^{\alpha_1}_x C^{\alpha_2}_v(Q)} < \infty\},
\ee
where
%
%
%
%
%
\be
	[f]_{C^{\alpha_1}_x C^{\alpha_2}_v(Q)}
		:=
		\sup_{\substack{(t,x,v) \neq (t,x',v') \in Q,\\ |x-x'|, |v-v'| < 1/2}} \frac{|f(t,x,v) - f(t',x',v')|}{|x- x'|^{\alpha_1} + |v- v'|^{\alpha_2}}.
\ee



Finally, for the uniqueness result for the Landau equation, we define a space of functions whose modulus of continuity is logarithmic.  Indeed, for $Q \subset \R\times \R^d \times \R^d$ and parameters $\alpha \in (0,1)$ and $\theta > 0$, we let
\be
	\logalpha(Q)
		:= \{f : Q \to \R: f\in L^\infty(Q), [f]_{\logalpha(Q)} < \infty\},
\ee
where
\be\label{e.logalpha}
	[f]_{\logalpha(Q)}
		= \sup_{\substack{(t,x,v) \neq (t,x',v') \in Q,\\ |x-x'|, |v-v'| < 1/2}} \frac{|f(t,x,v) - f(t,x',v')|}{|x- x'|^{\alpha/3} + \log(1/|v- v'|)^{-\theta}}.
\ee
Abusing notation, we also use the $\logalpha$ notation for functions $f$ that are independent of $t$ but for which the supremum in~\eqref{e.logalpha}, without the $t$ terms, is finite.

When $Q$ is not specified in the norms above, it is taken to be either $\R^6$ or $\R_+ \times \R^6$, depending on the setting.  For example, if $f: \R^6 \to \R$, then we say $f\in \calpha$ to mean $f\in \calpha(\R^6)$.  

\subsubsection{Multi-indices}

Given a multi-index $\alpha \in (\N\cup\{0\})^d$, we write
\be
	\partial_v^\alpha
		= \partial_{v_1}^{\alpha_1}\cdots \partial_{v_d}^{\alpha_d}.
\ee
The object $\partial_x^\alpha$ is defined analogously.

%
%
%
%

\subsubsection{Other notation}\label{sec:notation}

Throughout the work, constants are assumed to change line-by-line and depend on various parameters such as the dimension $d$, the ellipticity constant $\Lambda$, and the regularity parameter $\alpha$.  In the statement of each result, we make clear the dependencies and in its proof, we simply write $A \lesssim B$ when $A \leq C B$, where $C$ is a constant depending on those parameters.  We use $A\approx B$ when $A\lesssim B$ and $B\lesssim A$.

In the uniqueness result for the Landau equation, we must work with weighted spaces.  To this end, we recall the Japanese bracket: for any $v \in \R^d$,
\be
	\vv = \sqrt{1 + |v|^2}.
\ee
Then we define the associated weighted $L^\infty$-spaces: for any $n$,
\be
	L^{\infty,n}
		:= \{f : \vv^n f \in L^\infty\}
	\qquad\text{with norm }
	\|f\|_{L^{\infty,n}}
		:= \| \langle\cdot\rangle^n f\|_{L^\infty}.
\ee

\section{The Schauder estimates}\label{s.Schauder}

In this section, we prove our first main result \Cref{t.Schauder}, which is the Schauder estimates for~\eqref{e.kfp}.  As usual, the proof proceeds in two steps.  The first step is an estimate for solutions of a relevant `homogenous' equation.  For us, this `homogeneous' equation is the one where the coefficients depend only on time $t$.  The second step (\Cref{s.w05171}) is to bootstrap to the general case by perturbing off of this `homogeneous' equation.

\subsection{The first step: Schauder estimates for the $(x,v)$-homogeneous problem}

Consider the basic kinetic Fokker-Planck equation involving only transport in $x$ and diffusion in $v$ which the diffusion has a coefficient depending only on $t$:
\begin{equation}\label{e.homogeneous_kfp}
	\begin{split}
		(\partial_{t} + v\cdot \nabla_{x})f
		=
		\tr(\bar a(t)D^{2}_vf)
		+ g.
	\end{split}
\end{equation}
Our assumption on $\bar a$ is the following: there is $\Lambda\geq 1$ such that
\be\label{e.homogeneous_assumptions}
		\bar a: \R \to \R^3\times\R^3
		\text{ is measurable and }
		\quad
		\frac{1}{\Lambda} \id
			\leq \bar a(t)
			\leq \Lambda \id
			\quad\text{ for all $t$}.
\ee
We stress that $\bar a$ does not satisfy any further regularity assumptions.

We begin by studying the fundamental solution of this problem; that is, the function $\Gamma_{\bar a}$ for which the solution $f$ of~\eqref{e.homogeneous_kfp} is given by 
\be\label{e.kernel}
	\begin{split}
	f(z)
		= 
			\int_\R \int_{\R^d} \int_{\R^d}
				\Gamma_{\bar a}(t, x- \tilde x - (t-\tilde t)\tilde v, v-\tilde v; \tilde t)g(\tilde t,\tilde x,\tilde v)\, d\tilde z.
	\end{split}
\ee
In the simple case $\bar a \equiv \id$, it is well-known~\cite{Hormander} that $\Gamma_{\id}$ is given by
\be\label{e.gamma_id}
	\Gamma_{\id}(t,x,v)
		= \begin{cases}
			\left(\frac{\sqrt 3}{2 \pi t^2}\right)^d
				\exp\Big\{
					- \frac{3|x-vt/2|^2}{t^3}
					- \frac{|v|^2}{4t}
				\Big\}
					\qquad&\text{ if } t>0,\\
			0
				\qquad&\text{ if } t\leq 0.
		\end{cases}
\ee
We point out two features: (1) integrating $\Gamma_{\id}$ in $x$ recovers the standard heat kernel, and (2) the `kinetic convolution' involved in~\eqref{e.kernel} respects the Galilean invariance induced by the transport operator.

By a somewhat complicated, but nonetheless straightforward, Fourier transform-based computation we can compute the fundamental solution associated to a general $(x,v)$-independent $\bar a$.  Indeed, we find the following:
\begin{Proposition}\label{p.gamma_a}
Under the assumption~\eqref{e.gamma_id}, solutions of~\eqref{e.homogeneous_kfp} are given by~\eqref{e.kernel} with the fundamental solution
\be\label{e.gamma_a}
	\Gamma_{\bar a}(t,x,v;s)
		=
		\begin{cases}
		\frac{\exp\Big\{-\frac{v\cdot A_0(t;s)^{-1}v}{4}
			-
				(x - M(t;s) v)\cdot  P(t;s)^{-1} (x - M(t;s) v)
			\Big\}}{(4\pi)^d\sqrt{\det{A_0(t;s)P(t;s)}}}
			\qquad&\text{ if } t >s,\\
		0
			\qquad&\text{ if } t\leq s,
		\end{cases}
\ee
	where
\begin{equation}\label{e.A_i}
	\begin{split}
		&A_i(t;s)=\int_{s}^{t} (r-s)^i \bar a(r)\,dr,
			\qquad
		 \text{for}~i=0, 1, 2,
		 \\&
		P(t;s)
		=
		A_{2}(t;s)-A_{1}(t;s)A_0(t;s)^{-1}A_{1}(t;s),
		\qquad\text{and}
	\\&
		M(t;s)
			= (t-s)\id - A_0(t;s)^{-1} A_1(t;s).
	\end{split}
\end{equation}
\end{Proposition}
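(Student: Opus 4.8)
The plan is to compute $\Gamma_{\bar a}$ via the Fourier transform in the $(x,v)$ variables, which turns~\eqref{e.homogeneous_kfp} into a first-order linear transport equation solvable explicitly along characteristics, and then to invert. (One could instead verify~\eqref{e.gamma_a} directly in~\eqref{e.homogeneous_kfp} using $\dot A_i(t) = t^i a(t)$, but the Fourier route is cleaner and more transparent.) Write $\hat f(t,\xi,\eta) = \int_{\R^d}\int_{\R^d} f(t,x,v)\,e^{-i(\xi\cdot x + \eta\cdot v)}\dd x\dd v$. Using $\nabla_x \mapsto i\xi$, multiplication by $v\mapsto i\nabla_\eta$, and $D^2_v\mapsto -\eta\otimes\eta$, one gets $v\cdot\nabla_x f \mapsto -\xi\cdot\nabla_\eta\hat f$ and $\tr(a(t)D^2_v f)\mapsto -(\eta\cdot a(t)\eta)\hat f$, so~\eqref{e.homogeneous_kfp} becomes
\be
	\partial_t \hat f - \xi\cdot\nabla_\eta \hat f + \big(\eta\cdot a(t)\,\eta\big)\,\hat f = \hat g .
\ee
In particular, the Fourier transform $\hat\Gamma_{\bar a}(t,\xi,\eta)$ of the fundamental solution solves this with $\hat g\equiv 0$ and $\hat\Gamma_{\bar a}(0,\cdot,\cdot)\equiv 1$.

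First I would solve this equation. For frozen $\xi$ it is a transport equation along the affine characteristics $s\mapsto \eta + (t-s)\xi$, and — since $a$ is merely measurable in $t$, so the characteristic ODE is understood in the Carathéodory sense — the integrating-factor formula gives
\be
	\hat\Gamma_{\bar a}(t,\xi,\eta)
		= \exp\Big\{ -\int_0^t \big(\eta + (t-s)\xi\big)\cdot a(s)\,\big(\eta + (t-s)\xi\big)\dd s\Big\}
		=: e^{-Q_t(\xi,\eta)} .
\ee
Expanding the integrand and using the definitions of $A_0,A_1,A_2$ in~\eqref{e.A_i},
\be
	Q_t(\xi,\eta) = \eta\cdot A_0\,\eta + 2\,\eta\cdot(tA_0 - A_1)\,\xi + \xi\cdot(t^2 A_0 - 2tA_1 + A_2)\,\xi ,
\ee
and completing the square in $\eta$ — equivalently, taking the Schur complement of the $A_0$-block of the symmetric matrix $S_t$ associated to $Q_t$ — collapses this to
\be
	Q_t(\xi,\eta) = \big(\eta + M(t)\xi\big)\cdot A_0(t)\,\big(\eta + M(t)\xi\big) + \xi\cdot P(t)\,\xi ,
\ee
with $M(t) = t\id - A_0^{-1}A_1$ and $P(t) = A_2 - A_1 A_0^{-1}A_1$ exactly as in~\eqref{e.A_i}; the same block computation gives $\det S_t = \det(A_0(t))\det(P(t))$.

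The key point, and the step I expect to be the principal obstacle, is to show that $Q_t$ is positive definite for every $t>0$: this is what makes $e^{-Q_t}$ a genuine (rescaled) Gaussian, forces $\Gamma_{\bar a}(t,\cdot,\cdot)$ to be integrable, and justifies the inversion — it is precisely the non-obvious integrability flagged after the statement. Performing the unimodular change of variables $\zeta_1 = \eta + t\xi$, $\zeta_2 = -\xi$ turns $Q_t$ into $\int_0^t (\zeta_1 + s\zeta_2)\cdot a(s)(\zeta_1 + s\zeta_2)\dd s$, which by~\eqref{e.homogeneous_assumptions} is bounded below by $\Lambda^{-1}\int_0^t |\zeta_1 + s\zeta_2|^2\dd s$; the latter is a positive definite quadratic form for each $t>0$, since $\int_0^t|\zeta_1 + s\zeta_2|^2\dd s = 0$ forces the affine map $s\mapsto\zeta_1+s\zeta_2$ to vanish, i.e.\ $\zeta_1=\zeta_2=0$, and one reads off quantitative ellipticity together with the (anisotropic) degeneration rates as $t\to 0$ at the same time. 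In particular $A_0(t),P(t)>0$. With this in hand I would invert the Fourier transform as two successive $d$-dimensional Gaussian integrals: first in $\eta$ (covariance $\propto A_0(t)^{-1}$, which also produces the linear-in-$v$ phase shifting the next integral) and then in $\xi$ against $e^{i\xi\cdot x}$ (covariance $\propto P(t)^{-1}$), using that $a(t)$, hence $A_0(t), P(t)$, are symmetric; collecting the normalizing constants, $(2\pi)^{-2d}\pi^{d} = (4\pi)^{-d}$, gives~\eqref{e.gamma_a}. The conventions can be sanity-checked by specializing to $a\equiv\id$, where $A_0 = t\id$, $M = \tfrac{t}{2}\id$, $P = \tfrac{t^3}{12}\id$ recover~\eqref{e.gamma_id}.

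Finally, $\Gamma_{\bar a}$ being the fundamental solution, the representation~\eqref{e.kernel} follows from Duhamel's principle together with the convolution structure dictated by the transport group $\circ$; the only care needed is that the operator is non-autonomous, so that the lower limits in the $A_i$'s are referenced to the initial time of the solution, which is routine bookkeeping. In summary, the derivation of $\hat\Gamma_{\bar a}$ and the linear algebra producing $A_0, P, M$ are essentially mechanical, the Fourier steps need only the standard justification valid for $L^\infty_t$ coefficients, and the substance of the proof is the positive-definiteness of $Q_t$, which simultaneously supplies the Gaussian normalization, the integrability of $\Gamma_{\bar a}$, and the quantitative moment bounds needed downstream (cf.\ \Cref{l.w09032}).
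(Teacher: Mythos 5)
Your proposal follows the same Fourier-transform route as the paper: transform in $(x,v)$, solve the resulting first-order transport equation along the characteristics $\eta\mapsto\eta+(t-s)\xi$, expand the exponent to identify $A_0,A_1,A_2$, complete the square (Schur complement) to produce $M$ and $P$, and invert via two Gaussian integrals. All of this matches the paper's proof of \Cref{p.gamma_a}, so at the level of this proposition the argument is essentially the same.

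Where you genuinely deviate is in treating positive-definiteness of $Q_t$ as part of the proof. The paper's proof of \Cref{p.gamma_a} is a purely formal Fourier computation, and the quantitative lower bound $w\cdot P(t)w\gtrsim t^3|w|^2$ is deferred to \Cref{l.w09032}, where it is obtained by a rather long three-step dynamical argument tracking $M(s)w$. Your change of variables $\zeta_1=\eta+t\xi$, $\zeta_2=-\xi$ is an observation the paper does not make: it rewrites $Q_t$ as $\int_0^t(\zeta_1+s\zeta_2)\cdot a(s)(\zeta_1+s\zeta_2)\,ds$, and the ellipticity bound~\eqref{e.homogeneous_assumptions} immediately gives $Q_t\geq\Lambda^{-1}\int_0^t|\zeta_1+s\zeta_2|^2\,ds$, the $a\equiv\id$ form. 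Pushed one step further this gives a substantially shorter proof of \Cref{l.w09032}(i): since $\xi\cdot P\xi=\min_\eta Q_t(\xi,\eta)$ and $\min_\eta\int_0^t|\eta+(t-s)\xi|^2\,ds=\tfrac{t^3}{12}|\xi|^2$, one reads off $\xi\cdot P\xi\geq\tfrac{t^3}{12\Lambda}|\xi|^2$ directly. You assert the quantitative consequence ("one reads off quantitative ellipticity\ldots") without carrying out the minimization over $\eta$; it would be worth writing that one line, since it bypasses the most delicate step in the paper's Section~2.

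One small bookkeeping remark: the exponent should carry $\tfrac14(x-Mv)\cdot P^{-1}(x-Mv)$ (this is what the paper's own computation produces, and what the $a\equiv\id$ specialization with $P=\tfrac{t^3}{12}\id$, $M=\tfrac t2\id$ requires to match the $\tfrac{3}{t^3}$ in~\eqref{e.gamma_id}), whereas the displayed statement of \Cref{p.gamma_a} omits the $\tfrac14$. Your accounting of the normalizing constant $(2\pi)^{-2d}\pi^d=(4\pi)^{-d}$ is consistent with including the $\tfrac14$.
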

\noindent We postpone the proof of \Cref{p.gamma_a} to \Cref{appendix}.

It is not obvious that~\eqref{e.kernel} is well-defined from~\eqref{e.gamma_a}.  Indeed, while the positive-definiteness of $A_0$ and its having the same scaling in time as the analogous term in~\eqref{e.gamma_id} are immediately obvious, the same cannot be said for $P(t)$.  In fact, even the positive definiteness of $P$ is not clear.  However, we need a stronger estimate than merely positive definiteness of $P$ as the crucial step in most proofs of the Schauder estimates is in understanding the scaling in $t$ of $\Gamma_{\bar a}$ and its integrals in $x$ and $v$.

We now state this scaling property.  Notice that it is, up to constants, the same as one would obtain using $\Gamma_{\id}$ defined in~\eqref{e.gamma_id}.  Its proof is contained in~\Cref{s.kernel_scaling}.
\begin{Lemma}\label{l.w09031}
	Let $\Gamma_{\bar a}$ be as in~\eqref{e.gamma_a}, with $\bar a$ under the assumptions given by~\eqref{e.homogeneous_assumptions}. 
	Fix any multi-indices $\alpha, \beta \in (\N\cup\{0\})^d$, any natural number $j\geq 0$, and any real numbers $r,s \geq 0$.
	For $t > \tilde t$,
\begin{equation}\label{e.w09161}
	\begin{split}
		\int_{\R^d}\int_{\R^d}
			\max_{(0,\xi_2,\xi_3) \in Q_{(t-\tilde t)/2}}
			|\partial^{j}_{t}\partial_{x}^{\beta}\partial_{v}^{\alpha}\Gamma_{\bar a}(t,x+\xi_2,v+\xi_3; \tilde t)|x|^{r}|v|^{s}\,dxdv
		\lesssim
		(t-\tilde t)^{-\frac{2j+|\alpha|+3|\beta|}{2}+\frac{3r+s}{2}}
	.
	\end{split}
\end{equation}
\end{Lemma}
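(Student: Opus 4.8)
The plan is to reduce the whole estimate to the scaling of the explicit Gaussian in \eqref{e.gamma_a} via a linear change of variables that removes the shear $x\mapsto x-M(t)v$ and rescales $(x,v)\mapsto(t^{3/2}u,\,t^{1/2}w)$. Everything hinges on three matrix estimates, valid for $t>0$ with constants depending only on $d$ and $\Lambda$:
\begin{equation*}
	\tfrac{1}{\Lambda}\,t\,\id \le A_0(t) \le \Lambda\,t\,\id,
	\qquad
	\|M(t)\| \lesssim t,
	\qquad
	\tfrac{1}{C}\,t^3\,\id \le P(t) \le C\,t^3\,\id.
\end{equation*}
The first is immediate from \eqref{e.homogeneous_assumptions}; the second follows from $\|A_1(t)\|\le\Lambda t^2/2$ and $\|A_0(t)^{-1}\|\le\Lambda/t$, so $\|M(t)\|\le t+\|A_0(t)^{-1}A_1(t)\|\lesssim t$. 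The bound on $P(t)$ is the heart of the matter — a priori it is not even clear that $\Gamma_{\bar a}$ is integrable — and is the content of \Cref{l.w09032}. I would obtain it from the variational identity
\begin{equation*}
	x^{\top} P(t)\, x = \min_{y\in\R^d}\int_0^t (y+sx)^{\top} a(s)\,(y+sx)\,ds,
\end{equation*}
which holds because $P(t)=A_2(t)-A_1(t)A_0(t)^{-1}A_1(t)$ is the Schur complement of the block $A_0(t)$ in the symmetric block matrix with blocks $A_{i+j}(t)$, $i,j\in\{0,1\}$, whose quadratic form at $(y,x)$ equals $\int_0^t (y+sx)^{\top}a(s)(y+sx)\,ds$ (hence positive definite for $t>0$). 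Inserting $\tfrac1\Lambda\,\id\le a(s)\le\Lambda\,\id$, taking $y=-\tfrac t2 x$ for the upper bound, and using $\min_y\int_0^t|y+sx|^2\,ds=|x|^2\int_0^t(s-\tfrac t2)^2\,ds=\tfrac{t^3}{12}|x|^2$ for the lower bound gives $P(t)\approx t^3\,\id$.

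Granting these, I would note that it suffices to treat $t\le1$ (the regime relevant to the Schauder proof), in which case the shifts in $Q_{t/2}$ satisfy $|\xi_2|\le t^3/8\le t^{3/2}/8$ and $|\xi_3|\le t/2\le t^{1/2}/2$, i.e.\ they lie within the natural scales $|x|\sim t^{3/2}$, $|v|\sim t^{1/2}$ of the Gaussian. Substituting $v=t^{1/2}w$, $x=t^{3/2}u+M(t)v$, the exponent in \eqref{e.gamma_a} becomes $-q_t(u,w)$, where, since $tA_0(t)^{-1}\approx\id$ and $t^3P(t)^{-1}\approx\id$, $q_t$ is a positive definite quadratic form with eigenvalues bounded above and below by constants depending only on $d$ and $\Lambda$; at the same time the prefactor times $dx\,dv$ becomes a constant $\approx1$ times $e^{-q_t(u,w)}\,du\,dw$ (the Jacobian $t^{2d}$ exactly cancels the $t^{-2d}$ coming from $\det(A_0P)^{-1/2}$). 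Under this substitution $\partial_x^{\beta}=t^{-3|\beta|/2}\partial_u^{\beta}$, and since $\partial u/\partial v=-t^{-3/2}M(t)$ has norm $\lesssim t^{-1/2}$ while $\partial w/\partial v=t^{-1/2}\id$, one has $\partial_v^{\alpha}=t^{-|\alpha|/2}D_\alpha$ with $D_\alpha$ a differential operator of order $|\alpha|$ in $(u,w)$ whose coefficients are bounded in terms of $d,\Lambda,|\alpha|$; likewise $|v|^s=t^{s/2}|w|^s$ and $|x|^r\lesssim t^{3r/2}(|u|^r+|w|^r)$. Hence $\partial_x^{\beta}\partial_v^{\alpha}\Gamma_{\bar a}$ equals $\approx t^{-2d}\,t^{-(3|\beta|+|\alpha|)/2}$ times a polynomial of degree $\le|\alpha|+|\beta|$ in $(u,w)$ times $e^{-q_t}$, and integrating the weighted expression (Jacobian $t^{2d}$) yields $\lesssim t^{-(3|\beta|+|\alpha|)/2+(3r+s)/2}$, since $\int_{\R^{2d}}(\text{polynomial})\,e^{-q_t}\,du\,dw$ is finite uniformly in $t\le1$. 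This is \eqref{e.w09161} for $j=0$.

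The maximum over $(0,\xi_2,\xi_3)\in Q_{t/2}$ is harmless: in the rescaled variables $(x,v)\mapsto(x+\xi_2,v+\xi_3)$ corresponds to $(u,w)\mapsto(u+\delta_u,w+\delta_w)$ with $|\delta_u|,|\delta_w|\lesssim1$ (again using $\|M(t)\|\lesssim t$), and since $q_t(u+\delta_u,w+\delta_w)\ge\tfrac12 q_t(u,w)-q_t(\delta_u,\delta_w)\ge\tfrac12 q_t(u,w)-C$, the shifted integrand is bounded by a constant times the $j=0$ integrand with $e^{-q_t}$ replaced by $e^{-q_t/2}$ — still integrable against any polynomial weight. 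For the $t$-derivatives I would use that $\Gamma_{\bar a}$ solves \eqref{e.homogeneous_kfp} with $g\equiv0$ away from $t=0$, so $\partial_t\Gamma_{\bar a}=-v\cdot\nabla_x\Gamma_{\bar a}+\tr(a(t)D_v^2\Gamma_{\bar a})$; applying the $j=0$ bound to each term on the right (one extra $x$-derivative and one extra power of $|v|$ in the first, two extra $v$-derivatives in the second, and $\|a\|\le\Lambda$) produces exactly the exponent $-1+(3r+s)/2$, i.e.\ \eqref{e.w09161} for $j=1$; for $j\ge2$, since $a$ is merely measurable, I would first mollify $a$ in $t$ while preserving \eqref{e.homogeneous_assumptions} — all the constants above depend only on $d$ and $\Lambda$ — differentiate freely for the smooth approximants, and pass to the limit.

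The main obstacle is the two-sided bound on $P(t)$: without a lower bound of the correct order $t^3$, $\|P(t)^{-1}\|$ is uncontrolled, $q_t$ degenerates, and the rescaling — indeed the integrability of $\Gamma_{\bar a}$ — collapses. The Schur-complement/variational route reduces this to the scalar identity $\int_0^t(s-t/2)^2\,ds=t^3/12$; the remaining steps — the chain-rule bookkeeping for how $M(t)v$ distributes across derivatives and weights, the comparison of Gaussians under the $Q_{t/2}$-shifts, and the mollification limit for $j\ge2$ — are routine.
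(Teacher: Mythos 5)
Your proof is correct, and it follows the same overall blueprint as the paper (explicit rescaling of the Gaussian, with everything hinging on the two-sided bound $P(t)\approx t^3\,\id$), but it proves that key bound by a genuinely different and substantially cleaner route. The paper's proof of \Cref{l.w09032} first computes $P'(t)=M(t)^{\top}a(t)M(t)\ge 0$ and then establishes $\int_0^t |M(s)w|^2\,ds\gtrsim t^3$ via a three-step dynamical argument tracking when $M(s)w$ becomes and stays ``big'' along the ODE $M'(s)=\id-A_0(s)^{-1}a(s)M(s)$. Your observation that $P(t)$ is the Schur complement of $A_0(t)$ in the moment matrix $\big(A_{i+j}(t)\big)_{i,j\in\{0,1\}}$, so that
\[
x^{\top}P(t)x=\min_{y}\int_0^t (y+sx)^{\top}a(s)(y+sx)\,ds,
\]
reduces the lower bound to the scalar computation $\min_y\int_0^t(y+sx)^2\,ds=\tfrac{t^3}{12}x^2$ and gives the two-sided bound in three lines. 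This replaces roughly a page of ODE analysis, is more transparent about \emph{why} the scaling is $t^3$, and would simplify the paper's exposition. Your reorganization of the derivative bookkeeping — a single affine change of variables $(x,v)=(t^{3/2}u+M(t)t^{1/2}w,\,t^{1/2}w)$ absorbing both the shear and the kinetic scaling at once, in place of the paper's separate homogeneous-polynomial lemma (\Cref{l.w09171}) followed by two changes of variables — is also a legitimate alternative that reaches the same conclusion by essentially the same accounting.

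Two small remarks. First, your restriction to $t\le 1$ to ensure the $Q_{t/2}$-shifts fall within the kinetic scales $|x|\sim t^{3/2}$, $|v|\sim t^{1/2}$ is harmless for the application; the paper handles the same point by simply asserting $|\xi_2|\le t^{3/2}$, $|\xi_3|\le\sqrt t$, which agrees with what is actually needed in the proof of \Cref{l.w12041} even though it does not literally match the stated $Q_{t/2}$. Second, your mollification remark for $j\ge 2$ is a correct caution that the paper elides: for merely measurable $a$, $\partial_t^2\Gamma_{\bar a}$ involves $a'$ and does not exist classically, and the paper's identity $\partial_t\Gamma_{\bar a}=-v\cdot\nabla_x\Gamma_{\bar a}+\tr(a(t)D_v^2\Gamma_{\bar a})$ (note also the sign, which the paper misstates but which is immaterial for the bound) only directly handles $j=1$, which is all that the application in \Cref{l.w12041} in fact uses.
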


Using this estimate, \Cref{l.w09031}, we are now able to establish the main result in the $(x,v)$-homogeneous setting that will be the basis of the main Schauder estimate. 

\begin{Proposition}\label{l.w12041}
	Fix $\alpha \in (0,1)$.  Suppose that $f$, $(\partial_t + v\cdot\nabla_x) f$, $D^2_v f$, and $g \in C_x^{\alpha/3}C_v^{\alpha}(Q_1)$.  Assume that $f$ and $g$ have compact support in $Q_1$ and satisfy~\eqref{e.homogeneous_kfp} with coefficient $\bar a$ that satisfies~\eqref{e.homogeneous_assumptions}.  Then
	\be
	[f]_{C_x^{(2+\alpha)/3}(Q_1)}
	+
	[D_{v}^{2}f]_{\calpha(Q_1)}
	\lesssim
	[g]_{C^{\alpha/3}_xC^\alpha_v(Q_1)},
	\ee
	where the implied constants depend only on $\alpha$, $\Lambda$, and the dimension $d$.
\end{Proposition}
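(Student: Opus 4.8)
The plan is to use the representation formula~\eqref{e.kernel} together with the scaling estimates of \Cref{l.w09031} to estimate the relevant H\"older seminorms directly. Since $f$ and $g$ have compact support in $Q_1$ and $f$ solves~\eqref{e.homogeneous_kfp}, we have the pointwise identity $f(z) = \int \Gamma_{\bar a}(\tilde z^{-1}\circ z) g(\tilde z)\, d\tilde z$ for all $z$, and differentiating under the integral yields $D^2_v f(z) = \int D^2_v\left[\Gamma_{\bar a}(\tilde z^{-1}\circ z)\right] g(\tilde z)\, d\tilde z$. The standard trick for Schauder-type bounds is to subtract off a constant: because $D^2_v \Gamma_{\bar a}$ has a delicate singularity, one writes, for a fixed base point $z_0$ and a nearby point $z$,
\be
	D^2_v f(z) - D^2_v f(z_0)
		= \int D^2_v\left[\Gamma_{\bar a}(\tilde z^{-1}\circ z) - \Gamma_{\bar a}(\tilde z^{-1}\circ z_0)\right] \bigl(g(\tilde z) - g(z_0)\bigr)\, d\tilde z,
\ee
the subtraction of $g(z_0)$ being legitimate since $D^2_v$ of a constant times $\Gamma_{\bar a}$ integrates to zero (here one uses that $\int \partial_v^\alpha \Gamma_{\bar a}\,dx\,dv = 0$ for $|\alpha|\ge 1$, which follows from \Cref{l.w09031} with $r=s=0$ and the fundamental theorem of calculus, or directly from the Gaussian form in \Cref{p.gamma_a}). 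One then splits the $\tilde z$-integral into the region near $z_0$ (where one uses $|g(\tilde z)-g(z_0)|\lesssim [g]\,(\dots)$ against $|D^2_v\Gamma_{\bar a}(\tilde z^{-1}\circ z)|$, not the difference) and the region far from $z_0$ (where one Taylor-expands in $z$ versus $z_0$, gaining a factor $|z_0^{-1}\circ z|$ in the appropriate kinetic "distance" and paying a higher-order singularity in time).

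Concretely, I would work with the kinetic quasi-metric: writing $\rho = \rho(z_0^{-1}\circ z)$ for the natural scale, one covers $Q_1$ by dyadic time-slabs $\tilde t \in (t_0 - 2^{-k}, t_0 - 2^{-k-1}]$ (more precisely slabs where $t - \tilde t \approx 2^{-k}$), on each of which $\Gamma_{\bar a}(\tilde z^{-1}\circ z)$ is supported and smooth. On the "near" slabs, $2^{-k}\lesssim \rho^2$, one bounds $|g(\tilde z)-g(z_0)|\lesssim [g]\,(2^{-k})^{\alpha/2}$ using that the support of $\Gamma_{\bar a}(\tilde z^{-1}\circ z)$ in $\tilde x,\tilde v$ has the correct kinetic size, and integrates $|D^2_v\Gamma_{\bar a}|$ over that slab using \Cref{l.w09031} with $j=0$, $|\alpha|=2$, $|\beta|=0$, $r=s=0$, getting a bound $\sum_{2^{-k}\lesssim \rho^2} (2^{-k})^{-1}(2^{-k})^{\alpha/2}\lesssim \rho^\alpha [g]$, as desired. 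On the "far" slabs, $2^{-k}\gtrsim \rho^2$, one uses the mean value theorem in the first argument of $\Gamma_{\bar a}$: $|\Gamma_{\bar a}(\tilde z^{-1}\circ z) - \Gamma_{\bar a}(\tilde z^{-1}\circ z_0)| \lesssim \rho \cdot \sup |\nabla \Gamma_{\bar a}|$ where "$\nabla$" is taken along the appropriate directions — a $v$-shift costs one power of $2^{-k/2}$, an $x$-shift (including the $t v'$ term coming from the group law) costs $2^{-3k/2}$, and a $t$-shift costs $2^{-k}$ — all of which are dominated by $\rho$ against one extra half-power of $2^{-k}$ in the denominator. Here the max over shifts $(0,\xi_2,\xi_3)\in Q_{t/2}$ in the statement of \Cref{l.w09031} is exactly what absorbs the difference of base points in the spatial variables. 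Summing the geometric series $\sum_{2^{-k}\gtrsim \rho^2}$ and pairing with $|g(\tilde z)-g(z_0)|\lesssim [g](2^{-k})^{\alpha/2}$ again gives $\lesssim \rho^\alpha [g]$. Combining the two regimes, and noting the $C^{\alpha/3}_x$-in-$x$ and $C^\alpha_v$-in-$v$ directions are handled by the same dyadic argument with the roles of $|x_0 - x|^{1/3}$ and $|v_0-v|$ controlling $\rho$, gives $[D^2_v f]_{\calpha(Q_1)}\lesssim [g]_{C^{\alpha/3}_xC^\alpha_v(Q_1)}$.

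The estimate on $[f]_{C_x^{(2+\alpha)/3}(Q_1)}$ is obtained the same way: one differentiates the representation formula three times in $x$ — equivalently, applies $\partial_x^\beta$ with $|\beta|=$ the integer part appropriate to the exponent $(2+\alpha)/3$ — and runs the identical near/far dyadic decomposition, now invoking \Cref{l.w09031} with the corresponding $|\beta|$ (the scaling $t^{-(|\alpha|+3|\beta|)/2}$ is precisely calibrated so that $2+\alpha$ derivatives in the $x$-scale cost $t^{-(2+\alpha)/2}$, integrable against the $(2^{-k})^{\alpha/2}$ from $g$ over near slabs and summable over far slabs). The main obstacle, I expect, is purely bookkeeping: correctly tracking the group-law correction term $x' + x + tv'$ (which makes "$x$-shifts" and "time-shifts" interact) so that the base-point difference $z_0^{-1}\circ z$ is measured in the right quasi-metric, and verifying that the vanishing-moment subtraction of $g(z_0)$ is justified on each slab, not just globally. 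None of this requires new ideas beyond \Cref{l.w09031}; the delicate, non-obvious analytic content — integrability and sharp $t$-scaling of the derivatives of $\Gamma_{\bar a}$ despite the opaque form of $P(t)$ — has already been isolated in \Cref{p.gamma_a} and \Cref{l.w09031}, so this step is a (careful) convolution estimate.
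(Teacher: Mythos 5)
Your overall strategy (representation formula, vanishing-moment subtraction, near/far split in the time variable, Taylor expansion of the kernel in the far region, and the moment estimates of \Cref{l.w09031} to close) is the same as the paper's, and your dyadic decomposition in time slabs is just a re-packaging of the paper's single continuous split at $\tilde t = 2h^2$. However, there is a genuine gap in the choice of subtraction, and it is precisely at the point where the novelty of the result lives.

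You subtract the \emph{constant} $g(z_0)$ and then bound $|g(\tilde z)-g(z_0)|$ using the seminorm $[g]_{C^{\alpha/3}_xC^\alpha_v}$. But $\tilde z$ ranges over a slab with $\tilde t\approx t_0-2^{-k}\neq t_0$, so $g(\tilde z)-g(z_0)$ contains an increment in time — and $g$ is assumed to have \emph{no} regularity in $t$. The inequality $|g(\tilde z)-g(z_0)|\lesssim [g]_{\calpha}(2^{-k})^{\alpha/2}$ is simply false under the hypotheses; in fact $|g(\tilde z)-g(z_0)|$ can be $O(\|g\|_{L^\infty})$ no matter how small $|\tilde t - t_0|$ is. Since removing the $t$-regularity requirement on $g$ (and the coefficient $a$) is the whole point of \Cref{t.Schauder}, this step cannot be repaired by ``bookkeeping.'' The paper's proof sidesteps this by subtracting a quantity that matches $g$ at the \emph{same} time as the integrand: in the near region it subtracts $\delta g(\tilde z_0)$ with $\tilde z_0=(\tilde t,\tilde x,0)$, so that after an algebraic rearrangement the resulting difference of $g$'s is taken between two points sharing the same time coordinate $t-\tilde t$, with increments only in $x$ and $v$ (of sizes $\tilde t|\tilde v|$ and $|\tilde v|$); in the far region it subtracts $g(\tilde t,\tilde x-(t-\tilde t)v,v)$, again a difference in spatial and velocity arguments at the fixed time $\tilde t$. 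Both exploit the vanishing moment $\int\partial_{v_i v_j}\Gamma_{\bar a}\,d\tilde v=0$ for fixed $(\tilde t,\tilde x)$ — a $\tilde v$-integral identity, not the global identity you invoke. This is the mechanism that lets the proof pay only for spatial and velocity regularity of $g$.

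A secondary inaccuracy: you say that ``the support of $\Gamma_{\bar a}(\tilde z^{-1}\circ z)$ in $\tilde x,\tilde v$ has the correct kinetic size,'' but $\Gamma_{\bar a}$ is a Gaussian and has full support; what controls large $|\tilde x|,|\tilde v|$ is the polynomial-moment scaling in \Cref{l.w09031} (the $|x|^r|v|^s$ weights), not any support property. This is repairable, unlike the time-regularity issue, but the near-region claim as you state it pointwise is not valid — it should be a moment integral bound.
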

\begin{proof}
	We begin by estimating $[D_{v}^{2}f]_{\calpha(Q_1)}$.  Recalling~\eqref{e.kernel}, for any $z=(t,x,v) \in  Q_{1}$ and any $1\leq i, j\leq d$,
	\be
	\begin{split}
		\partial_{v_{i}v_{j}}f(z)
			&=
			\int_{-1}^{t} \int_{\R^d}\int_{\R^d}
			\partial_{v_{i}v_{j}}\Gamma_{\bar a}(t, x-\tilde x - (t-\tilde t)\tilde v, v - \tilde v; \tilde t)g(\tilde z)\,d\tilde z
	.
	\end{split}
	\ee

	Fix another point $z'\in Q_1$ of the form
	\be \notag
		z' = (t, x', v').
	\ee  
	Notice that $z$ and $z'$ have the same $t$-coordinate.  This is due to the fact that we do not prove any regularity of $D^2_v f$ in $t$.  
	Let
\be
\begin{split}\notag
	&h
		= |x-x'|^{1/3} + |v-v'|
		\quad\text{ and }
	\\
	&\delta g(\tilde z)
		= g(t - \tilde t, x - \tilde x - \tilde t(v-\tilde v), v - \tilde v)
			- g(t - \tilde t, x' - \tilde x - \tilde t(v' - \tilde v), v' - \tilde v).
\end{split}
\ee
	Then, after making the change of variables
	\be
		\tilde z \mapsto (t-\tilde t, x - \tilde x - (t-\tilde t)\tilde v, v - \tilde v),
	\ee
	we find
	\begin{equation}\notag
	\begin{split}
	\partial_{v_{i}v_{j}}f(z)-\partial_{v_{i}v_{j}}f(z')
	&=
	\left(\int_{0}^{2h^2}+\int_{2h^2}^{1+t}\right) \int_{\R^d}\int_{\R^d}
	\partial_{v_{i}v_{j}}\Gamma_{\bar a}(t, \tilde x, \tilde v; t-\tilde t)\delta g(\tilde z)\, d\tilde z
	\\&=
	I_1 +I_2.
	\end{split}
	\end{equation}
	We now estimate each of $I_1$ and $I_2$ in turn.
	
	\medskip
	{\bf Estimating $I_1$:} Here, integrating $\partial_{v_iv_j}\Gamma_{\bar a}$ over $(\tilde x, \tilde v)$ leaves us with an $O(1/\tilde t)$ term.  This means that our approach needs to use the regularity of $g$ to obtain $\tilde t$-terms, either directly or via \Cref{l.w09031}.  Using the regularity of $\partial_{v_iv_j}\Gamma_{\bar a}$ will only exacerbate this issue, so we do not use it, but obtain extra smallness instead by working on a small interval $[0,2h^2]$.

	To this end, we smuggle in a new term.  Setting $\tilde z_{0}=(\tilde t, \tilde x, 0)$, we see that
	\be\notag
		\int_0^{2h^2} \int_{\R^d}\int_{\R^d} \partial_{v_iv_j} \Gamma_{\bar a}(t, \tilde x, \tilde v; t- \tilde t) \delta g(\tilde z_0)\, d\tilde z
		= 0.
	\ee
	Hence, we obtain
	\be\notag
	|I_1|
	=
	\Big|	\int_{0}^{2h^2} \int_{\R^d}\int_{\R^d}
	\partial_{v_{i}v_{j}}\Gamma_{\bar a}(t, \tilde x, \tilde v; t- \tilde t)(\delta g(\tilde z)-\delta g(\tilde z_{0}))\,d\tilde z \Big|.
	\ee
	Next, we point out that
	\be\label{e.c032901}
	\begin{split}
		|\delta g(\tilde z) - \delta g(\tilde z_0)|
			&= \big|
				\left(
					g(t-\tilde t, x - \tilde x - \tilde t(v - \tilde v), v - \tilde v) - g(t-\tilde t, x' - \tilde x - \tilde t(v' - \tilde v), v' - \tilde v)
				\right)
			\\&\qquad
				- \left(
					g(t-\tilde t, x - \tilde x - \tilde t v , v) - g(t-\tilde t, x' - \tilde x - \tilde tv', v')
				\right)\big|
			\\&
			= \big|
				\left(
					g(t-\tilde t, x - \tilde x - \tilde t(v - \tilde v), v - \tilde v) - g(t-\tilde t, x - \tilde x - \tilde t v , v)
				\right)
			\\&\qquad
				- \left(g(t-\tilde t, x' - \tilde x - \tilde t(v' - \tilde v), v' - \tilde v)
					 - g(t-\tilde t, x' - \tilde x - \tilde tv', v')
				\right)\big|
			\\&
			\leq 2[g]_{C^{\alpha/3}_xC^\alpha_v(Q_1)}
				\left( (\tilde t |\tilde v|)^{\alpha/3}
					+ |\tilde v|^\alpha\right),
	\end{split}
	\ee
	where, to get the second inequality, we swapped the places of the second and third terms in the absolute values.  This has the advantage of avoiding a $t-t'$ term that would require time regularity of $g$.
	
	Using~\eqref{e.c032901} and then \Cref{l.w09031}, we find
	\be
	\begin{split}\label{e.w05011}
	|I_1|
	&\lesssim
	[g]_{C^{\alpha/3}_x C^{\alpha}_v(Q_1)} 
	\int_{0}^{2h^2} \int_{\R^d}\int_{\R^d}
	|\partial_{v_{i}v_{j}}\Gamma_{\bar a}(t, \tilde x, \tilde v; t- \tilde t)|((\tilde t |\tilde v|)^{\alpha/3}+|\tilde v|^{\alpha})\,d\tilde z
	\\&\lesssim
	[g]_{C^{\alpha/3}_x C^{\alpha}_v(Q_1)} 
	\int_{0}^{2h^2} s^{\alpha/2-1}\,ds
	\lesssim
	[g]_{\calpha(Q_1)} h^{\alpha}
	.
	\end{split}
	\end{equation}

	\medskip

	{\bf Estimating $I_2$:}  
%
%
	In this case, we are insulated from $\tilde t = 0$ so we may (and do) use the regularity of $\partial_{v_iv_j}\Gamma_{\bar a}$ here. 
%
%
%
	The first step is to separate the two integrals in $I_2$ (recall that $\delta g$ is a difference of two terms and then change variables $\tilde z \mapsto z - \tilde z$ and $\tilde z \mapsto z' - \tilde z$, respectively.  This yields:
	\begin{equation}\notag
	\begin{split}
	I_2
	&=
	\int_{-1}^{t-2h^2} \int_{\R^d}\int_{\R^d}
	(\partial_{v_{i}v_{j}}\Gamma_{\bar a}(t, x - \tilde x, v -\tilde v; \tilde t)-\partial_{v_{i}v_{j}}\Gamma_{\bar a}(t, x' - \tilde x, v' -\tilde v; \tilde t))
	g(\tilde t,\tilde x-(t-\tilde t)\tilde v,\tilde v)\,d\tilde z.
	\end{split}
	\end{equation}
	The key reason for doing this is so that the resulting terms, $\partial_{v_iv_j}\Gamma_{\bar a}$, are a full $\tilde v$-derivative.  Hence,
    	\begin{equation}\notag
    		\int_{\R^d}
    	\partial_{v_{i}v_{j}}\Gamma_{\bar a}(t,x-\tilde x,v-\tilde v; \tilde t) \,d\tilde v
	= 
	\int_{\R^d}\partial_{v_{i}v_{j}}\Gamma_{\bar a}(t,x'-\tilde x,v'-\tilde v; \tilde t)\,d\tilde v
    	=
    	0.
    	\end{equation}
    	Therefore, we rewrite $I_{2}$ as 
    	\begin{equation}
    	\begin{split}
	I_2
    	&=
    		\int_{-1}^{t-2h^2} \int_{\R^d}\int_{\R^d}
    	(\partial_{v_{i}v_{j}}\Gamma_{\bar a}(t, x-\tilde x, v - \tilde v; \tilde t)-\partial_{v_{i}v_{j}}\Gamma_{\bar a}(t, x'-\tilde x, v' - \tilde v; \tilde t)
    	\\&\phantom{MMMMMMMa}\times
    	(g(\tilde t,\tilde x-(t-\tilde t)\tilde v,\tilde v)-g(\tilde t,\tilde x-(t-\tilde t)v,v))\,d\tilde z.
    	\end{split}
    	\end{equation}
	Notice that $z - z' \in Q_h$.  By a Taylor approximation, we see that
    	\begin{equation}\notag
    	\begin{split}
    	&|\partial_{v_{i}v_{j}}\Gamma_{\bar a}(t, x-\tilde x, v - \tilde v; \tilde t)-\partial_{v_{i}v_{j}}\Gamma_{\bar a}(t, x'-\tilde x, v' - \tilde v; \tilde t)|
    	\\&\leq
	    	\max_{\xi \in Q_h,\xi_1=0}\left(
    	h^3 |\nabla_x \partial_{v_{i}v_{j}}\Gamma_{\bar a}(t, x-\tilde x + \xi_2, v - \tilde v + \xi_3; \tilde t)|
    	+
    	h |\nabla_v \partial_{v_{i}v_{j}}\Gamma_{\bar a}(t, x-\tilde x + \xi_2, v - \tilde v + \xi_3; \tilde t)|
    	\right).
    	\end{split}
    	\end{equation}
	Additionally, we have
	\be
		|g(\tilde t,\tilde x-(t-\tilde t)\tilde v,\tilde v)-g(\tilde t,\tilde x-(t-\tilde t)v,v)|
			\leq [g]_{\calpha(Q_1)} [(|t-\tilde t||\tilde v - v|)^{\alpha/3}
				+ |\tilde v - v|^\alpha].
	\ee

    	Therefore,  by a shifting back in all variables, $(\tilde x, \tilde v) \mapsto(x-\tilde x, v - \tilde v)$, 
	we see
    	\begin{equation}\notag
    	\begin{split}
	&|I_2|
    	\leq
    	[g]_{\calpha(Q_1)}
    	\int_{2h^2}^{t+1} \int_{\R^d}\int_{\R^d}
	\max_{\xi \in Q_h,\xi_1=0}\Big(
    	h^3 |\nabla_x \partial_{v_{i}v_{j}}\Gamma_{\bar a}(t, \tilde x + \xi_2, \tilde v + \xi_3; \tilde t)|
    	\\&
	\phantom{MMMMMMMMMMMM}+
    	h |\nabla_v \partial_{v_{i}v_{j}}\Gamma_{\bar a}(t, \tilde x + \xi_2,  \tilde v + \xi_3; \tilde t)|
    	\Big)
    	(|\tilde t|^{\alpha/3}|\tilde v|^{\alpha/3}
    	+
    	|\tilde v|^{\alpha})
    	\,d\tilde z.
    	\end{split}
    	\end{equation}
	Using then \Cref{l.w09031}, which, effectively turns $\tilde x$ and $\tilde v$ into $\tilde t^{3/2}$ and $\tilde t^{1/2}$, respectively, and $\partial_t$, $\nabla_x$ and $\nabla_v$ into $\tilde t^{-1}$, $\tilde t^{-3/2}$, and $\tilde t^{-1/2}$, respectively, we find
	\be\label{e.c050401}
		\begin{split}
		|I_2|
			&\lesssim [g]_{\calpha(Q_1)} \int_{2h^2}^{t+1} \Big(
			\frac{h^2}{\tilde t^2} + \frac{h^3}{\tilde t^{5/2}}
				+ \frac{h}{\tilde t^{3/2}}\Big)
				\tilde t^{\alpha/2}
				\, d\tilde t
			\lesssim
				[g]_{\calpha(Q_1)} h^\alpha.
		\end{split}
	\ee

	Combining this,~\eqref{e.w05011}, and~\eqref{e.c050401} completes the estimate of $[D_{v}^{2}f]_{\calpha(Q_1)}$ as claimed in the statement.
	

	The estimate of $[f]_{C^{\frac{2+\alpha}{3}}_x(Q_1)}$ essentially proceeds along the same lines, but is significantly simpler as there is no difference in the Galilean terms $-\tilde t(v - \tilde v)$ in $\delta g$.  Additionally, the details are exactly the same as in \cite[Lemma~2.5]{HS}.  As such we omit the proof.
%
%
%
%
	\end{proof}

\subsubsection{Proof of \Cref{l.w09031}: integrals of $\Gamma_{\bar a}$ and their scaling in $t$}\label{s.kernel_scaling}

Our first observation in service of establishing \Cref{l.w09031} is that the integral is well-defined due to the positivity of the exponential terms and that it satisfies the appropriate scaling laws.
%
%
%
\begin{Lemma}\label{l.w09032}
	The matrix $P(t)$ is invertible.  Additionally, we have the following bounds: for all $t>\tilde t$, any $i= 0,1,2$, and any vector $w\in\R^d$,
	\be\notag
		\begin{aligned}
		(i) & \quad w\cdot P(t;\tilde t)w
			\approx (t-\tilde t)^3 |w|
		&\qquad\qquad
		(ii) & \quad |A_i(t;\tilde t)w|
			\approx (t-\tilde t)^{i+1}|w|
		\\
		(iii) & \quad |M(t;\tilde t)w|
			\lesssim (t-\tilde t)|w|.
		\end{aligned}
	\ee
	where the constants depends only on $\Lambda$.
\end{Lemma}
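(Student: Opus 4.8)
The plan is to dispatch $(ii)$ and $(iii)$ by elementary linear algebra and to reduce $(i)$ — the only substantive part, together with invertibility of $P(t)$ — to a Schur-complement identity.

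For $(ii)$, observe that each $A_i(t)$ is an integral of the symmetric positive matrices $s^i a(s)$ against the nonnegative weight $ds$, hence is itself symmetric positive definite for $t>0$; by~\eqref{e.homogeneous_assumptions} its quadratic form obeys
\[
	\frac{t^{i+1}}{(i+1)\Lambda}\,|w|^2
	\;\le\; w\cdot A_i(t)w
	= \int_0^t s^i\,\bigl(w\cdot a(s)w\bigr)\,ds
	\;\le\; \frac{\Lambda\, t^{i+1}}{i+1}\,|w|^2 .
\]
Because $A_i(t)$ is symmetric, this pins both of its extreme eigenvalues between constant multiples of $t^{i+1}$, and $(ii)$ follows from $\lambda_{\min}(S)\,|w|\le|Sw|\le\lambda_{\max}(S)\,|w|$; in particular $A_0(t)$ is invertible with $\|A_0(t)^{-1}\|\le\Lambda/t$. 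For $(iii)$, write $M(t)w=tw-A_0(t)^{-1}A_1(t)w$ and estimate the second term crudely using $(ii)$: $|A_0(t)^{-1}A_1(t)w|\le\|A_0(t)^{-1}\|\,|A_1(t)w|\lesssim(\Lambda/t)(\Lambda t^2)\,|w|\lesssim t\,|w|$, so $|M(t)w|\lesssim t\,|w|$.

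It remains to prove $(i)$ (in the form $w\cdot P(t)w\approx t^3|w|^2$). The upper bound is immediate, since $A_1(t)A_0(t)^{-1}A_1(t)=A_1(t)^{\top}A_0(t)^{-1}A_1(t)$ is positive semidefinite (as $A_0(t)^{-1}\succ0$), so $w\cdot P(t)w\le w\cdot A_2(t)w\lesssim t^3|w|^2$ by $(ii)$. For the lower bound I would use that $P(t)$ is the Schur complement of the invertible block $A_0(t)$ in
\[
	\mathcal A(t)=\begin{pmatrix}A_2(t)&A_1(t)\\ A_1(t)&A_0(t)\end{pmatrix}.
\]
Completing the square in $q$ gives $w\cdot P(t)w=\min_{q\in\R^d}(w,q)\cdot\mathcal A(t)\,(w,q)$, while
\[
	(w,q)\cdot\mathcal A(t)\,(w,q)=\int_0^t\bigl(sw+q\bigr)\cdot a(s)\bigl(sw+q\bigr)\,ds\;\ge\;\frac1\Lambda\int_0^t|sw+q|^2\,ds .
\]
Rescaling $s=t\sigma$ rewrites $\int_0^t|sw+q|^2\,ds=t\int_0^1|\sigma(tw)+q|^2\,d\sigma$, i.e.\ $t$ times the value at $(tw,q)$ of the fixed quadratic form with matrix $\bigl(\begin{smallmatrix}1/3&1/2\\1/2&1\end{smallmatrix}\bigr)\otimes\Id_d$; this form is positive definite because $\det\bigl(\begin{smallmatrix}1/3&1/2\\1/2&1\end{smallmatrix}\bigr)=1/12>0$, so $\int_0^t|sw+q|^2\,ds\ge c_0\,t\bigl(t^2|w|^2+|q|^2\bigr)\ge c_0\,t^3|w|^2$ for an absolute $c_0>0$. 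Taking the minimum over $q$ yields $w\cdot P(t)w\ge(c_0/\Lambda)\,t^3|w|^2$, which in particular shows $P(t)$ is positive definite, hence invertible.

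I expect the lower bound in $(i)$ to be the main obstacle: a priori $P(t)=A_2(t)-A_1(t)A_0(t)^{-1}A_1(t)$ is a difference of two matrices each of size $t^3$, so there is genuine cancellation, and neither positive-definiteness nor the correct $t$-scaling is visible from~\eqref{e.A_i}. The Schur-complement reformulation is precisely what tames the cancellation, rewriting $w\cdot P(t)w$ as an honest minimum of a manifestly nonnegative quantity; after that the only real computation is the elementary comparison $\int_0^t|sw+q|^2\,ds\approx t^3|w|^2+t|q|^2$, which the rescaling reduces to positive-definiteness of one explicit $2\times2$ matrix. A more hands-on alternative — differentiating $A_0,A_1,A_2,P$ in $t$ and tracking the resulting matrix ODEs — should also work, but it looks considerably messier.
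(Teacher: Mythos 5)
Your argument is correct, and it takes a genuinely different and cleaner route to the lower bound in $(i)$ than the paper does. The paper starts from the same nonnegativity observation you implicitly use — it differentiates $P$ in $t$ and writes $P'(t) = M(t)^{\top} a(t) M(t) \ge 0$, hence $w\cdot P(t)w = \int_0^t (M(s)w)\cdot a(s)M(s)w\,ds \ge \frac{1}{\Lambda}\int_0^t |M(s)w|^2\,ds$ — but is then forced into a delicate three-step dynamical argument (using the ODE $M'(s) = \Id - A_0(s)^{-1}a(s)M(s)$ and carefully chosen thresholds $\eps^3 t$, $\eps^4 t$) to show $\int_0^t |M(s)w|^2\,ds \gtrsim t^3$, with an explicit small parameter $\eps$ and a three-stage ``step one / step two / step three'' structure. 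Your Schur-complement reformulation
\[
	w\cdot P(t)w \;=\; \min_{q\in\R^d}\int_0^t (sw+q)\cdot a(s)(sw+q)\,ds \;\ge\; \frac{1}{\Lambda}\min_{q\in\R^d}\int_0^t |sw+q|^2\,ds
\]
removes the cancellation at the outset, and the remaining computation reduces, after rescaling $s=t\sigma$, to the positive-definiteness of the fixed $2\times 2$ matrix $\bigl(\begin{smallmatrix}1/3&1/2\\1/2&1\end{smallmatrix}\bigr)$. This is purely algebraic, avoids the time-derivative and the associated singularity near $s=0$ (which the paper has to work around in its Step One), and gives an explicit constant $c_0/\Lambda$. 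The two approaches are mathematically related — your minimizing $q$ is exactly $q=-A_0^{-1}A_1 w = M(t)w - tw$, and $M(s)w$ is the integrand the paper integrates — but yours bypasses the dynamics entirely. The paper's $P'(t)=M^\top a M$ identity is used elsewhere only within this proof, so nothing downstream is lost by substituting your argument. (Both you and the paper implicitly correct the typo in the statement: $(i)$ should read $w\cdot P(t)w\approx t^3|w|^2$.)
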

We note that the lower bound in $(ii)$ and all of the upper bounds are straightforward, but the lower bound in (i) is not obvious and nontrivial to prove.  As the proof is somewhat long, we postpone it to \Cref{s.matrices}.

Next, we observe that the partial derivative of $\Gamma$ appearing in \Cref{l.w09031} has a particular form.
\begin{Lemma}\label{l.w09171}
	Fix any multi-indices $\alpha$ and $\beta$ as in \Cref{l.w09031}. Then there exist a homogeneous polynomial $\cP_{\alpha,\beta}$ of order $|\alpha| + 3|\beta|$ such that
	\be\label{e.c9164}
	\begin{split}
		\frac{\partial_x^\beta \partial_v^\alpha \Gamma_{\bar a}(t,x,v\tilde t)}{\Gamma_{\bar a}(t,x,v;\tilde t)}
		&= \cP_{\alpha,\beta}(
		(A_0^{-1/2})_{ij},
		(P^{-1/2} M)_{ij},
		(A_0^{-1}v)_j,
		(M^T P^{-1} M v)_j,
		\\&\qquad\qquad
		(M^T P^{-1} x)_j,
		P^{-1/6},
		(M^T P^{-1})^{1/3}
		),
	\end{split}
	\ee
	where the last two terms in the polynomial, $(M^T P^{-1})^{1/3}$ and $P^{-1/6}$, are understood to only appear in the polynomial in powers that are multiples of three.
\end{Lemma}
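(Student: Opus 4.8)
The plan is to exploit the fact that, for fixed $t>0$, $\Gamma_a(t,\cdot,\cdot)$ is a Gaussian in $(x,v)$, so the ratio of any of its $(x,v)$-derivatives to itself is the associated (generalized) Hermite polynomial, and then to reorganize that polynomial in terms of the seven designated quantities.

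First I would write, from \eqref{e.gamma_a}, $\Gamma_a(t,x,v) = \gamma(t)\,e^{-\Phi(t,x,v)}$ for $t>0$, where $\gamma(t) = (4\pi)^{-d}\big(\det(A_0(t)P(t))\big)^{-1/2}$ is independent of $(x,v)$ and $\Phi(t,x,v) = \tfrac14\,v\cdot A_0(t)^{-1}v + (x-M(t)v)\cdot P(t)^{-1}(x-M(t)v)$ is a quadratic form in $(x,v)$, positive definite by \Cref{l.w09032}. Since $\gamma(t)$ cancels in the ratio, it suffices to analyze $\partial_x^\beta\partial_v^\alpha(e^{-\Phi})/e^{-\Phi}$. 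Writing $\partial_x^\beta\partial_v^\alpha(e^{-\Phi}) = e^{-\Phi}\,H_{\beta,\alpha}$, the product rule gives $H_{\beta+e_k,\alpha} = \partial_{x_k}H_{\beta,\alpha} - (\partial_{x_k}\Phi)\,H_{\beta,\alpha}$ and $H_{\beta,\alpha+e_k} = \partial_{v_k}H_{\beta,\alpha} - (\partial_{v_k}\Phi)\,H_{\beta,\alpha}$, with $H_{0,0}=1$. Because $\Phi$ is quadratic, $\partial_{x_k}\Phi$ and $\partial_{v_k}\Phi$ are affine in $(x,v)$ and all second derivatives of $\Phi$ are independent of $(x,v)$; hence each $H_{\beta,\alpha}$ is a polynomial in $(x,v)$ assembled from these ingredients.

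Next I would compute the ingredients explicitly: with $y=x-Mv$,
\[
\partial_{v_k}\Phi = \tfrac12 (A_0^{-1}v)_k - 2(M^T P^{-1}x)_k + 2(M^T P^{-1}Mv)_k, \qquad \partial_{x_k}\Phi = 2(P^{-1}(x-Mv))_k,
\]
\[
\partial_{v_k v_l}\Phi = \tfrac12 (A_0^{-1})_{kl} + 2(M^T P^{-1}M)_{kl}, \qquad \partial_{x_k v_l}\Phi = -2(P^{-1}M)_{kl}, \qquad \partial_{x_k x_l}\Phi = 2(P^{-1})_{kl}.
\]
These get rewritten in the designated atoms: the symmetric matrices $A_0^{-1}$ and $M^T P^{-1}M$ factor as $(A_0^{-1/2})^2$ and $(P^{-1/2}M)^T(P^{-1/2}M)$; the vectors $A_0^{-1}v$, $M^TP^{-1}x$, $M^TP^{-1}Mv$ are themselves atoms; and the remaining pure-$x$ objects $P^{-1}$, $P^{-1}M$, $P^{-1}(x-Mv)$ are expanded through $P^{-1/6}$ and $(M^TP^{-1})^{1/3}$. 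The point of including the cube-root quantities is precisely to assign a consistent (weighted) degree in which each $v$-derivative raises the order by $1$ and each $x$-derivative raises it by $3$: one sets $\deg(A_0^{-1/2}) = \deg(P^{-1/6}) = \deg(P^{-1/2}M) = \deg(A_0^{-1}v) = \deg(M^TP^{-1}x) = \deg(M^TP^{-1}Mv) = 1$, together with $\deg(x) = -3$, $\deg(v) = -1$, which forces $\deg(M^TP^{-1})=4$ and hence $\deg\big((M^TP^{-1})^{1/3}\big) = 4/3$, so that this atom can only occur in powers that are multiples of $3$ (where it reassembles the genuine matrix $(M^TP^{-1})^k$). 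A direct check then confirms $\deg(\partial_v\Phi)=1$, $\deg(\partial_x\Phi)=3$, $\deg(\partial_{vv}\Phi)=2$, $\deg(\partial_{xv}\Phi)=4$, $\deg(\partial_{xx}\Phi)=6$.

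Finally I would prove \eqref{e.c9164} by induction on $|\alpha|+|\beta|$, the base case $H_{0,0}=1$ being trivial. Assuming $H_{\beta,\alpha} = \cP_{\alpha,\beta}(\cdots)$ is homogeneous of order $|\alpha|+3|\beta|$ in the seven atoms, the term $(\partial_{v_k}\Phi)H_{\beta,\alpha}$ (resp.\ $(\partial_{x_k}\Phi)H_{\beta,\alpha}$) raises the order by exactly $1$ (resp.\ $3$) by the previous paragraph; and $\partial_{v_k}H_{\beta,\alpha}$ (resp.\ $\partial_{x_k}H_{\beta,\alpha}$) lowers the polynomial degree in $(x,v)$ by one but replaces a differentiated vector atom by a pure-$t$ matrix atom of precisely the compensating weight — e.g. $\partial_{v_l}(A_0^{-1}v)_k = ((A_0^{-1/2})^2)_{kl}$ adds the missing $1$, $\partial_{x_l}(M^TP^{-1}x)_k = (M^TP^{-1})_{kl} = ((M^TP^{-1})^{1/3})^3_{\,kl}$ adds the missing $3$, and $\partial_{x_l}(P^{-1}(x-Mv))_k = (P^{-1})_{kl} = ((P^{-1/6})^6)_{kl}$ adds the missing $3$ — so the order again rises by exactly $1$ resp.\ $3$. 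The main obstacle is this last bookkeeping step: one must organize the rewriting so that differentiation never produces a matrix outside the allowed list and so that the exponents of the cube-root atoms stay multiples of three throughout; this requires fixing, once and for all, which designated atom absorbs each occurrence of $M$, $P^{-1/2}$, and $P^{-1}$. Everything else is routine, and the polynomials $\cP_{\alpha,\beta}$ can in principle be extracted from the recursion.
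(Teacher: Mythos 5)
Your approach is essentially the paper's: induct on the multi-index, exploit the fact that $\Gamma_a(t,\cdot,\cdot)$ is a Gaussian in $(x,v)$ so that the ratio of any $(x,v)$-derivative to $\Gamma_a$ obeys a Hermite-type recursion, and verify the degree book-keeping. The paper phrases the inductive step as ``$\partial_{v_i}$ falls on $\cP_{\tilde\alpha}$ or on $\Gamma$'' rather than through the explicit recursion for $H_{\beta,\alpha}$, but the content and the $v$-derivative computation are identical; both proofs then assert the $\beta$-step (pure $x$-derivatives) is analogous.

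The one point worth scrutiny is precisely the one you flag as ``the main obstacle.'' Your formula $\partial_{x_k}\Phi = 2\bigl(P^{-1}(x-Mv)\bigr)_k$ is correct, but the assertion that $P^{-1}(x-Mv)$ ``is expanded through $P^{-1/6}$ and $(M^TP^{-1})^{1/3}$'' is not substantiated: neither $(P^{-1}x)_k$ nor $(P^{-1}Mv)_k$ is visibly a degree-$3$ monomial in the seven listed atoms, because $x$ and $v$ occur only inside the vector atoms $(A_0^{-1}v)_j$, $(M^TP^{-1}Mv)_j$, $(M^TP^{-1}x)_j$, and there is no inverse-matrix atom with which to strip the $A_0^{-1}$ or $M^T$ prefactor (when $a\equiv\Id$ the matrices are all scalar multiples of $\Id$ and the algebra closes; for general $a(t)$ they do not commute). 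This is the same step the paper omits with ``essentially the same''; it is harmless in context because the lemma is used only to get $|\cP_{\alpha,\beta}|\lesssim \widetilde{\cP}_{\alpha,\beta}(t^{-1/2},x/t^2,v/t)$, and the scaling $|P^{-1}(x-Mv)|\lesssim |x|/t^3+|v|/t^2$ follows directly from \Cref{l.w09032}, but a complete proof of the lemma as stated would require either enlarging the atom list (e.g.\ admitting $(P^{-1}(x-Mv))_j$ as a degree-three atom) or an algebraic identity you have not supplied. You identified the gap correctly but did not close it, so your proposal matches the paper's proof including its omission.
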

As its proof is somewhat short, we give it here.  Before doing so, however, we make two observations.  First, the above is essentially obvious when $\bar a(t) = \Id$.
Second (setting $\tilde t=0$ for ease), using \Cref{l.w09171} and the kinetic scaling, in which we think of $v \sim \sqrt t$ and $x\sim t^{3/2}$, every input in the polynomial $\cP_{\alpha,\beta}$ is $\sim t^{-1/2}$, making the entire polynomial $\sim t^{-|\alpha|-3|\beta|}$.  This is precisely the reason that \Cref{l.w09031} holds.

\begin{proof}
	Our proof proceeds by induction, first on the magnitude of $\alpha$ and then on the magnitude of $\beta$.  The case $|\alpha| = |\beta| = 0$ is obvious.
	
	We now consider the case where $\partial_v^\alpha = \partial_{v_i} \partial_v^{\tilde \alpha}$ for some $i\in\{1,\dots,d\}$ and $|\tilde \alpha|\geq 0$, and we assume that~\eqref{e.c9164} holds for $\partial_v^{\tilde \alpha} \Gamma$.  The derivative $\partial_{v_i}$ can, by the product rule, either fall on $\cP_{\tilde \alpha}$ or $\Gamma$.  We consider each case in turn.

	First, consider the former case; that is, consider the term that arises when the $\partial_{v_i}$ falls on $\cP_{\tilde \alpha}$. Observe that $\partial_{v_i} \cP_{\tilde \alpha}$ yields a linear combination of terms that are a $|\tilde\alpha|-1$ homogeneous polynomial multiplied by either
	\be\label{e.c032902}
		(A_0^{-1})_{ji}
			= (A_0^{-1/2})_{jk}(A_0^{-1/2})_{ki}
	\ee
	 or
	 \be\label{e.c032903}
	 	(M^T P^{-1} M)_{ji}
			= (M^TP^{-1/2})_{jk} (M^TP^{-1/2})_{ik}.
	\ee
	Each of~\eqref{e.c032902} and~\eqref{e.c032903} are 2-homogeneous in the variables of $\cP_{\alpha}$, making the resulting terms $|\tilde \alpha| - 1 + 2 = |\tilde \alpha| + 1$ homogeneous polynomials, as desired.
	
	We now consider the latter case; that is, when $\partial_{v_i}$ falls on $\Gamma$.  The conclusion is then clear as
\be\notag
	\cP_{\tilde\alpha} \partial_{v_i} \Gamma
	= \cP_{\tilde\alpha}
		\left(
			-\frac{1}{2} (A_0^{-1} v)_i 
			+ 2(M^TPx)_i
			- 2(M^TPM v)_i
		\right)
		\Gamma.
\ee
	Hence, we are finished with the proof when $|\beta| = 0$.
	
	The proof of the induction on $\beta$ is essentially the same; hence, we omit it.
\end{proof}

We are now in a position to prove \Cref{l.w09031}.
\begin{proof}[Proof of \Cref{l.w09031}]
	For ease, set $\tilde t = 0$.  We discuss first the case when $(\xi_1, \xi_2, \xi_3) = 0$ and $j=0$.

	First, for notational ease, let
	\be\notag
		I
			=\int_{\R^d}\int_{\R^d} |\partial^{\alpha}_{v} \partial^\beta_x \Gamma_{\bar a}(t,x,v;0)||x|^{r}|v|^{s}\,dvdx.
	\ee
	By \Cref{l.w09171}, we have
	\begin{equation}\notag
	\begin{split}
	I
	&=
	\det({A_0 P})^{-1/2}
	\int_{\R^d}\int_{\R^d} |\cP_{\alpha,\beta}
	\Gamma_{\bar a}(t,x,v;0)||x|^r|v|^s\,dvdx
	.
	\end{split}
	\end{equation}
		Using \Cref{l.w09032}, we notice that
	\begin{equation}\notag
	\begin{aligned}
		|\cP_{\alpha,\beta}|
			\lesssim \tilde \cP_{\alpha,\beta}\left( \frac{1}{\sqrt t}, \frac{x}{t^2}, \frac{v}{t}\right)
	\end{aligned}
	\end{equation}
	for some positive $|\alpha| + 3|\beta|$-homogeneous polynomial $\tilde \cP_{\alpha,\beta}$. 
	Therefore,  we get
\begin{equation}\notag
\begin{split}
	I
		&\lesssim
			\frac{1}{\sqrt{\det({A_0 P})}}
			\int_{\R^d}\int_{\R^d} \widetilde {\mathcal P}_{\alpha,\beta}\left(\frac{1}{\sqrt t}, \frac{x}{t^2}, \frac{v}{t}\right)			e^{-\frac{v^{T}A_0^{-1}v}{4}}
			e^{-(x-Mv)\cdot P^{-1}(x-Mv)}|x|^{r}|v|^{s}\,dvdx
		\\&=
			\frac{t^{\frac{3r}{2}+\frac{s}{2}}}{\sqrt{\det({A_0P})}}
			\int_{\R^d}\int_{\R^d} \widetilde {\mathcal P}_{\alpha,\beta}\left(\frac{1}{\sqrt t}, \frac{x}{t^2}, \frac{v}{t}\right)
			e^{-\frac{v\cdot A_0^{-1}v}{4}}
			e^{-(x-Mv)\cdot P^{-1}(x-Mv)} \frac{|x|^r}{t^\frac{3r}{2}} \frac{|v|^s}{t^\frac{s}{2}}\,dvdx.
	\end{split}
	\end{equation}
	Next, we change variables to find
	\be
		I\lesssim
			\frac{t^{\frac{3r}{2}+\frac{s}{2}-2d}}{\sqrt{\det({A_0P})}}
			\int_{\R^d}\int_{\R^d} \widetilde {\mathcal P}_{\alpha,\beta}\left(\frac{1}{\sqrt t}, \frac{\bar x}{\sqrt t}, \frac{\bar v}{\sqrt{t}}\right)
			e^{-t\frac{\bar v\cdot A_0^{-1}\bar v}{4}}
			e^{-t^3 (\bar x-t^{-1}M\bar v)\cdot P^{-1}(\bar x-t^{-1}M\bar v)} |\bar x|^r |\bar v|^s \, d\bar v d\bar x.
	\ee
	Notice that
	\be
		\widetilde {\mathcal P}_{\alpha,\beta}\left(\frac{1}{\sqrt t}, \frac{\bar x}{\sqrt t}, \frac{\bar v}{\sqrt{t}}\right)
			= t^{-\frac{|\alpha|+3|\beta|}{2}} 
				\widetilde {\mathcal P}_{\alpha,\beta}\left(1, \bar x, \bar v\right)
	\ee
	due to the homogeneity of $\widetilde{\mathcal{P}}$.  
	Hence,
		\be\notag
		I\lesssim
			\frac{t^{\frac{3r + s - |\alpha| - 3|\beta|}{2}-2d}}{\sqrt{\det({A_0 P})}}
			\int_{\R^d}\int_{\R^d} 
			\widetilde {\mathcal P}_{\alpha,\beta}\left(1,\bar v, \bar x\right)
			e^{-t\frac{\bar v\cdot A_0^{-1}\bar v}{4}}
			e^{-t^3 (\bar x-t^{-1}M\bar v)\cdot P^{-1}(\bar x-t^{-1}M\bar v)} |\bar x|^r |\bar v|^s \, d\bar v d\bar x.
	\ee
	
	We change variables one final time with $\bar y = \bar x - t^{-1} M \bar v$ to find
	\be
		\begin{split}
		I
			&\lesssim 
			\frac{t^{\frac{3r + s - |\alpha| - 3|\beta|}{2}-2d}}{\sqrt{\det({A_0 P})}}
			\int_{\R^d}\int_{\R^d} 
			\widetilde {\mathcal P}_{\alpha,\beta}\left(1,\bar v, \bar x\right)
			e^{-t\frac{\bar v\cdot A_0^{-1}\bar v}{4}}
			e^{-t^3 \bar y\cdot P^{-1}\bar y} |\bar y + t^{-1} M \bar v|^r |\bar v|^s \, d\bar v d\bar x
			\\&\lesssim
			\frac{t^{\frac{3r + s - |\alpha| - 3|\beta|}{2}-2d}}{\sqrt{\det({A_0 P})}}
			\int_{\R^d}\int_{\R^d} 
			\widetilde {\mathcal P}_{\alpha,\beta}\left(1,\bar v, \bar x\right)
			e^{-t\frac{\bar v\cdot A_0^{-1}\bar v}{4}}
			e^{-t^3 \bar y\cdot P^{-1}\bar y} (|\bar y|^r + |\bar v|^r) |\bar v|^s \, d\bar v d\bar x.
		\end{split}
	\ee
	In the last inequality we used \Cref{l.w09032} to bound $t^{-1} |M| \lesssim 1$.
	
	At this point, it follows from \Cref{l.w09032} that the quadratic terms in the exponential are bounded below as 
	\be\notag
		t \bar v\cdot A_0^{-1} \bar v
			+ t^3 \bar y\cdot P^{-1} \bar y
			\gtrsim |\bar v|^2 + |\bar y|^2.
	\ee
	The conclusion follows then from a simple calculation:
	\be\notag
		I
			\lesssim \frac{t^{\frac{3r + s - |\alpha|-3|\beta|}{2}-2d}}{\sqrt{\det({A_0P})}}.
	\ee
	The proof of this case is concluded after applying \Cref{l.w09032} again in order to bound the determinant.
	
	The case where $j>0$ reduces to the case above via the identity:
	\be\notag
		\partial_t \Gamma_{\bar a}
		= v \cdot \nabla_x \Gamma_{\bar a}
		+ \tr(a(t) D^2_v \Gamma_{\bar a}).
		\ee
	This concludes the proof of all cases where $(\xi_1,\xi_2,\xi_3) = 0$.

The general case can easily be handled as follows.  First change variables:
	\be\notag
		\begin{split}
		\int_{\R^d}\int_{\R^d}
			&\max_{(0,\xi_1,\xi_2) \in Q_{t/2}}|\partial^{j}_{t}\partial_{x}^{\beta}\partial_{v}^{\alpha}\Gamma_{\bar a}(t,x+\xi_1,v+\xi_2;0)|x|^{r}|v|^{s}\,dxdv
			\\&	= \int_{\R^d}\int_{\R^d}
			\max_{(0,\xi_1,\xi_2) \in Q_{t/2}}|\partial^{j}_{t}\partial_{x}^{\beta}\partial_{v}^{\alpha}\Gamma_{\bar a}(t,x,v;0)|x- \xi_1|^{r}|v-\xi_2|^{s}\,dxdv.
		\end{split}
	\ee
	Next, using the inequalities
	\be\notag
		|x- \xi_1|^{r}
			\lesssim |x|^r + |\xi_1|^r
		\quad\text{ and }\quad
		|v-\xi_2|^{s}
			\lesssim |v|^s + |\xi_2|^s.
	\ee
	At this point, the four resulting integrals may be estimated using the case above (keeping in mind the conditions $|\xi_1|\leq t^{3/2}$ and $|\xi_2| \leq \sqrt t$).  This concludes the proof.
\end{proof}

\subsubsection{The proof of \Cref{l.w09032}: understanding the matrices $A_i$, $P$, and $M$}\label{s.matrices}

\begin{proof}
	We note that the upper bounds in all cases (i), (ii), and (iii) are obvious from the assumptions on $\bar a$~\eqref{e.homogeneous_assumptions} and the definition of the matrices~\eqref{e.A_i}.  The lower bounds of $A_i$ in (ii) are also obvious.  Hence, we need only prove the lower bound for $P$ in (i).

	For ease, we set $\tilde t = 0$ and simply drop the ``$;0$'' notation from all quantities.
	
	To obtain this lower bound, notice that it suffices to establish a uniform bound of the form
	\be\label{e.c033009}
		w \cdot (P(t)w)
			\gtrsim t^3
	\ee
	for any vector $w\in \R^d$ with $|w| = 1$.  We proceed by analyzing the time derivative of $P$.  First,
	\begin{equation}\notag
	\begin{split}
	P'(t)
		&= (A_2 -A_1 A_0^{-1} A_1)'(t)
	\\&
	=
	t^2 \bar a-t\bar aA_0^{-1}A_1 +A_1 (A_0^{-1} \bar aA_0^{-1} ) A_1 -A_1 A_0^{-1}t\bar a
	\\&=
	(t\sqrt{\bar a}-A_1 A^{-1}\sqrt{\bar a})(t\sqrt{\bar a}-\sqrt{\bar a}A^{-1}A_1)
	= M^T \bar a M 
	\geq 0
	.
	\end{split}
	\end{equation}
	Thus,
	\be\label{e.c033007}
		w\cdot P(t)w
			= \int_0^t (M(s) w) \cdot \bar a M(s) w \, ds
			\geq \frac{1}{\Lambda}
				\int_0^t |M(s) w|^2 \, ds.
	\ee
	In order to establish~\eqref{e.c033009}, it is enough to show
	\be\label{e.c033006}
		\int_0^t |M(s) w|^2\, ds
			\gtrsim t^3.
	\ee
	This is our focus for the remainder of the proof.
	

	To obtain this lower bound, we use the following intuition. 
	Recall in equation \eqref{e.A_i}
		 \begin{equation}\notag
		\begin{split}
		&A_i(s)=\int_{0}^{s} \tau^i \bar a(\tau)\,d\tau,
		\quad\text{ and }\quad	
		M(s)
		= s\id - A_0^{-1}(s) A_1(s).
		\end{split}
		\end{equation} 
	The time derivative of $M$ is
	\be\label{e.c033005}
	\begin{split}
		M'(s)
			&= \id - (A_0^{-1} A_1)'(s)
			= \id + A_0(s)^{-1} \bar a(s) A_0^{-1}(s) A_1(s) - s A_0(s)^{-1} \bar a(s)
			\\&
			= \id - A_0(s)^{-1} \bar a(s)M(s).
	\end{split}
	\ee
	From~\eqref{e.c033005}, we see that when $M(s) w$ is `small,' that is $o(t)$, $M'(s) w$ is approximately $w$.  That is $M(s)w$ moves radially (in the direction of $w$) with a velocity $\approx 1$ away from the origin.  This means that, eventually, $M(s) w$ will move radially across a distance $O(t)$ at a bounded velocity.  This would yield the desired bound~\eqref{e.c033006}. 

	In order to make this rigorous, we proceed in three steps.  Fix $\eps >0$ sufficiently small in a way to be determined.  The first step is to note that either there is an interval $[\eps t, 2\eps t]$ where $M(s) w$ always has magnitude $O(t)$ or not.  If so, we are done.  If not, we proceed to the second step.  The second step takes a time $t_0$ in the interval $[\eps t, 2\eps t]$ in which $M(t_0) w$ is `small' and shows that it gets `big.'  The third step is to show that $M(t_0) w$ remains `big'.  The second and third steps are dependent on the time derivative of $M$.

%
	
	\smallskip
	
	{\bf Step one:} Notice that $|M(0) w| = 0$.  If
	\be
		|M(s) w| \geq \epsilon^4 t
			\qquad \text{ for all } s \in [\eps t, 2\eps t]
	\ee
	then we are finished with the proof.  Hence, assume that there is
	\be
		t_0 \in [\eps t, 2\eps t]
			\quad\text{ such that } |M(t_0) w| < \eps^3 t.
	\ee
	
	\smallskip
	
	{\bf Step two:} We claim that
	\be\label{e.c033001}
		t_1 := \inf \{ s > t_0 : |M(s) w| \geq \eps^3 t\} \leq 3 \eps t.
	\ee
	Roughly, $t_1$ is the first time after $t_0$ that $|M(s)w|$ becomes `big,' that is, has norm $\eps^3 t$.
	
	Using the time derivative of $M$~\eqref{e.c033005}, we obtain the identity
	\be\label{e.c033003}
		M(t_1) w
			= M(t_0) w
				+ \int_{t_0}^{t_1}\left(w - A_0(s)^{-1} a(s) M(s) w\right) \, ds.
	\ee
	
	Combining~\eqref{e.c033003} with the definition of $t_1$~\eqref{e.c033001} and the bound \Cref{l.w09032}.(ii), we find
	\be\label{e.c033004}
	\begin{split}
		t_1 - t_0
			&= (t_1 - t_0)|w|
			= \Big|\int_{t_0}^{t_1} w \,ds\Big|
			= \Big|M(t_1) w - M(t_0)w + \int_{t_0}^{t_1} A_0(s)^{-1} \bar a(s) M(s) w \,ds \Big|
			\\&
			\leq |M(t_1) w|
				+ |M(t_0) w|
				+ \int_{t_0}^{t_1} |A_0(s)^{-1} \bar a(s) M(s) w|\, ds
			\\&
			\leq 2 \eps^3 t
				+ \int_{t_0}^{t_1} \frac{C |M(s) w|}{s^2}  \, ds
			\leq 2 \eps^3 t
				+ \int_{t_0}^{t_1} \frac{C \eps^3 t}{s}  \, ds
			\leq 2 \eps^3 t + \frac{C \eps^3 t (t_1 - t_0)}{\eps t}.
	\end{split}
	\ee
	where $C$ is a universal constant depending only on $d$ and $\Lambda$.  The last inequality uses that $s \geq t_0 \geq \eps t$.  Before continuing, we note that the last integral in~\eqref{e.c033004} reveals the necessity of Step One, above.  Indeed, the final integral above is not bounded for $t_0$ near $0$. 
	Step One allows us to avoid this singularity. 
	
	Returning to~\eqref{e.c033004}, notice that, if $\eps$ is sufficiently small then $C\eps^2 < 1/2$.  Thus, after rearranging~\eqref{e.c033004}, we find
	\be\notag
		\frac{t_1-t_0}{2}
			\leq 2 \eps^3 t.
	\ee
	Rearranging this, recalling that $t_0 \leq 2 \eps t$, and further decreasing $\eps$, we obtain
	\be\notag
		t_1
			\leq t_0 + 4\eps^2 t
			< 2 \eps t + \eps t.
	\ee
	Hence~\eqref{e.c033001} is established.

	\smallskip
	
	{\bf Step Three:} We claim that
	\be\label{e.w05181}
		t_2
			:= \sup \{ s \in (t_1, t] : |M(s) w| \geq \eps^4 t\}
			\geq t_1 + \eps^4 t.
	\ee
	Roughly, $t_2$ is the first time after $t_1$ (at which time $|M(s) w|$ is `big') that $|M(s) w|$ becomes `small,' that is $\eps^4 t$.
	
	Before showing this, we claim this allows us to conclude.  Indeed, 
	\be\notag
		|M(s) w| \geq \eps^4 t
			\quad \text{ for } s \in (t_1,t_2)
			\qquad\text{ and }\qquad
		t_2 - t_1 \geq \eps^4 t.
	\ee
	Hence, 
	\be\notag
		\int_0^t |M(s) w|^2\, ds
			\geq	\int_{t_1}^{t_2} |M(s) w|^2\, ds
			\geq (t_2 - t_1) (\eps^4 t)^2
			\geq \epsilon^{12} t^3.
	\ee
	In view of~\eqref{e.c033007}, this establishes the claim~\eqref{e.c033006}, which concludes the proof.  Thus, it is enough to prove~\eqref{e.w05181}, which is our focus now.
	
	If $t_2 = t$, we are finished.  Hence, we assume that $t_2 < t$, which implies that
	\be\label{e.c033010}
		|M(t_2) w| = \eps^4 t.
	\ee
	Also, using~\eqref{e.c033005} once again, we find
	\be\notag
		M(t_1) w
			- M(t_2)w
			= -(t_2-t_1)w + \int_{t_1}^{t_2} A_0^{-1}(s) \bar a(s) M(s) w \, ds.
	\ee
	Combining the two identities above, and recalling from~\eqref{e.c033001} that $|M(t_1)w| = \eps^3 t$, we find
	\be\notag
		\eps^3 t(1 - \eps)
			\leq |M(t_1) w - M(t_2) w|
			\leq (t_2-t_1) 
				+ \int_{t_1}^{t_2} C \, ds
			= (C+1) (t_2- t_1).
	\ee
	Rearranging this and decreasing $\eps$ if necessary, we find~\eqref{e.w05181}.  This concludes the proof.
\end{proof}

\subsection{The second step: full Schauder estimates by perturbing off the homogeneous problem}\label{s.w05171}

By a careful procedure taking into account the natural scalings and available interpolations, we can perturb off the $(x,v)$-homogeneous problem in order to obtain the full Schauder estimates.  In short, we finish the proof of \Cref{t.Schauder} by leveraging \Cref{l.w12041}.

We begin by stating two important technical lemmas.  The proof of the first is given in \cite{LH} and the second is standard (it can be seen easily by scaling, for example), but a proof can be found in~\cite[Lemma~2.10]{imbert2018schauder}.

\begin{Lemma}[Lemma~4.3 in~\cite{LH}]\label{l.w12042}
	Let $\omega(r)>0$ be bounded in $[r_0,r_1]$ with $r_0 \geq 0$. Suppose that there is $\mu \in (0,1)$ and constants $A$, $B$, $p\geq 0$ so that, for all $r_0\leq r<R\leq r_1$,
	\begin{equation}\label{e.w12044}
	\begin{split}
		\omega(r)\leq \mu \omega(R)+\frac{A}{(R-r)^p}+B.
	\end{split}
	\end{equation}
	Then for any $r_0\leq r<R\leq r_1$, there holds
	\begin{equation}\label{e.w12045}
	\begin{split}
	\omega(r)\lesssim \frac{A}{(R-r)^p}+B
	,
	\end{split}
	\end{equation}
	where the implied constant depends only on $\mu$ and $p$.
\end{Lemma}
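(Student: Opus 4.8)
The plan is to use the standard iteration-by-geometric-interpolation (``hole-filling'') argument. Fix $r_0 \le r < R \le r_1$, and introduce a parameter $\tau \in (0,1)$ to be calibrated at the end in terms of $\mu$ and $p$ only. Define the increasing sequence of radii $\rho_i = r + (1-\tau^i)(R-r)$ for $i \ge 0$, so that $\rho_0 = r$, the $\rho_i$ increase to $R$ and stay inside $[r,R]$ (since $0<\tau^i\le 1$), and $\rho_{i+1} - \rho_i = \tau^i(1-\tau)(R-r)$.

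First I would apply the hypothesis~\eqref{e.w12044} to the pair $(\rho_i,\rho_{i+1})$ in place of $(r,R)$, obtaining
\begin{equation}
	\omega(\rho_i) \le \mu\,\omega(\rho_{i+1}) + \frac{A}{\tau^{ip}(1-\tau)^p(R-r)^p} + B.
\end{equation}
Iterating this bound $k$ times gives
\begin{equation}
	\omega(r) = \omega(\rho_0) \le \mu^k \omega(\rho_k) + \frac{A}{(1-\tau)^p(R-r)^p}\sum_{i=0}^{k-1}(\mu\tau^{-p})^i + B\sum_{i=0}^{k-1}\mu^i .
\end{equation}
Next I would choose $\tau$ so that $q := \mu\tau^{-p} < 1$: for instance $\tau = \mu^{1/(2p)}$ when $p>0$ (giving $q = \mu^{1/2}<1$), and any fixed $\tau\in(0,1)$, say $\tau=1/2$, when $p=0$. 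With such a $\tau$, both geometric series converge as $k\to\infty$, with sums bounded by constants depending only on $\mu$ and $p$; moreover the factor $(1-\tau)^{-p}$ depends only on $\mu$ and $p$. Since $\omega$ is bounded on $[r_0,r_1]$ and $\mu\in(0,1)$, the remainder $\mu^k\omega(\rho_k)\to 0$, and letting $k\to\infty$ yields
\begin{equation}
	\omega(r) \le \frac{1}{1-q}\cdot\frac{A}{(1-\tau)^p(R-r)^p} + \frac{B}{1-\mu},
\end{equation}
which is exactly~\eqref{e.w12045} with implied constant depending only on $\mu$ and $p$.

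There is no serious obstacle here; this is the classical iteration lemma (cf.~\cite{LH}). The only points requiring a little care are (i) keeping the interpolating radii inside $[r,R]$, which is automatic from the construction, and (ii) calibrating $\tau$ against $\mu$ and $p$ so that the series $\sum(\mu\tau^{-p})^i$ converges while the loss $(1-\tau)^{-p}$ from shrinking each step remains a constant depending only on $\mu$ and $p$. The boundedness hypothesis on $\omega$ is used solely to discard the tail $\mu^k\omega(\rho_k)$ in the limit.
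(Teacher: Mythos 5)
Your proof is correct and is the canonical iteration argument for this standard lemma; the paper itself does not reproduce a proof but cites [LH, Lemma 4.3], and your geometric-radii iteration with $\tau$ calibrated so that $\mu\tau^{-p}<1$ is exactly the usual way this is established (see, e.g., Giaquinta's iteration lemma).
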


\begin{Lemma}[Interpolation inequalities]\label{l.w12022}
Fix any $Q=Q_{r}$ for any $r \geq 1/2$ and any $\alpha \in (0,1)$.  For any $\epsilon>0$, the following hold: 
	\be
		\begin{split}
		[u]_{\calpha(Q)}
			&\lesssim
				\epsilon^{2}\left([u]_{C^{(2+\alpha)/3}_x(Q)} + [D^{2}_{v}u]_{C^{\alpha/3}_xC_v^\alpha(Q)}\right)
				+
				\epsilon^{-\alpha}\|u\|_{L^{\infty}(Q)},
	\\
		[D_{v}u]_{\calpha(Q)}
			&\lesssim
				\epsilon^{}\left([u]_{C^{(2+\alpha)/3}_x(Q)} + [D^{2}_{v}u]_{C^{\alpha/3}_xC_v^\alpha(Q)}\right)
				+
				\epsilon^{-\alpha-1}\|u\|_{L^{\infty}(Q)},
	\\
		\|D_v u\|_{L^{\infty}(Q)}
			&\lesssim
		\epsilon^{\alpha+1}[D^{2}_{v}u]_{C^{\alpha/3}_xC_v^\alpha(Q)}
		+
		\epsilon^{-1}\|u\|_{L^{\infty}(Q)},
		\qquad\text{ and}
	\\
	\|D_v^2 u\|_{L^{\infty}(Q)}
		&\lesssim
			\epsilon^{\alpha}[D^{2}_{v}u]_{C^{\alpha/3}_xC_v^\alpha(Q)}
			+
			\epsilon^{-2}\|u\|_{L^{\infty}(Q)}
	.
	\end{split}
	\end{equation}
\end{Lemma}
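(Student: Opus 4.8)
These are the standard anisotropic interpolation inequalities (see \cite[Lemma~2.10]{imbert2018schauder}); my plan is to prove all four bounds by elementary calculus in the $v$-variable, establishing the two pointwise bounds first and then bootstrapping to the two seminorm bounds. Throughout I take $\eps\in(0,1]$, the only range relevant in the sequel; alternatively one may reduce the general case to $\eps=1$ via the kinetic rescaling $u\mapsto u(\eps^2\cdot,\eps^3\cdot,\eps\cdot)$, which maps $Q_r$ to $Q_{r/\eps}$ and hence only needs the $\eps=1$ estimates on every $Q_\rho$, $\rho\ge 1/2$, with a constant independent of $\rho$ — precisely what the direct argument below delivers.

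\emph{Pointwise bounds (third and fourth inequalities).} Fix $z_0\in Q$, indices $i,j$, and a step $\eta\in(0,\min\{\eps,\tfrac14\}]$ so small that the finitely many points used below lie in $Q$ (near $\partial B_r$ one uses one-sided differences pointing into the ball). A second-order Taylor expansion of $u$ in $v$ along $e_i,e_j$ writes $D^2_{v_iv_j}u(z_0)$ as a second difference quotient of $u$ at scale $\eta$, of size $O(\eta^{-2}\|u\|_{L^\infty(Q)})$, plus a remainder bounded by the $v$-oscillation of $D^2_v u$ over a ball of radius $\eta$, i.e.\ $O(\eta^\alpha[D^2_v u]_{\calpha(Q)})$; taking $\eta=\min\{\eps,\tfrac14\}$ yields the fourth bound. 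The first-order version writes $D_{v_i}u(z_0)$ as a first difference quotient of $u$ at scale $\eps$ (size $O(\eps^{-1}\|u\|_{L^\infty(Q)})$) plus a remainder $O(\eps\|D^2_v u\|_{L^\infty(Q)})$; substituting the fourth bound for $\|D^2_v u\|_{L^\infty(Q)}$ yields the third.

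\emph{Seminorm bounds (first and second inequalities).} The competitors in these seminorms are pairs $z=(t,x,v),\,z'=(t,x',v')\in Q$ with equal time coordinate and $|x-x'|,|v-v'|<\tfrac12$, so $h:=|x-x'|^{1/3}+|v-v'|$ is bounded by a universal constant; split on $h\ge\eps$ versus $h<\eps$. If $h\ge\eps$: since $\alpha\le1$ forces $|x-x'|^{\alpha/3}+|v-v'|^\alpha\ge h^\alpha$, the difference quotient of $u$ (resp.\ of $D_v u$) is $\le 2h^{-\alpha}\|u\|_{L^\infty(Q)}\le 2\eps^{-\alpha}\|u\|_{L^\infty(Q)}$ (resp.\ $\le 2\eps^{-\alpha}\|D_v u\|_{L^\infty(Q)}$, into which we insert the third bound), giving the claimed right-hand sides. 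If $h<\eps$: moving from $z'$ to $z$ first in $v$, then in $x$, the $v$-increment of $u$ is $\le\|D_v u\|_{L^\infty(Q)}h$ and the $x$-increment is $\le[u]_{C^{(2+\alpha)/3}_x(Q)}h^{2+\alpha}$; dividing by $\approx h^\alpha$, using $h<\eps$, and inserting the third bound turns this into $\eps^2\big([u]_{C^{(2+\alpha)/3}_x(Q)}+[D^2_v u]_{\calpha(Q)}\big)+\eps^{-\alpha}\|u\|_{L^\infty(Q)}$, i.e.\ the first inequality. For the second inequality with $h<\eps$, the $v$-increment of $D_v u$ is $\le\|D^2_v u\|_{L^\infty(Q)}h$, while its $x$-increment — not controlled directly by the hypotheses — is handled by a finite-difference-in-$v$ identity expressing $D_{v_i}u(t,x,v)-D_{v_i}u(t,x',v)$ through $x$-increments of $u$ (controlled by $[u]_{C^{(2+\alpha)/3}_x(Q)}$) and of $D^2_{v_iv_i}u$ (controlled by $[D^2_v u]_{\calpha(Q)}$) with a free step, which upon optimizing the step is $\lesssim\big([u]_{C^{(2+\alpha)/3}_x(Q)}+[D^2_v u]_{\calpha(Q)}\big)|x-x'|^{(1+\alpha)/3}$; dividing by $\approx h^\alpha$, using $h<\eps$, and inserting the fourth bound for $\|D^2_v u\|_{L^\infty(Q)}$ gives $\eps\big([u]_{C^{(2+\alpha)/3}_x(Q)}+[D^2_v u]_{\calpha(Q)}\big)+\eps^{-1-\alpha}\|u\|_{L^\infty(Q)}$, the second inequality.

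\emph{Main obstacle.} None of this is deep; the only mildly non-routine point is that $D_v u$ is controlled neither in $x$ nor by itself through the hypotheses, so in the $h<\eps$ case of the second inequality one must manufacture a quantitative $x$-modulus for $D_v u$ from $[u]_{C^{(2+\alpha)/3}_x}$ and $[D^2_v u]_{\calpha}$ via the finite-difference-in-$v$ step and the choice of its increment. Everything else is bookkeeping the anisotropic (kinetic) scaling and, near $\partial B_r$, replacing centered differences by one-sided ones.
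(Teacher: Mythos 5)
The paper does not prove this lemma itself; it simply cites \cite[Lemma~2.10]{imbert2018schauder} and remarks that the result "can be seen easily by scaling." Your proposal is therefore necessarily a different route: a self-contained direct proof. Your scaffolding is sound and essentially what the standard proofs do: establish the two pointwise bounds by first- and second-order Taylor/finite-difference quotients in $v$, then handle the two seminorm bounds by splitting on $h = |x-x'|^{1/3} + |v-v'|$ versus $\eps$, using $|x-x'|^{\alpha/3} + |v-v'|^{\alpha} \ge h^{\alpha}$ for $\alpha \le 1$, and — the one genuinely non-routine point, which you identify correctly — manufacturing an $x$-modulus of continuity for $D_v u$ out of $[u]_{C^{(2+\alpha)/3}_x}$ and $[D^2_v u]_{\calpha}$ via a difference identity in $v$ with a free step $\eta \approx |x-x'|^{1/3}$. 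The arithmetic in all the steps I checked closes correctly, and the restriction to $\eps \le 1$ is harmless for the way the lemma is invoked in the paper.

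The one place your argument is too casual is the boundary-of-$B_r$ issue, which you dispatch with "near $\partial B_r$ one uses one-sided differences pointing into the ball." This does not work as literally stated for a ball with coordinate-direction increments: if $|v_0|$ is close to $r$ and $v_0$ is nearly orthogonal to $e_i$, then both $v_0 + \eta e_i$ and $v_0 - \eta e_i$ lie outside $B_r$, since $|v_0 \pm \eta e_i|^2 = |v_0|^2 + \eta^2 \pm 2\eta v_{0,i} \approx |v_0|^2 + \eta^2 > r^2$. The standard, still-elementary fix is to replace $v_0$ by a slightly rescaled point $v_1 = (1 - c\eta/r)v_0$ (so $|v_1| \le r - c\eta$), perform the finite differences around $v_1$, and account for the shift by $|D^2_v u(z_0) - D^2_v u(z_1)| \le [D^2_v u]_{\calpha} |v_0 - v_1|^{\alpha} \lesssim \eta^{\alpha}[D^2_v u]_{\calpha}$, which merely changes constants; an analogous shift is needed in the $\eta$-step used in the second seminorm bound. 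With that repair your proof is complete and yields a cleaner self-contained argument than the paper's appeal to an external reference.
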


With these in hand, we now prove the full Schauder estimates.
\begin{proof}[Proof of \Cref{t.Schauder}]
%
%
	We estimate $[D_{v}^{2}u]_{\calpha(Q_{1/2})}$ and omit the proof of the other terms as they are similar.
	For succinctness, in this proof, we use the following notation:
	\begin{equation}\label{e.w03312}
		[u]'_{2+\alpha,r}
			:=[D^{2}_{v}u]_{\calpha(Q_r)}
				+[u]_{C^{(\alpha+2)/3}_x(Q_r)}.
	\end{equation}

	The key estimate that we establish is the following.  There is $\eps_0>0$ sufficiently small so that, with
	\be\label{e.theta_0}
		\theta_0 := \min\Big\{\frac{1}{8}, \eps_0 [\bar a]_{\calpha(Q_1)}^{-1/\alpha}\Big\}
	\ee
	then
	\be\label{e.c51201}
	\begin{split}
	[f]_{2+\alpha,r}'
		\leq &\frac{1}{2} [f]_{2+\alpha, r+2\theta}'
				+ C([\bar c]_{\calpha(Q_1)} + \theta^{-2-\alpha}) \|f\|_{L^\infty(Q_1)}
				+ C\theta^{-\alpha} \|g\|_{\calpha(Q_1)},
	\end{split}
	\ee
	for some $C>0$ and all $\theta \in (0,\theta_0)$ and $r \in [1/4, 3/4]$.  
	The proof of~\eqref{e.c51201} is complicated, so we postpone it until after show how \Cref{t.Schauder} follows from it.

	In order to prove \Cref{t.Schauder} from~\eqref{e.c51201}, we first rewrite~\eqref{e.c51201} in a manner more adapted to \Cref{l.w12042}.  Indeed, applying \Cref{l.w12042} with,  in its notation, the choices
	\be
		\begin{split}
		&r_1 = 1/4,
		\quad
		r_2 = 3/4,
		\quad
		\omega(r) = [u]_{2+\alpha,Q_r}',
		\quad
		R = r + 2\theta_0,
		\quad
		\mu = \frac{1}{2},
		\\&
		A = \|f\|_{L^\infty(Q_1)} + \theta_0^2 [g]_{\calpha(Q_1)},
		\quad
		B = C[\bar c]_{\alpha(Q_1)} \|f\|_{L^\infty(Q_1)},
		\quad\text{ and }\quad
		p = 2 + \alpha,
		\end{split}
	\ee
	yields
	\begin{equation}\label{e.w12052}
	\begin{split}
		&[u]'_{2+\alpha,1/2}
			\lesssim
				\theta_0^{-2-\alpha}
					\left(
						\|f\|_{L^\infty(Q_1)}
						+ \theta_0^2 [g]_{\calpha(Q_1)}
					\right)
				+ [\bar c]_{\calpha(Q_1)}\|f\|_{L^\infty(Q_1)}
			\\&\quad
			\lesssim \left(1+ [\bar c]_{\calpha(Q_1)} + [\bar a]_{\calpha(Q_1)}^{1+ \frac{2}{\alpha}}\right) \|f\|_{L^\infty(Q_1)}
				+ \left(1 + [\bar a]_{\calpha(Q_1)}\right)
				[g]_{\calpha(Q_1)}.
	\end{split}
	\end{equation}
	Thus, \Cref{t.Schauder} is proved, up to establishing~\eqref{e.c51201}.

	We now prove~\eqref{e.c51201}.  We argue under the assumption that
	\be\notag
		[D^2_v f]_{\calpha(Q_r)}
			\geq [f]_{C^{(2+\alpha)/3}_x(Q_r)}
	\ee
	so that
	\be\notag
		[f]_{2+\alpha, r}'
			\leq 2 [D^2_v f]_{\calpha(Q_r)},
	\ee
	although the proof is similar in the opposite case. 
	
	Fix $z_0,z_1 \in Q_r$ with $t_0 = t_1$ so that
	\be\notag
		\frac{|D^2_v f(z_0) - D^2_v f(z_1)|}{|x_0-x_1|^{\alpha/3} + |v_0 - v_1|^\alpha}
			\geq \frac{1}{2} [D^2_v f]_{\calpha(Q_r)}
			\geq \frac{1}{4} [f]_{2+\alpha,Q_r}'.
	\ee
	Up to a change of variables, we may assume that $z_1 = 0$, which make the expressions in the sequel simpler.

	Fix $\theta \in (0,\theta_0)$.  If
	\be\label{e.c51101}
		|x_0|^{\alpha/3} + |v_0|^\alpha
			> \theta,
	\ee
	then we have, using \Cref{l.w12022}, 
	\be\notag
		\begin{split}
		[f]_{2+\alpha,Q_r}'
			&\lesssim \frac{|D^2_v f(z_0) - D^2_v f(0)|}{|x-x'|^{\alpha/3} + |v_0|^\alpha}
			\lesssim \theta^{-\alpha} \|D^2_v f\|_{L^\infty(Q_r)}
			\\&
			\leq \frac{1}{2} [ D^2_v f]_{\calpha(Q_r)} + C\theta^{-2-\alpha} \|f\|_{L^\infty(Q_r)},
		\end{split}
	\ee
	and~\eqref{e.c51201} is proved.
	
	Next we consider the case when~\eqref{e.c51101} does not hold.   We introduce a cut-off function $0 \leq \chi\leq 1$ such that
	\be\notag
		\chi(t,x, v)
			=\begin{cases}
				1 \qquad&\text{ if } |t|^{1/2} + |x|^{1/3} + |v| \leq \theta,\\
				0 \qquad&\text{ if } |t|^{1/2} + |x|^{1/3} + |v| \geq 2\theta,
			\end{cases}
	\ee
	and
	\begin{equation}\label{e.w12031}
		[(\partial_t +v\cdot \nabla_x)\chi]_{\calpha(Q_1)}+[D_{v}^{2}\chi]_{\calpha(Q_1)}
	\lesssim \theta^{-2-\alpha}
	.
	\end{equation}
	We note that estimates on the other norms and semi-norms of $\chi$ can be obtained easily via \Cref{l.w12022}. 
	Additionally, to make the notation simpler, we define
	\be\notag
		\tilde a(t)
			= \bar a(t,0,0)
		\qquad\text{and}\qquad
		L = \partial_t + v\cdot\nabla_x - \tr(\bar aD^2_v\cdot).
	\ee

	By \Cref{l.w12041}, we have
	\begin{equation}\label{e.w12041}
	\begin{split}
	[D_{v}^{2}f]_{\calpha(Q_r)}
	&\leq
	[D^2_v(\chi f)]_{\calpha(Q_1)} 
	\lesssim [\partial_t(\chi f) + v\cdot\nabla_x(\chi f)
		- \tr(\tilde a D^2_v (\chi f))]_{\calpha(Q_1)}
	\\&
	\lesssim
	[L(\chi f)]_{\calpha(Q_1)}
	+
	[\tr((\bar a - \tilde a)D^2_v(\chi f))]_{\calpha(Q_1)}.
	\end{split}
	\end{equation}
	
	We consider the first term on the right hand side of~\eqref{e.w12041}.  Using the equation for $u$, we see
	\begin{equation}\notag
		L(\chi f)
			=
			f L \chi
			- 2\bar a\nabla_{v}f\nabla_{v}\chi
			+ \bar c \chi f
			+\chi g.
	\end{equation}
	Thus, by the triangle inequality
	\begin{equation}\notag
	\begin{split}
	&[L (\chi f)
		]_{\calpha(Q_1)}
	\\&\leq
	[\chi g]_{\calpha(Q_1)} + [fL\chi]_{\calpha(Q_1)} 
	+[\bar c\chi f]_{\calpha(Q_1)}
	+
	2[\bar a\nabla_{v}f\nabla_{v}\chi]_{\calpha(Q_1)}
	\\&=
	I_1 +I_2 +I_3+I_4.
	\end{split}
	\end{equation}
	
	For $I_1$, we use the boundedness of the cut-off function and have
	\begin{equation}\notag
	\begin{split}
	I_1
	&\lesssim
		\theta^{-\alpha}\|g\|_{\calpha(Q_1)}.
	\end{split}
	\end{equation}
	
	Next we consider $I_2$. Keeping in mind the support of $\chi$ and using the interpolation inequality \Cref{l.w12022} and \eqref{e.w12031} yields
	\be\notag
		\begin{split}
		&I_2
		\lesssim
		[f]_{\calpha(Q_{r+2\theta})} 
		\|L\chi\|_{L^{\infty}(Q_{r+2\theta})} 
		+
		[L\chi]_{\calpha(Q_{r+2\theta})} 
		\|f\|_{L^{\infty}(Q_{r+2\theta})} 
		\\&\lesssim
		\theta^{-2}
		[f]_{\calpha(Q_{r+2\theta})}
		+
		(\theta^{-2-\alpha}
		+ [\bar a D^2_v \chi]_{\calpha(Q_{r+2\theta})})
		\|f\|_{L^{\infty}(Q_{r+2\theta})}.
	\end{split}
	\ee
	Using again \Cref{l.w12022} and~\eqref{e.w12031}, we have
	\be\notag
	\begin{split}
		[\bar a D^2_v \chi]_{\calpha(Q_{r+2\theta})}
			&\leq 
				[\bar a]_{\calpha(Q_{r+2\theta})} \| D^2_v \chi\|_{L^\infty(Q_{r+2\theta})}
				+  \|\bar a\|_{L^\infty(Q_{r+2\theta})} [ D^2_v \chi]_{\calpha(Q_{r+2\theta})}
			\\&\lesssim
				\theta^{-2} [\bar a]_{\calpha(Q_{r+2\theta})}
				+ \theta^{-2-\alpha}. 
	\end{split}
	\ee
	and, for $\eps>0$ to be chosen depending only on $d$, $\alpha$, and $\Lambda$ (recall~\eqref{e.ellipticity}),
	\be\notag
		\theta^{-2} [f]_{\calpha(Q_{r+2\theta})}
			\lesssim \eps [f]_{2+\alpha, Q_{r+2\theta}}' + \theta^{-2 - \alpha}\|f\|_{L^\infty(Q_1)}.
	\ee
	Note that, as $\eps$ will not be chosen to depend on $[\bar a]_{\calpha(Q_1)}$ or $\theta$, we omit all negative powers of $\eps$.

	Therefore, we conclude that
	\be\notag
		I_2
			\lesssim \eps[f]_{2+\alpha,Q_{r+2\theta}}' + (\theta^{-2} [\bar a]_{\calpha(Q_{r+2\theta})} + \theta^{-2 - \alpha})\|f\|_{L^\infty(Q_1)}.
	\ee
	
	The terms $I_3$ and $I_4$ may be handled similarly to obtain
	\be\notag
		I_3
			\lesssim ([\bar c]_{\calpha(Q_1)} + \theta^{-\alpha}) \|f\|_{L^\infty(Q_1)}
				+ \eps [f]'_{2+\alpha,Q_{r+2\theta}}
	\ee
	and
	\be\notag
		\begin{split}
		I_4
			&\lesssim ([a]_{\calpha(Q_1)}  \theta^{-1}
				+ 
				\theta^{-1-\alpha}
				)
				\|\nabla_v f\|_{L^\infty(Q_{r+2\theta})}
				+ \theta^{-1} [\nabla_v f]_{\calpha(Q_{r+2\theta})}
			\\&
			\lesssim ([\bar a]_{\calpha(Q_1)}  \theta^{-1}
				+ 
				\theta^{-1-\alpha}
				)
				(\eps \theta^{1+\alpha}[D^2_vf]_{\calpha(Q_{r+2\theta})} + 
				\theta^{-1}\|f\|_{L^\infty(Q_{r+2\theta})})
				\\&\qquad
				+ \theta^{-1} (
					\eps \theta [D^2_v f]_{\calpha(Q_{r+2\theta})}
					+
					\theta^{-1-\alpha}\|f\|_{\calpha(Q_{r+2\theta})}
				)
			\\&\lesssim
				(\eps + \theta^\alpha [\bar a]_{\calpha(Q_1)}) [D^2_v f]_{\calpha(Q_{r+2\theta})}
				+ 
				(\theta^{-2} [\bar a]_{\calpha(Q_1)}
					+ \theta^{-2-\alpha}) \|f\|_{L^\infty(Q_1)}.
		\end{split}
	\ee
	Hence, we conclude that
	\be\label{e.c51102}
		\begin{split}
		&[L(\chi f)]_{\calpha(Q_1)}
			\lesssim (\eps + \theta^\alpha [\bar a]_{\calpha(Q_1)}) [f]_{2+\alpha,Q_{r+2\theta}}'
				\\&\qquad
				+ ([c]_{\calpha(Q_1)} + \theta^{-2} [\bar a]_{\calpha(Q_1)}+ \theta^{-2-\alpha}) \|f\|_{L^\infty(Q_1)}
				+ \theta^{-\alpha} \|g\|_{\calpha(Q_1)}.
		\end{split}
	\ee

We now consider the second term on the right hand side of~\eqref{e.w12041}.  We begin with the usual splitting:
\be\label{e.c51204}
	\begin{split}
	[\tr(\bar a-\tilde a) D^2_v(\chi f)]_{\calpha(Q_1)}
		\lesssim &\|\bar a-\tilde a\|_{L^\infty(\supp(\chi))} [D^2_v(\chi f)]_{\calpha(Q_1)}
			\\&
			+ [\bar a-\tilde a]_{\calpha(Q_1)} \|D^2_v(\chi f)\|_{L^\infty(Q_1)}.
	\end{split}
\ee
The second term in~\eqref{e.c51204} can be handled easily using the methods above (recall \Cref{l.w12022} and \eqref{e.w12031}):
\be\notag
	[\tr(\bar a-\tilde a)]_{\calpha(Q_1)} \|D^2_v(\chi f)\|_{L^\infty(Q_1)}
		\lesssim [\bar a]_{\calpha(Q_1)}
			\left(
				\theta^\alpha [f]_{2+\alpha, Q_{r+2\theta}}'
				+ \theta^{-2} \|f\|_{L^\infty(Q_1)}
			\right).
\ee
Estimating the first term in~\eqref{e.c51204} uses the fact that $\chi$ has support of size $\theta$:
\be\notag
	\|\bar a-\tilde a\|_{L^\infty(\supp(\chi))}
		\lesssim \theta^\alpha [\bar a]_{\calpha(Q_1)}.
\ee
After applying \Cref{l.w12022} and \eqref{e.w12031}, we arrive at
\be\notag
	\begin{split}
	\|\bar a-\tilde a\|_{L^\infty(\supp(\chi))} &[D^2_v(\chi f)]_{\calpha(Q_1)}
		\lesssim \theta^\alpha [\bar a]_{\calpha(Q_1)}
			\left(
				[f]_{2+\alpha,Q_{r+2\theta}}'
				+ \theta^{-2} \|f\|_{L^\infty(Q_1)}
			\right).
	\end{split}
\ee
Therefore, we obtain the following bound on the second term on the right hand side of~\eqref{e.w12041}:
\be\notag
	[\tr(\bar a-\tilde a) D^2_v(\chi f)]_{\calpha(Q_1)}
		\lesssim \theta^\alpha [\bar a]_{\calpha(Q_1)}
			\left(
				[f]_{2+\alpha,Q_{r+2\theta}}'
				+ \theta^{-2} \|f\|_{L^\infty(Q_1)}
			\right).
\ee

Combining all above estimates, we have
\be\label{e.c51205}
	\begin{split}
		&[f]_{2+\alpha, r}'
			\leq C(\eps + \theta^\alpha [\bar a]_{\calpha(Q_1)}) [f]_{2+\alpha,Q_{r+2\theta}}'
				\\&\qquad
				+ C([\bar c]_{\calpha(Q_1)} + \theta^{-2} [\bar a]_{\calpha(Q_1)}+ \theta^{-2-\alpha}) \|f\|_{L^\infty(Q_1)}
				+ C\theta^{-\alpha} \|g\|_{\calpha(Q_1)},
		\end{split}
\ee
where $C$ is some universal constant.
Choosing $\eps$ and $\eps_0$ sufficiently small and recalling the definition of $\theta_0$~\eqref{e.theta_0} and that $\theta < \theta_0$, we have
\be\notag
	C(\eps + \theta^\alpha)[\bar a]_{\calpha(Q_1)}
		\leq \frac{1}{2}
	\quad\text{ and }\quad
	\theta^{-2} [\bar a]_{\calpha(Q_1)}
		\lesssim \theta^{-2-\alpha}.
\ee
Using this in~\eqref{e.c51205}, we obtain~\eqref{e.c51201}, which concludes the proof.
\end{proof}


\section{Uniqueness for the Landau equation: \Cref{t.Landau}} \label{s.Landau}

Before beginning the proof, we review a few useful bounds that follow from our assumptions.  For any $h \in L^{\infty,5+\gamma + \eta}$ for any $\eta>0$, we have
\be\label{e.bar_a_above}
	e \cdot \bar a^h(t,x,v) e
		\lesssim
		\|h\|_{L^{\infty,5+\gamma + \eta}}
		\begin{cases}
			\vv^\gamma
				\qquad &\text{ if } e \parallel v,\\
			\vv^{2+\gamma}
				\qquad &\text{ otherwise,}
		\end{cases}
\ee
for any $e \in \mathbb{S}^2$.  The lower order coefficient $\bar c^h$ satisfies a similar bound
\be\label{e.bar_c}
	0 \leq \bar c^h(t,x,v)
		\lesssim \vv^\gamma\|h\|_{L^{\infty,3}}.
\ee
These bounds are not optimal in the weight; it is clear that~$3$ can be replaced by any $k > 3+\gamma$.  The proofs of~\eqref{e.bar_a_above} and~\eqref{e.bar_c} are straightforward but can be seen in \cite[Lemma~2.1]{HST2019rough}.

Finally, due to the assumption~\eqref{e.nondegeneracy}, the solution $f$ that is the subject of \Cref{t.Landau} satisfies a matching lower bound to~\eqref{e.bar_a_above}:
\be\label{e.bar_a_below}
	e \cdot \bar a^f(t,x,v) e
		\gtrsim
		\begin{cases}
			\vv^{2+\gamma}
				\qquad &\text{ if } e \perp v,\\
			\vv^\gamma
				\qquad &\text{ otherwise.}
		\end{cases}
\ee
Here we suppress the explicit dependence on $f$ as it depends in a complicated way on $\delta$, $r$, $R$, and $\|f\|_{L^{\infty,k}}$.  This inequality~\eqref{e.bar_a_below} follows from \cite[Lemma~2.5]{HST2019rough}.

We now state the main quantitative estimate that allows us to deduce uniqueness (\Cref{t.Landau}), which is postponed until \Cref{s.Hessian}.  This lemma requires \Cref{t.Schauder} in a crucial way.

\begin{Proposition}\label{l.Hessian}
	Under the assumptions of \Cref{t.Landau}, there are $\alpha' \in (0,\alpha)$ and $\theta'\in(0,\theta)$ so that
	\be
		\frac{\theta'}{2} \frac{\alpha'}{2+\alpha'}
			> 1,
	\ee
	and $T_0 < 1/2$ such that, for any $t\in [0,T_0]$,
	\be
		\|\vv^7 D^2_v f(t)\|_{L^\infty(\R^6)}
			\lesssim \frac{1}{t \log(\frac{1}{t})^{\frac{\theta'}{2} \frac{\alpha'}{2+\alpha'}}}.
	\ee
	The final time $T_0$ depends only on $\alpha$, $\theta$, $k$, $\|f_0\|_{L^{\infty,k}}$, and $\|f_0\|_{\logalpha}$.
\end{Proposition}


With this in hand, we are in position to prove the second main theorem, the uniqueness of solutions to the Landau equation with initial data having H\"older regularity in $x$ and $\log$-H\"older regularity in $v$.  While the estimate of \Cref{l.Hessian} is different from its analogue in~\cite[Lemma 4.3 and Proposition 4.4]{HST2019rough}, its application in deducing uniqueness is quite similar to the proof of uniqueness in~\cite{HST2019rough}.  However, we provide the proof as some technical details must be altered.

\begin{proof}[Proof of \Cref{t.Landau}]
	For succinctness, we set
	\be
		\ell = 5 + \gamma + \eta,
	\ee
	and, without loss of generality, we may assume that
	\be\label{e.c52511}
		\ell \leq 5.
	\ee
	
	Let $r \in C(0,T_0] \cap L^1[0, T_0]$ be a positive function to be determined and define
	\begin{equation}
	w=e^{-\int_{0}^{t}r(s)\,ds}(g-f)
	\quad\text{ and }\quad
	W = \vv^{2\ell} w^2.
	\end{equation}
	
	Our goal is to show that $W \equiv 0$ as this immediately implies that $f\equiv g$.   We proceed by contradiction, assuming that there is $\epsilon>0$ such that
	\begin{equation}\label{e.c033011}
	\sup_{[0,T_0]\times \R^6} W(t,x,v)> \eps.
	\end{equation}
	Following the work in \cite[Proposition 5.2]{HST2019rough}, we may find a point $z_\eps = (t_\eps, x_\eps, z_\eps)$ with $t_\eps>0$ such that
	\be\label{e.W_max}
	W(z_\eps)
	= \eps
	\quad\text{ and }\quad
	\sup_{[0,t_\eps]\times \R^6} W(t,x,v) \leq \eps.
	\ee

	%
	%
	%
	%
	%
	%
	
	Next, a direct computation from equation \eqref{e.Landau} yields 
\begin{equation}\label{e.w03012}
\begin{split}
	\partial_t W+ v\cdot \nabla_x W
		= &\tr{(\bar a^g D_{v}^{2}W)}
			- \frac{1}{2} W^{-1} \nabla_v W \cdot (\bar a^g \nabla_v W)
			+ 2\ell \vv^{-2} v \cdot (\bar a^g \nabla_v W)
		\\&+
			\big[ 2\ell(\ell+2) \vv^{-4}v\cdot (\bar a^g v)
				-2\ell \vv^{-2}\tr{(\bar a^g)}+ 2\bar c^g \big]W
		\\&
			+ 2 \vv^{2\ell} w\tr{(\bar a^w D_{v}^{2}f)}
			+ 2\vv^{2\ell} w \bar c^w f
			-2rW
.
\end{split}
\end{equation}
It is in this step that we need the technical condition $W(z_\eps) > 0$; indeed, otherwise the second term on the right hand side would not be well-defined.


	We notice three things.  First, as $z_\epsilon$ is a maximum point (recall~\eqref{e.W_max}), it follows that, at $z_\eps$,
	\be\label{e.c52409}
	\nabla_v W=0,
	\quad
	D_{v}^{2}W\leq 0,
	\quad\text{ and }\quad
	(\partial_t +v\cdot \nabla_x )W\geq 0.
	\ee
	Second, we have that,
	\be\label{e.c033012}
	\|w\|_{L^{\infty,\ell}([0,t_\epsilon]\times \R^6)}
	= W(z_\epsilon).
	\ee
	%
	%
	At this point, we drop the indication of the domain from the $L^\infty$-norms as it will always be $[0,t_\epsilon]\times \R^6$.
	
	Next, after using~\eqref{e.bar_a_above} and~\eqref{e.bar_c} to bound the $\bar a^g$ and $\bar c^g$ terms in~\eqref{e.w03012} and using~\eqref{e.c52409} to remove several other terms, 
	we obtain, at $z_\eps$,
	\begin{equation}\label{e.w03015}
	\begin{split}
	2rW 
	&\lesssim
	W+\vv^{2\ell}|w||\bar a^w||D_{v}^{2}f|
	+
	\vv^{2\ell}|w||\bar c^w|f
	.
	\end{split}
	\end{equation}
	Recalling~\eqref{e.c033012} and~\eqref{e.bar_a_above}, we have, at $z_\epsilon$,
	\begin{equation}\label{e.w03013}
	\begin{split}
	|\bar a^w|
	\lesssim
	\vv^{(\gamma+2)_+}
	\|w\|_{L^{\infty,\ell}}
	=
	\vv^{(\gamma+2)_+}
	\sqrt W
	\end{split}
	\end{equation}
	and, by~\eqref{e.bar_c},
	\begin{equation}\label{e.w03014}
	\begin{split}
	|\bar c^w|
	\lesssim
	\vv^{\gamma}
	\|w\|_{L^{\infty,\ell}}
	=
	\vv^\gamma \sqrt W
	.
	\end{split}
	\end{equation}
	Plugging the estimates~\eqref{e.w03013} and~\eqref{e.w03014} into~\eqref{e.w03015} yields
	\begin{equation}\label{e.w03016}
	\begin{split}
	rW 
	&\lesssim
	W+ \vv^{\ell + (2+\gamma)_+} W |D_{v}^{2}f|
	+
	\vv^\ell W f
	\lesssim \left(1 + \vv^{\ell + (2+\gamma)_+}|D_v^2 f|\right) W.
	\end{split}
	\end{equation}
	Above we used that $\|f\|_{L^{\infty,\ell}}$ is bounded.  Applying \Cref{l.Hessian} and using~\eqref{e.c52511}, this becomes
	\be\notag
	rW
	\leq C_0
	\left(1+\frac{1}{t_\epsilon \log(\frac{1}{t_\epsilon})^{\frac{\theta'}{2}\frac{\alpha'}{2+\alpha'}}}
	\right)
	W
	\ee
	for some $C_0 >0$.  We note that this is where the restriction to $[0,T_0]$ is inherited from \Cref{l.Hessian}.

	Choosing 
	\begin{equation}\notag
	r(t)
	= 2C_0
	\left(1+\frac{1}{t \log(\frac{1}{t})^{\frac{\theta'}{2}\frac{\alpha'}{2+\alpha'}}}
	\right)
	\end{equation}
	contradicts \eqref{e.w03016}.  The condition that
	\be\notag
		\frac{\theta'}{2}\frac{\alpha'}{2+\alpha'} > 1
	\ee
	ensures that $r \in L^1([0,T_0])$, as desired.
	
	Therefore, this rules out the existence of  $z_\epsilon$.  We conclude that
	\be\notag
	\sup W < \epsilon.
	\ee
	As $\epsilon$ is arbitrary in the above argument, we deduce that $W\equiv 0$. Thus, $g=f$.
	
	It remains to address the case when $f_{\rm in} \in L^{\infty,k}$ for all $k$.  Here, however, the arguments of \cite[Theorem~1.4]{HST2019rough} directly apply.  Indeed, these arguments are based on showing that $f(T_1) \in C^{\alpha/3}_x C^\alpha_v(\R^6)$ and lies in $L^{\infty,k}$ for all $k$, which do not require the stronger smoothness assumptions of \cite[Theorem~1.2]{HST2019rough}.  The idea is to then re-apply the uniqueness argument on an interval starting at $T_1$.  Hence, we deduce that uniqueness on the entire time interval $[0,T]$.
\end{proof}

\subsection{A $t$-integrable bound on $\|D^2_v f(t)\|_{L^\infty_{x,v}}$: proof of \Cref{l.Hessian}}\label{s.Hessian}

We now state a more precise estimate that immediately yields \Cref{l.Hessian}.  It establishes a bound on $D^2_v f$ at the same time as one on the $\logalpha$-norm of $f$.  
In the sequel we refer to these as a Hessian bound and as propagation of regularity, respectively.  It is interesting to note that, although the latter is a `hyperbolic' estimate (that is, it does not involve a {\em gain of regularity}), it is dependent on the Schauder estimates in an essential way.

\begin{Proposition}
	\label{p.w02091}
	Let $f\in L^{\infty, k }([0,T]\times \R^6)$ be the solution constructed in \cite[Theorem~1.2]{HST2019rough} with the nondegeneracy condition~\eqref{e.nondegeneracy}.  Fix any $\theta > 0$, $\alpha \in (0,1)$, and $\mu < 1$ such that
	\be
		\frac{\mu\theta}{2}  \frac{\mu\alpha}{2+\mu \alpha}
			> 1
	\ee
	Then, for any $m> 5+\gamma$ and any $k$ sufficiently large depending on $m$, $\alpha$, $\theta$, and $\mu$, there exists a time $T_0 \leq \min\{1/2, T\}$ such that
	\begin{equation}
	\begin{split}
		\sup_{t\in[0,T_0]}&\left(t\left(\log\tfrac{1}{t}\right)^{\frac{\mu\theta}{2}\frac{\mu\alpha}{2+\mu\alpha}}\|D^2_v f\|_{L^{\infty,m+(2+\gamma)_+}([t/2,t]\times\R^6)}\right)^\frac{\mu\alpha}{2+\mu\alpha},
		\\&
		\|\vv^{m}f\|_{\logalphamu ([0,T_0]\times \R^6)}
		\lesssim
		1+
		\|f_{\text{in}}\|_{\logalpha ( \R^6)}.
	\end{split}
	\end{equation}
	As in \Cref{l.w02102}, the implied constant depends on $\|f\|_{L^{\infty,k}([0,T_0]\times\R^6)}$.  The final time $T_0$ depends only on $\alpha$, $\theta$, $k$, $\|f_0\|_{L^{\infty,k}}$, and $\|f_0\|_{\logalpha}$.
\end{Proposition}


We observe that the restriction $T_0 \leq 1/2$ is a technical one.  Indeed, one can iterate \Cref{p.w02091} starting at time $t=0$, $T_0$, $T_1$, $\dots$ to obtain the bound at some (potentially) large time.  As we see in its proof, and as is already hinted at by the exponent of the first term in the left hand side of the main inequality in \Cref{p.w02091}, it may be that the weighted H\"older norm blows up at a finite time.  We do not address this further here, as it was already handled at the conclusion of the proof of \Cref{t.Landau}.

In \cite{HST2019rough}, the analogue to \Cref{p.w02091} was broken up into two separate steps~\cite[Proposition~4.4 and Lemma~4.6]{HST2019rough}, one for each of the two inequalities.  Here, however, we must deduce the Hessian bound and the propagation of regularity simultaneously.    We discuss this in further detail after stating the next lemma, which plays a key role in the proof of \Cref{p.w02091}

The next lemma is an estimate on $D^2_v f$ in terms of the $\logalpha$-norm of $f$.  This is obtained by rescaling the equation, applying the Schauder estimates (\Cref{t.Schauder}), and then interpolating between the resulting $C^{\frac{2+\alpha}{3}}_xC^{2+\alpha}_v$-estimate and the $\logalpha$-seminorm of $f$.  

We note that, in order to do this, it is crucial that our Schauder estimates do not require $t$-H\"older regularity of the coefficients.  Indeed, the coefficient $\bar a^f$ is a $v$-convolution of $f$ and a kernel, and, hence, will have no more $t$-regularity than that of $f$.  {\em A priori} we do not have any bounds on the $t$-H\"older regularity of $f$.  One might attempt to obtain apply known estimates (e.g. the De Giorgi estimates \cite[Theorem 12]{GIMV}) to obtain a $t$-H\"older bound; however, these estimates will scale poorly in $t$, leading to a non-integrable bound in $t$.  This is overcome in~\cite[Proposition~A.1]{HST2019rough} by a lemma showing that $f$ obtains $t$ H\"older continuity from $(x,v)$ H\"older regularity.  This is clearly not useful in our setting as we do not yet have ``nice'' $v$ H\"older regularity of $f$.

\begin{Lemma}
	\label{l.w02102}
	Under the assumptions of \Cref{p.w02091}, 
\begin{equation}\begin{split}
	\|D_{v}^{2}f\|_{L^{\infty,m-2} ([t_0/2,t_0]\times \R^6)}
	\lesssim
	\ &\frac{1}{t_0 \log(\frac{1}{t_0})^{\frac{\theta}{2}\frac{\alpha}{2+\alpha}}}
	(1+	\|\vv^{m}f\|_{\logalpha([t_0/4, t_0]\times \R^6)})^{1+\frac{2}{\alpha}}
	\\&
		+
	\frac{t_0^{\alpha/2}}{\left(\log\frac{1}{t_0}\right)^{\frac{\alpha}{2(2+\alpha)}\frac{\theta}{2}}} \|D_v^2f\|_{L^{\infty,m-2}([t_0/4,t_0]\times \R^6)}^{\alpha/2}
	,
	\end{split}
	\end{equation}
	for any $t_0 \in (0,\min\{1/2,T\}]$.  The implied constant in the above estimate depends additionally on $\|f\|_{L^{\infty, k }([0,T]\times \R^6)}$.
\end{Lemma}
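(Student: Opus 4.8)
The goal is a bound on $\|D^2_v f\|_{L^{\infty,m-2}([t_0/2,t_0]\times\R^6)}$ of the claimed form, obtained by rescaling \Cref{e.Landau} into a unit-scale kinetic Fokker-Planck equation, applying \Cref{t.Schauder}, and then interpolating between the resulting $C^{(2+\alpha)/3}_x C^{2+\alpha}_v$ control and the propagated $\logalpha$-norm of $f$.  First I would fix a base point $z_* = (t_*, x_*, v_*)$ with $t_* \in [t_0/2, t_0]$ and perform the kinetic scaling adapted to the ellipticity of $\bar a^f$ near $v_*$.  Because of the weight degeneracy in \Cref{e.bar_a_above}--\Cref{e.bar_a_below}, the natural scale is $\rho \approx \sqrt{t_0}\,\vv_*^{-\gamma/2}$ (up to the anisotropy between directions parallel and perpendicular to $v_*$, which one handles either by working with the worst direction or by a linear change of velocity variables straightening out $\bar a^f(z_*)$); set
\be\notag
	f_*(t,x,v) = f\big(z_* \circ (\rho^2 t, \rho^3 x, \rho v)\big),
\ee
so that $f_*$ solves an equation of the form \Cref{e.kfp} on $Q_1$ with coefficients $\bar a_*, \bar c_*, g_* \in \calpha(Q_1)$, uniformly elliptic thanks to the lower bound \Cref{e.bar_a_below}, and with $f_* \in L^\infty(Q_1)$ bounded by $\|f\|_{L^{\infty,k}}$ up to weights.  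The point of the scaling is that $t_0 \le 1/2$ forces $\rho \le 1$, so $Q_1$ in the rescaled variables sits inside the time slab $[t_0/4, t_0]$ at the original scale, which is where all hypotheses live.

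Next I would record the $\calpha$-norms of the rescaled coefficients.  The crucial input is that $\bar a^f$ and (for $\gamma > -3$) $\bar c^f$ are $v$-convolutions of $f$ with fixed kernels, so they inherit $C^\alpha_v$-regularity for any $\alpha < \min\{1, 3+\gamma\}$ with a constant controlled by $\|f\|_{L^{\infty,k}}$ alone — \emph{not} by any norm of $f_{\rm in}$ — while their $x$-regularity and (for $\gamma = -3$) the $\bar c^f = c_\gamma f$ term are controlled by $\|f\|_{\logalpha}$, which is weaker than $\calpha$ but still suffices since $[f]_{C^{\alpha/3}_x \log} \le [f]_{\logalpha}$ and a logarithmic modulus is $\le$ a Hölder modulus at small scales.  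Tracking the scaling factors, $[\bar a_*]_{\calpha(Q_1)}$, $[\bar c_*]_{\calpha(Q_1)}$, $[g_*]_{\calpha(Q_1)}$ come out bounded by a constant times powers of $\rho$ and $(1 + \|\vv^m f\|_{\logalpha([t_0/4,t_0])})$; here the $g_*$ term absorbs the $\bar c^f f$ contribution and any weight commutator terms.  Feeding this into \Cref{t.Schauder} and unscaling gives a bound of the schematic form
\be\notag
	[D^2_v f]_{C^{\alpha/3}_x C^\alpha_v}\text{ (at scale }\rho, \text{ weighted)}
		\lesssim \rho^{-(2+\alpha)}\big(1 + \|\vv^m f\|_{\logalpha}\big)^{1+\frac{2}{\alpha}}
			+ \rho^{-(2+\alpha)}\big(1+\|\vv^m f\|_{\logalpha}\big)\,[g\text{-part}],
\ee
and in particular, evaluating at the center, a bound on $\|D^2_v f(z_*)\|$ with a factor $\rho^{-2} \approx t_0^{-1}$ out front.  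That factor alone is exactly the non-integrable $1/t_0$ that must be improved.

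The improvement — and the technical heart of the lemma — comes from interpolating.  At scale $\rho$, I would interpolate the quantity $\rho^\alpha [D^2_v f]_{C^\alpha_v}$ between $\rho^{2+\alpha}[D^2_v f]_{C^{\alpha}_v}$ (which the Schauder estimate bounds) on one side and, on the other, the \emph{logarithmic} oscillation of $f$ at scale $\rho$: since $f$ has a $\log^{-\theta}$ modulus in $v$ and $\rho \approx \sqrt{t_0}$, a second-difference/telescoping argument turns $[f]_{\logalpha}$ into a gain of a factor $(\log(1/\rho))^{-\theta/2} \approx (\log(1/t_0))^{-\theta/2}$ for every unit of "$v$-regularity" traded, and the exponent $\tfrac{\alpha}{2+\alpha}$ is precisely the fraction of the way from $C^2_v$ to $C^{2+\alpha}_v$ one must interpolate.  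This yields the two-term bound in the statement: the first term $t_0^{-1}(\log\tfrac{1}{t_0})^{-\frac{\theta}{2}\frac{\alpha}{2+\alpha}}(1+\|\vv^m f\|_{\logalpha})^{1+2/\alpha}$ from the "main" Schauder contribution, and the second term $t_0^{\alpha/2}(\log\tfrac{1}{t_0})^{-\frac{\alpha}{2(2+\alpha)}\frac{\theta}{2}}\|D^2_v f\|_{L^{\infty,m-2}}^{\alpha/2}$ coming from the $\bar c^f$-contribution when $\gamma=-3$, which one cannot bound by $\|f\|_{L^{\infty,k}}$ but only by reintroducing $D^2_v f$ on the neighboring time slab $[t_0/4,t_0]$ through the forcing $g_*$; the small prefactor on that term is what will later let it be absorbed in \Cref{l.Hessian}.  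The main obstacles I anticipate are, first, bookkeeping the velocity-weights $\vv^{m}$, $\vv^{m-2}$ through the scaling and through the Schauder estimate (this requires splitting the region $|v| \lesssim 1$ from $|v| \gg 1$ and commuting $\vv^m$ past $D^2_v$, generating lower-order terms absorbed into $g_*$); and second, making the $\log$-interpolation rigorous — one must carefully choose the interpolation scale (a dyadic scale $\sim 1/\log(1/t_0)$ inside $Q_1$) so that the logarithmic modulus of continuity genuinely produces the stated power of $\log(1/t_0)$ rather than a power of $t_0$, and verify that the anisotropy of $\bar a^f$ does not spoil the exponent $\frac{\alpha}{2+\alpha}$.
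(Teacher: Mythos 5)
Your proposal follows essentially the same route as the paper: rescale at a base point $z_0$ with the anisotropic change of variables $S$, $r_0$ adapted to the degenerate ellipticity of $\bar a^f$ (giving $f_{z_0}$ solving a unit-scale kinetic Fokker--Planck equation on $Q_1$), bound the $\calpha$-norms of the rescaled coefficients $\bar A$, $\bar C$ using only $\|f\|_{L^{\infty,k}}$ for the $v$-regularity of $\bar A$ (via the convolutional structure) and the $\logalpha$/$\calpha$-norm of $\vv^m f$ for the remaining pieces, apply \Cref{t.Schauder}, and interpolate the resulting $C^{2,\alpha}$-control against the propagated logarithmic modulus to extract the $(\log\frac{1}{t_0})^{-\frac{\theta}{2}\frac{\alpha}{2+\alpha}}$ gain. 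Your account of where the second term comes from --- the $\bar c^f$ contribution requires $v$-regularity of $f$, which is traded for $\|D^2_v f\|^{\alpha/2}$ by interpolation at the expense of the small prefactor $t_0^{\alpha/2}$ --- also matches the paper's mechanism (the paper handles it through $[\bar C]_{\calpha}$ in the Schauder constant rather than through $g_*$, but these are equivalent bookkeeping choices). The one place where you gloss over genuine work is the interpolation step: the paper first converts the $\log(1/C_v)^{-\theta}$ modulus of $f$ on $Q_{t_0/2}(z_0)$ into a $\log(1/C_v)^{-\theta/2}$ modulus of $f_{z_0}$ on $Q_1$ with a prefactor $(\log\frac{1}{t_0})^{-\theta/2}$ (\Cref{l.logH_scaling}), and only then applies a bespoke log-H\"older interpolation (\Cref{l.w01081}) at a single scale $\eps\sim(\log\frac{1}{t_0})^{-\theta/(2(2+\alpha))}$; both the $\theta\mapsto\theta/2$ loss and the $\alpha/(2+\alpha)$ exponent come out of that computation, so your "fraction of the way from $C^2_v$ to $C^{2+\alpha}_v$" heuristic is correct but should be checked against the explicit exponent arithmetic, as should your formula for the rescaling radius (the paper uses $r_0=\vvo^{-(1+\gamma/2)_+}\min(1,\sqrt{t_0/2})$ together with $S$, not $\sqrt{t_0}\vv_*^{-\gamma/2}$).
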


Again, we note that the fact that $t_0$ is restricted to be less than $1/2$ is only so that the $\log$ in the denominator does not take the value zero.

We now briefly comment that the necessity of proving both the Hessian bound and the propagation of regularity simultaneously is related to the fact that in \Cref{l.w02102}, one obtains both the $\logalpha$-norm  and a $W^{2,\infty}_v$-norm on the right hand side.  Hence, a dynamic argument is required in order to ``absorb'' the $W^{2,\infty}_v$-norm.  The reason that both terms appear in our setting (in contrast to the work in~\cite{HST2019rough}) is that we cannot bound the $\calpha$-norm of $\bar c^f$, which is required for the Schauder estimates, by the $\logalpha$-norm of $f$.

The proof of \Cref{l.w02102} is contained in \Cref{s.schauder_scaling}.


\subsection{The Hessian bound and propagation of regularity: the proof of \Cref{p.w02091}}

In this section, we prove the main estimate. Before that, we need to recast our notion of regularity.  For any point $(t,x,v,\chi, \nu)\in \R_{+}\times \R^6 \times B_{1/2}(0)^2$ and any real number $m>0$, we define
	\begin{equation}\label{e.w02091}
	\begin{split}
		&\tau f(t,x,v,\chi, \nu)
			:= f(t,x+\chi, v+\nu),
		\quad
		\delta f(t,x,v,\chi, \nu)
			:= f(t,x+\chi, v+\nu)-f(t,x,v),
		\\&
		\text{and }\quad
		g(t,x,v,\chi, \nu)
			:= \frac{|\delta f(t,x,v,\chi, \nu)|^2}{(|\chi|^2 + |\log|\nu||^{-2\theta/\alpha})^{\mu \alpha}}\vv^{2m}.
	\end{split}
	\end{equation}
	Then we have the following obvious equivalence between bounds on $g$ and the regularity of $f$.  We omit the proof.
\begin{Lemma}
	\label{l.w02101}
	We have
	\begin{equation}\notag
	\begin{split}
	\|g\|_{L_{x,v}^{\infty}} + \|\vv^{m}f\|_{L^{\infty}(\R^6)}^2
	&\approx
	\|\vv^{m}f\|_{\logalphamu(\R^6)}^2
	\\&\approx
	\sup_{(x_0,v_0)}
	\vvo^{m}\|f\|_{\logalphamu(B_{1/2}(x_0,v_0))}^2
	,
	\end{split}
	\end{equation}
	where the implied constant depend only on $m$, $\theta$, and $\alpha$.
\end{Lemma}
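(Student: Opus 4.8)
The plan is to strip the weight $\vv^m$ from each of the three expressions and reduce everything to a single finite-difference seminorm of $f$. The one mildly nontrivial tool I would isolate first is the elementary equivalence that for every $p>0$ there are constants $c_p,C_p>0$ with $c_p(a^p+b^p)^2\le(a^2+b^2)^p\le C_p(a^p+b^p)^2$ for all $a,b\ge 0$; this follows by homogeneity and a compactness argument. Applying it to the outer exponent and the two summands of the denominator in~\eqref{e.w02091}, one sees that this denominator is comparable, with constants depending only on $\alpha$, $\theta$ (and $\mu$), to the square of the log-H\"older modulus $\omega(\chi,\nu)$ matching the seminorm in~\eqref{e.logalpha}. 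Hence, at fixed $t$,
\[
	\|g\|_{L^\infty_{x,v}}
		\approx \sup_{x,v}\ \sup_{0<|\chi|,|\nu|<1/2}\ \vv^{2m}\,\frac{|f(t,x+\chi,v+\nu)-f(t,x,v)|^2}{\omega(\chi,\nu)^2} .
\]
Since $\vv$ and $\vvo$ differ only by a universal multiplicative constant on $B_{1/2}(x_0,v_0)$, the right-hand side is comparable to $\sup_{(x_0,v_0)}\vvo^{2m}[f]_{\logalpha(B_{1/2}(x_0,v_0))}^2$; adding $\|\vv^m f\|_{L^\infty(\R^6)}^2$, which localizes trivially, then identifies $\|g\|_{L^\infty_{x,v}}+\|\vv^m f\|_{L^\infty(\R^6)}^2$ with the third quantity in the statement.

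It then remains to pass between this base-point-weighted quantity and the genuinely weighted seminorm $[\vv^m f]_{\logalpha(\R^6)}$. For points with $|x-x'|,|v-v'|<1/2$ I would split
\[
	\vv^m f(t,x,v)-\vvp^m f(t,x',v')
		=\vv^m\bigl(f(t,x,v)-f(t,x',v')\bigr)+\bigl(\vv^m-\vvp^m\bigr)f(t,x',v') .
\]
Dividing by $\omega(x-x',v-v')$ and using $\vv\approx\vvp$, the first piece is controlled by the square root of the quantity in the previous display (and, run in reverse, controls it). For the cross term I would use $|\vv^m-\vvp^m|\lesssim_m\vv^{m-1}|v-v'|$ for $|v-v'|<1/2$ together with the inequality $|v-v'|\lesssim_\theta\log(1/|v-v'|)^{-\theta}$ on the same range, so that the cross term divided by $\omega$ is at most $\lesssim_{m,\theta}\|\vv^m f\|_{L^\infty(\R^6)}$. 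Taking the supremum over admissible pairs, and then the reverse direction, gives $[\vv^m f]_{\logalpha(\R^6)}^2\approx\|g\|_{L^\infty_{x,v}}+\|\vv^m f\|_{L^\infty(\R^6)}^2$, which is the first $\approx$.

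For the equivalence with the localized form I would note that any two points at distance $<1/2$ lie in $B_{1/2}(\bar z)$ with $\bar z$ their midpoint, so the global seminorm $[\vv^m f]_{\logalpha(\R^6)}$ equals the supremum over $(x_0,v_0)$ of $[\vv^m f]_{\logalpha(B_{1/2}(x_0,v_0))}$; on each such ball $\vv\approx\vvo$ universally, whence $[\vv^m f]_{\logalpha(B_{1/2}(x_0,v_0))}^2\approx\vvo^{2m}[f]_{\logalpha(B_{1/2}(x_0,v_0))}^2$, the weight-difference term absorbed exactly as above. Chaining these comparisons closes the three-way equivalence, with all implied constants depending only on $m$, $\theta$, and $\alpha$.

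I do not anticipate a real obstacle here: the lemma is essentially bookkeeping around the slow variation of the Japanese bracket and an elementary comparison of moduli of continuity. The one step that needs care — and the only place where $\theta$ enters the constants nontrivially — is the cross term coming from differentiating the weight $\vv^m$: it must be shown to be strictly lower order than the log-H\"older modulus, which is precisely the statement that $|v-v'|=o\bigl(\log(1/|v-v'|)^{-\theta}\bigr)$ as $|v-v'|\to 0$, i.e.\ that an ordinary Lipschitz increment of the weight is dominated by the much coarser logarithmic modulus.
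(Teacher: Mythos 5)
Your overall structure is the right one and matches what the paper must have in mind: the elementary equivalence $(a^2+b^2)^p\approx(a^p+b^p)^2$ with constants depending only on $p$, the weight-stripping via $\langle v\rangle\approx\langle v_0\rangle$ on $B_{1/2}(x_0,v_0)$, and the absorption of the cross term $(\langle v\rangle^m-\langle v'\rangle^m)f(t,x',v')$ into $\|\langle v\rangle^m f\|_{L^\infty}$ via $|v-v'|\lesssim_\theta\log(1/|v-v'|)^{-\theta}$ for $|v-v'|<1/2$ are exactly the ingredients. The paper declares this lemma obvious and omits the proof; your write-up supplies the bookkeeping.

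There is, however, a genuine gap in the central step, which you assert rather than verify: that the denominator of $g$ in~\eqref{e.w02091} is comparable to the square of the modulus $\omega(\chi,\nu)=|\chi|^{\alpha/3}+\log(1/|\nu|)^{-\theta}$ from~\eqref{e.logalpha}. Carrying out your own equivalence with $a=|\chi|$, $b=|\log|\nu||^{-\theta/\alpha}$, and $p=\mu\alpha$ gives
\[
(|\chi|^2+|\log|\nu||^{-2\theta/\alpha})^{\mu\alpha}\approx\bigl(|\chi|^{\mu\alpha}+|\log|\nu||^{-\mu\theta}\bigr)^2,
\]
i.e.\ an $x$-modulus $|\chi|^{\mu\alpha}$ and a $v$-modulus $\log(1/|\nu|)^{-\mu\theta}$. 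Matching this to $\omega(\chi,\nu)^2$ would require $\mu\alpha=\alpha/3$ and $\mu\theta=\theta$ simultaneously, which no single $\mu\in(0,1)$ satisfies, so the comparison fails with the definitions taken at face value. The natural resolution is that the statement is meant to use the $\logalphamu$ seminorm (consistent with \Cref{p.w02091}) and that the exponent in~\eqref{e.w02091} should be $|\chi|^{2/3}$ rather than $|\chi|^2$, which aligns the $x$-exponents. The same exponent audit shows the weight $\langle v_0\rangle^m$ in the third quantity must be $\langle v_0\rangle^{2m}$ to balance the $\langle v\rangle^{2m}$ inside $g$ and the square on $\|\langle v\rangle^m f\|_{L^\infty}$. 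These discrepancies are almost certainly typographical in the source, but this lemma \emph{is} the exponent-matching exercise, and a correct proof must surface rather than pass over them.
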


With \Cref{l.w02101} in hand, we are now able to prove our main estimate \Cref{p.w02091} 
using the strategy of~\cite[Proposition~4.4]{HST2019rough}.  When the details are the same as in \cite[Proposition~4.4]{HST2019rough} we note this and omit them.

\begin{proof}[Proof of \Cref{p.w02091}]
	Before beginning we note two things.  The first is that, since we are proving a statement regarding a solution constructed in \cite{HST2019rough}, we may assume  without loss of generality that $f$ is smooth.  Indeed, in~\cite{HST2019rough}, the solution $f$ is approximated by smooth solutions of~\eqref{e.Landau}.  Were we to prove the claim for the approximating solution, it holds for $f$ in the limit.

	Next, we note that $f \in L^{\infty,k}$, by assumption.  Hence, we ignore this norm throughout and absorb all instances of it into the $\lesssim$ notation.

	As the proof is somewhat complicated, we break it up into a number of steps.

	\smallskip
	
	\noindent	
	{\bf Step 1: an equation for $g$ and straightforward estimates.} Using~\eqref{e.Landau}, we find
	\begin{equation}\label{e.c51501}
	\begin{split}
	&\partial_{t}g + v\cdot \nabla_{x}g +\nu\cdot \nabla_{\chi}g 
	+
	\frac{2\alpha \mu \chi \cdot \nu}{|\chi|^2 + |\log|\nu||^{-2\theta/\alpha}} g
	\\&\qquad
	=
	2\frac{\tr(\bar a^{\delta f}D_{v}^{2}\tau f+\bar a^{ f}D_{v}^{2}\delta f)
		+\bar c^{\delta f}\tau f
		+\bar c^{f}\delta f}{(|\chi|^2 + |\log|\nu||^{-2\theta/\alpha})^{\mu\alpha}}\delta f \vv^{2m}.
	\end{split}
	\end{equation}
	Three terms are estimated exactly\footnote{This corresponds to the estimates of $J_1$, $J_4$, and $J_5$ in \cite[Proposition~4.4]{HST2019rough}.} as in \cite[Proposition 4.4]{HST2019rough}:
	\be\label{e.c51502}
		- \frac{2\alpha \mu \chi \cdot \nu}{|\chi|^2 + |\log|\nu||^{-2\theta/\alpha}} g
		+\frac{\bar c^{\delta f}\tau f
		+\bar c^{f}\delta f}{(|\chi|^2 + |\log|\nu||^{-2\theta/\alpha})^{\mu\alpha}}\delta f \vv^{2m}
			\lesssim g + \sqrt{ g \|g(t)\|_{L^\infty(\R^6\times B_1^2)}}.
	\ee
	Here we used~\eqref{e.bar_c}, the condition that $m > 5+\gamma$, and that the $L^{\infty,k}$-norm of $f$ bounds $\vv^m \tau f$.

	Additionally, arguing as in \cite[Proposition 4.4]{HST2019rough}, one sees
	\be\notag
		\frac{|\bar a^{\delta f}|}{(|\chi|^2 + |\log|\nu||^{-2\theta/\alpha})^{\mu\alpha/2}}
			\lesssim \vv^{(2+\gamma)_+} \sqrt{\|g(t)\|_{L^\infty(\R^6\times B_1^2)}}.
	\ee
	The argument for this uses the definition of $g$ in terms of $\delta f$ and~\eqref{e.bar_a_above}.

	Hence, we have
	\be\label{e.c52405}
		\begin{split}
		\partial_t g + v\cdot\nabla_x g + &\nu\cdot \nabla_\chi g
			- 2\frac{\tr(\bar a^{ f}D_{v}^{2}\delta f)}{(|\chi|^2 + |\log|\nu||^{-2\theta/\alpha})^{\mu\alpha}}\delta f \vv^{2m}
			\\&
			\lesssim g + \left(1 + \|D_{v}^{2}\tau f(t)\|_{L^{\infty,m+(2+\gamma)_+}(\R^6)}\right) \sqrt{g\|g(t)\|_{L^\infty(\R^6 \times B_1^2)}}.
		\end{split}
	\ee
	This concludes the first step.
	
	To briefly comment on how we proceed from here, note that, roughly the terms on the left hand side should have a good sign at a maximum of $g$ (if we think of $\delta f$ as, approximately $\sqrt g$).  On the other hand, the pure $g$ term on the right hand side lend itself to the construction of a barrier.  The most complicated term is the Hessian term in $\tau f$.  For this, we use \Cref{l.w02102} and the fact that the Hessian term on the left has a small parameter in front (when $t_0 \ll 1$), which, through a somewhat complicated process, allows us to to absorb this into Hessian in the left hand side of \Cref{l.w02102}.

	\smallskip
	
	\noindent	
	{\bf Step 2: an upper barrier.} 
	With $N > 1$ to be chosen later and fixing any $0< \mu' < \mu$  such that
	\be
		\frac{\mu' \theta}{2} \frac{\mu' \alpha}{2 + \mu'\alpha}
			> 1,
	\ee
	define $\bar G$ to be the unique solution to 
	\begin{equation}\label{e.w02281}
	\begin{cases}
		\frac{d}{dt} \bar G(t)
			= \frac{N^2}{t \left(\log\frac{1}{t}\right)^{\frac{\mu\theta}{2}\frac{\mu\alpha}{2+\mu\alpha}}} (1 + \bar G)^{\frac{1}{2} + \frac{1}{\mu' \alpha}} \bar G,
	\\
		\bar G(0)=\|g(0)\|_{L^\infty} + N\|f\|_{L^{\infty, m}}^2+1
	.
	\end{cases}
	\end{equation}
	Let $T_1$ be the largest time in $[0,1/2]$ that $\bar G(T_1) \leq 2 \bar G(0)$.  Let
	\be
		T_0 = \min\{1, T_1, T_2\}
	\ee
	for $T_2$ to be chosen in the sequel.  Clearly $T_1$ depends on $N$, but $N$ will be chosen to depend only on $\mu$, $\alpha$, $\theta$, $m$, and $k$.  We note that
	\be\label{e.c52601}
		\bar G(t) \geq 1
			\qquad\text{ for all } t\in [0,T_0].
	\ee
	
	We define the auxiliary function
	\be
		G_2(t)
			= 
			t \left(\log \frac{1}{t}\right)^{\frac{\mu'\theta}{2}\frac{\mu'\alpha}{2 + \mu'\alpha}} \|D^2_v f\|_{L^{\infty,m+(2+\gamma)_+}([t/2,t]\times\R^6)},
	\ee
	and then let
	\be
		G(t, x, v, \chi, \nu)
			= \max\left\{g(t,x,v,\chi,\nu),  \Big(\frac{1}{N} G_2(t)\Big)^\frac{2\mu' \alpha}{2 + \mu'\alpha}\right\}.
	\ee
	Our goal is to show that, for $t\in [0,T_0]$,
	\begin{equation}
		G(t,x,v,\chi,\nu) < \bar G(t).
	\end{equation}
	This is true at $t=0$ by construction (recall that, without loss of generality, our $f$ is smooth, so that $G_2(0) = 0$).  Hence, we may define
	\be\label{e.c52301}
		t_0 = \sup\{\bar t \in[0,T_0]: \|G(s)\|_{L^\infty(\R^6)} <  \bar G(s) \quad\text{for all } s \in [0,\bar t]\}.
	\ee
	If $t_0 = T_0$, we are finished.  Hence, we argue by contradiction, assuming that
	\be\label{e.c51505}
		t_0 < T_0.
	\ee
	
	\smallskip
	
	\noindent{\bf Step 3: The case where $g$ is not the dominant term in $G$.} 
	Clearly $\|G(t_0)\|_{L^\infty(\R^6 \times B_1^2)} = \bar G(t_0)$.  Consider the case where
	\be\label{e.c52402}
		\|g(t_0)\|_{L^\infty(\R^6 \times B_1^2)}
			< \Big(\frac{1}{N} G_2(t_0)\Big)^\frac{2\mu' \alpha}{2 + \mu'\alpha}
		\quad\text{ so that }
		\qquad \Big(\frac{1}{N} G_2(t_0)\Big)^\frac{2\mu' \alpha}{2 + \mu'\alpha} = \bar G(t_0).
	\ee
	
	
	Then, using \Cref{l.w02102} and that
	\be
		t_0 \left(\log \frac{1}{t_0}\right)^{\frac{\mu'\theta}{2}\frac{\mu'\alpha}{2 + \mu'\alpha}}
			\|D^2_v f\|_{L^\infty([t_0/4,t_0]\times \R^6)}
				\lesssim 
				 G_2(t_0/2) + G_2(t_0),
	\ee
	we find
	\be
	\begin{split}
		G_2(t_0)
			\lesssim\ 
				&\left(1 + \|\vv^{m+2+(2+\gamma)_+} f\|_{\logalphamup([t_0/4,t_0]\times \R^6)}\right)^{1 + \frac{2}{\mu'\alpha}}
				\\& \quad
				+ t_0^\frac{\mu' \alpha}{2}
					\left(G_2(t_0/2) + G_2(t_0)\right)^\frac{\mu'\alpha}{2}.
	\end{split}
	\ee
	Using the interpolation lemma (\Cref{l.weight_interpolation}), \Cref{l.w02101}, and increasing $k$ if necessary, we find
	\be\label{e.c52401}
	\begin{split}
		G_2(t_0)
			\lesssim\ 
				&\left(1 + \|g\|^{1/2}_{L^\infty([t_0/4,t_0]\times \R^6 \times B_1^2)}\right)^{1 + \frac{2}{\mu'\alpha}}
				+ t_0^\frac{\mu' \alpha}{2}\left(G_2(t_0/2)^\frac{\mu'\alpha}{2} + G_2(t_0)^\frac{\mu'\alpha}{2}\right).
	\end{split}
	\ee
	We recall that we are not tracking the $L^{\infty,k}$-norm of $f$ as it is bounded by assumption.  We also note that it is in this step that we used that $\mu' < \mu$.

	By the choice of $t_0$ and the fact that $\bar G$ is increasing, we see that
	\be
		G_2(t_0/2) \leq N\bar G(t_0/2)^\frac{2+\mu'\alpha}{2\mu'\alpha}
			\leq N \bar G(t_0)^\frac{2+\mu'\alpha}{2\mu'\alpha}
			= G_2(t_0).
	\ee
	Also, by the definition of $t_0$ and~\eqref{e.c52402},
	\be
		\|g\|_{L^\infty([t_0/4,t_0]\times \R^6 \times B_1^2)}
			\leq \sup_{t\in[t_0/4,t_0]} \bar G(t)
			= \bar G(t_0)
			= \Big(\frac{1}{N} G_2(t_0)\Big)^\frac{2\mu'\alpha}{2 + \mu'\alpha}.
	\ee
	Then~\eqref{e.c52401} becomes:
	\be
		\begin{split}
		G_2(t_0)
			&\lesssim \left(1 + \Big(\frac{1}{N} G_2(t_0)\Big)^\frac{\mu' \alpha}{2+\mu'\alpha}\right)^\frac{2 + \mu'\alpha}{\mu'\alpha}
				+ t_0^\frac{\mu' \alpha}{2}
					 G_2(t_0)^\frac{\mu'\alpha}{2}
			\lesssim
				1
				+ \frac{1}{N} G_2(t_0)
				+ t_0^\frac{\mu' \alpha}{2} 
					G_2(t_0)^\frac{\mu'\alpha}{2}.
		\end{split}
	\ee
	Since $G_2(t_0) = N \bar G(t_0)^\frac{2+\mu'\alpha}{2\mu'\alpha} >1$, we have that	
	\be
		G_2(t_0)
			\lesssim 1+ \frac{1}{N} G_2(t_0)
				+ t_0^\frac{\mu' \alpha}{2}G_2(t_0).
	\ee
	After increasing $N$ and decreasing $T_2$, we may absorb the last two terms on the right into the left hand side.  After this and recalling~\eqref{e.c52402}, we find
	\be
		(N \bar G(t_0))^\frac{2+\mu'\alpha}{\mu'\alpha}
			= G_2(t_0)
			\lesssim 1.
	\ee
	After further increasing $N$ and recalling~\eqref{e.c52601}, this is clearly a contradiction.  It follows that~\eqref{e.c52402} cannot hold.  We conclude that
	\be\label{e.c52403}
		\|g(t_0)\|_{L^\infty(\R^6\times B_1^2)}
			= \bar G(t_0).
	\ee
	An important consequence of this is that, for all $t \leq t_0$,
	\be\label{e.c52404}
		\|D^2_v f\|_{L^{\infty,m+(2+\gamma)_+}([t/2,t]\times\R^6)}
			\leq \frac{N}{t \left(\log \frac{1}{t}\right)^{\frac{\mu'\theta}{2}\frac{\mu'\alpha}{2+\mu'\alpha}}}
				\|g(t)\|_{L^\infty(\R^6 \times B_1^2)}^{\frac{1}{2} + \frac{1}{\mu' \alpha}}.
	\ee

	\smallskip
	
	\noindent{\bf Step 4: The bad Hessian term in~\eqref{e.c52405} and an interpolation.}
	We now use~\eqref{e.c52404} in~\eqref{e.c52405} to bound the norm of the Hessian that arises there.
	
	We require one additional fact.  By the choice of $t_0$ and by~\eqref{e.c52403}, we have
	\be\label{e.c52406}
		\|g\|_{L^\infty([0,t_0]\times \R^6\times B_1^2)}
			= \|g(t_0)\|_{L^\infty(\times \R^6\times B_1^2)}.
	\ee
	Thus, at $(t_0,x_0,v_0,\chi_0,\nu_0)$, the combination of~\eqref{e.c52404} and~\eqref{e.c52406} in~\eqref{e.c52405} yields
	\be\label{e.c52407}
		\begin{split}
		\partial_t g + &v\cdot\nabla_x g + \nu\cdot \nabla_\chi g
			- 2\frac{\tr(\bar a^{ f}D_{v}^{2}\delta f)}{(|\chi|^2 + |\log|\nu||^{-2\theta/\alpha})^{\mu\alpha}}\delta f \vv^{2m}
			\\&
			\lesssim g + \frac{N}{t_0 \left(\log \frac{1}{t_0}\right)^{\frac{\mu'\theta}{2}\frac{\mu'\alpha}{2+\mu'\alpha}}} g^{\frac{3}{2} + \frac{1}{\mu'\alpha}}
			\lesssim \frac{N}{t_0 \left(\log \frac{1}{t_0}\right)^{\frac{\mu'\theta}{2}\frac{\mu'\alpha}{2+\mu'\alpha}}}(1 + g^{\frac{1}{2} + \frac{1}{\mu'\alpha}})g.
		\end{split}
	\ee

	\smallskip
	
	\noindent{\bf Step 5: finding a touching point.}   Using~\eqref{e.c52403} and arguing exactly as in the proof of \cite[Proposition~4.4]{HST2019rough}, we may assume without loss of generality that there exists $(x_0,v_0,\chi_0,\nu_0) \in \R^6 \times \bar B_1 (0)^2$ such that
	\be\label{e.c51203}
		g(t_0, x_0,v_0,\chi_0,\nu_0)
			=\bar G(t_0).
	\ee
	We omit the argument.
	
	\smallskip
	
	\noindent{\bf Step 6: the touching point must be in $B_1(0)^2$.}  If $\chi_0$ or $\nu_0$ were on the boundary, that is, either $\chi_0 \in \partial B_1 (0)$ or $\nu_0 \in \partial B_1(0)$, we deduce from the definition of $g$ that
	\begin{equation}\label{e.c51202}
	\begin{split}
	g(t_0, x_0, v_0,\chi_0,\nu_0)&\lesssim
	|\delta f(t_0, x_0, v_0,\chi_0,\nu_0)|^2
	\langle v_0 \rangle^{2m}
	\\&\lesssim
	(f(t_0, x_0+\chi_0,v_0 +\nu_0)^2
	+
	f(t_0, x_0, v_0)^2)\langle v_0 \rangle^{2m}
	\lesssim
	\|f\|^{2}_{L^{\infty, m}}.
	\end{split}
	\end{equation}
	In particular, this implies that, up to enlarging $N$ large enough depending only on the implied constant in~\eqref{e.c51202},
	\begin{equation}\notag
	\begin{split}
	g(t_0, x_0, v_0,\chi_0,\nu_0)\leq
	N\|f\|^{2}_{L^{\infty, m}}
	.
	\end{split}
	\end{equation}
	We see from \eqref{e.w02281} that $\bar G$ increases with time $t$. Thus,
	\begin{equation}\notag
	\bar G(t_0)
	\geq 
	\bar G(0)
	>N\|f\|^{2}_{L^{\infty, m}}
	,
	\end{equation}
	which contradicts~\eqref{e.c51203}.
	
	\smallskip
	
	\noindent
	{\bf Step 7: estimating the remaining term in~\eqref{e.c52407}.}
	We begin by expanding the last term on the left hand side of~\eqref{e.c52407} at the point $(t_0,x_0,v_0,\chi_0,\nu_0)$.  This is a simple multivariable calculus computation that is exactly as in \cite[Proposition~4.4]{HST2019rough}, so we omit it and simply state that:
	\be\notag
		\frac{\tr(\bar a^f D^2_v \delta f)}{(|\chi|^2 + |\log|\nu||^{-2\theta/\alpha})^{\mu\alpha}} \delta f \vv^{2m}
			= \tr(\bar a^f D^2_v g)
				+ \frac{2 m g}{\vvo^4} \left((m+2)v_0 \cdot \bar a^f v_0 - \vvo^2\tr \bar a^f\right).
	\ee
	This argument occurs at and below (4.7) in \cite{HST2019rough}.
	
	Since $g$ is at a maximum, we further obtain
	\be\notag
		\frac{\tr(\bar a^f D^2_v \delta f)}{(|\chi|^2 + |\log|\nu||^{-2\theta/\alpha})^{\mu\alpha}} \delta f \vv^{2m}
			\leq \frac{2 m g}{\vvo^4} \left((m+2)v_0 \cdot \bar a^f v_0 - \vvo^2\tr \bar a^f\right).
	\ee
	Hence, arguing as in \cite[Proposition~4.4]{HST2019rough} to bound the terms on the right hand side above, we find\footnote{This is the estimate of $J_3$ in \cite{HST2019rough}.  It is somewhat obvious from~\eqref{e.bar_a_above}.}
	\be\notag
		\frac{\tr(\bar a^f D^2_v \delta f)}{(|\chi|^2 + |\log|\nu||^{-2\theta/\alpha})^{\mu\alpha}} \delta f \vv^{2m}
			\lesssim g.
	\ee
	
	Combining the above with~\eqref{e.c52407},
	we have, at $(t_0, x_0,v_0, \chi_0,\nu_0)$,
	\be\label{e.c51504}
		\partial_t g + v\cdot\nabla_x + \nu \nabla_\chi g
			\lesssim
			\frac{N}{t \left(\log \frac{1}{t}\right)^{\frac{\mu'\theta}{2}\frac{\mu'\alpha}{2+\mu'\alpha}}}
				\left(1 + g\right)^{\frac{1}{2} + \frac{1}{\mu' \alpha}} 
				g.
	\ee

	\smallskip
	
	\noindent
	{\bf Step 8: concluding the proof.}  By the construction of $(t_0,x_0,v_0,\chi_0,\nu_0)$, it is a minimum of $\bar G - g$ on $[0,t_0]\times \R^6 \times B_1^2$.  Hence,
	\be\notag
		\partial_t (\bar G - g)
			+ v\cdot\nabla_x (\bar G - g)
			+ \nu \cdot \nabla_\chi(\bar G - g)
			\leq 0.
	\ee
	Using~\eqref{e.w02281} and~\eqref{e.c51504} and recalling that $\bar G(t_0) = g(t_0,x_0,v_0,\chi_0,\nu_0)$, this implies that, at $(t_0,x_0,v_0,\chi_0,\nu_0)$,
	\be\notag
		\begin{split}
		\frac{N^2}{t_0 \left(\log \frac{1}{t_0}\right)^{\frac{\mu' \theta}{2} \frac{\mu' \alpha}{2 + \mu'\alpha}}} \left(1 + \bar G\right)^{\frac{1}{2} + \frac{1}{\mu'\alpha}} \bar G
			&\lesssim \frac{N}{t \left(\log \frac{1}{t}\right)^{\frac{\mu'\theta}{2}\frac{\mu'\alpha}{2+\mu'\alpha}}}
				\left(1 + g\right)^{\frac{1}{2} + \frac{1}{\mu' \alpha}} 
				g
			\\&
			= \frac{N}{t \left(\log \frac{1}{t}\right)^{\frac{\mu'\theta}{2}\frac{\mu'\alpha}{2+\mu'\alpha}}}
				\left(1 + \bar G\right)^{\frac{1}{2} + \frac{1}{\mu' \alpha}} 
				\bar G.
		\end{split}
	\ee
	This is a contradiction if $N$ is sufficiently large.  Hence, it must be that~\eqref{e.c51505} does not hold, implying that
	\be\notag
		\sup_{(x,v,\chi,\nu)\in \R^6\times B_1^2} g(t,x,v,\chi,\nu) \leq \bar G(t)
			\qquad\text{ for all } t\in [0,T_0]
	\ee
	by definition of $t_0$.  Recalling \Cref{l.w02101}, this concludes the proof of the bound of $\|\vv^m f\|_{\calphamu([0,T_0]\times \R^6)}$.  The proof of the bound on the Hessian term in \Cref{p.w02091} follows from~\eqref{e.c52404} and the arbitrariness of $\mu$ and $\mu'$.
\end{proof}

\subsection{Scaling the Schauder estimates: proof of \Cref{l.w02102}}\label{s.schauder_scaling}

Due to the degeneracy of the ellipticity constants of $\bar a^f$ as $|v|\rightarrow \infty$ and the fact that $Q_1(t_0,x_0,v_0)$ may involve negative times, we must change of variables.  We begin by defining this change of variables.  It is the one used in~\cite{Cameron-Silvestre-Snelson,HS,HST2019rough}.

Fix $z_0 \in \R_{+}\times \R^{6}$. Let $S$ be the linear transformation such that
\begin{equation}\label{e.w05201}
Se = 
\begin{cases}
\langle v_{0}\rangle^{1+\gamma/2}e,& e\cdot v_{0}=0\\
\langle v_{0}\rangle^{\gamma/2}e,& e\cdot v_{0}=|v_0|,
\end{cases}
\end{equation}
and let
\begin{equation}\label{e.w05202}
r_0 =\langle v_{0}\rangle^{-(1+\gamma/2)_{+}}\min(1,\sqrt{t_0/2})
.
\end{equation}
Then we have the rescaled function
\be\label{e.f_z_0}
	f_{z_0}(z)
		:= f(r_0^2 t + t_0, r_0^3 Sx + x_0 + r_0^2 t v_0, r_0 S v + v_0),
\ee
which satisfies the rescaled equation
\be\notag
	(\partial_t + v\cdot\nabla_x) f_{z_0}
		= \tr(\bar A D^2_v f_{z_0})
			+ \bar C f_{z_0}
\ee
with coefficients
\be\notag
	\begin{split}
		&\bar A(z)
			= S^{-1} \bar a^f(r_0^2 t + t_0, r_0^3 S x + x_0+ r_0^2 t v_0, r_0 S v + v_0) S^{-1}
		\quad\text{ and}
		\\&
		\bar C(z)
			= r_0^2 \bar c^f(r_0^2 t + t_0, r_0^3 S x + x_0+ r_0^2 t v_0, r_0 S v + v_0).
	\end{split}
\ee
Roughly, the input of $f$ in the definition of $f_{z_0}$ can be written as $z_0 \circ (Sz)_{r_0}$ where $z_r = (r^2 t, r^3 x, r v)$ is the kinetic scaling by a factor $r$ and
\be
	z' \circ z
		= (t' + t, x' + x + t v', v' + v)
\ee
is the related to the Galilean Lie group structure associated to $\partial_t + v\cdot\nabla_x$.  For simplicity, we opt not to use this further, although it is common in the literature.

It is immediate from~\eqref{e.bar_a_above},~\eqref{e.bar_a_below}, and \cite[Proposition~3.1]{HS} 
 that
\be\label{e.c51204bis}
	\bar A
		\approx \Id
		\qquad\text{ on } Q_1,
\ee
and, by an easy computation (see~\cite[eqn~(2.15)]{HST2019rough}),
\be\label{e.c51204bisbis}
	\bar C(z)
		\lesssim \vvo^{-2} \min\{1, t_0\} \|f\|_{L^{\infty,m}}
\ee
for any $m > 3$.  Additionally, one can observe that
\be\label{e.c51413}
	\|f_{z_0}\|_{L^\infty(Q_1)}
		\lesssim \vvo^{-k} \|f\|_{L^{\infty,k}}.
\ee
We omit the proof of the above inequalities as they are straightforward and already contained in \cite{HS, HST2019rough}.

We note that the coefficients have the following regularity:
\begin{Lemma}
	\label{l.w01161}
	For $m, k> 5 + \gamma$ and $\alpha \in (0,1)$, we have
	\begin{equation}\label{e.w01241}
		[\bar A]_{\calpha(Q_{3/4})}
		\lesssim
		t_0^\frac{\alpha}{2} \left(\vvo^{2-\alpha} \|f\|_{L^{\infty,k}([t_0/4,t_0]\times \R^6)}
				+ \vvo^2 \|\vv^m f\|_{C_x^{\alpha/3}([t_0/4,t_0]\times \R^6)}\right)
	\end{equation}
	and
	\be\notag
		[\bar C]_{\calpha(Q_{3/4})}
			\lesssim t_0^{1+\frac{\alpha}{2}} 
			\vvo^\gamma \|\vv^m f\|_{\calpha([t_0/4,t_0]\times \R^6)}.
	\ee
\end{Lemma}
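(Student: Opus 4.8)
The proof rests on one structural observation: the Landau coefficients $\bar a^f$ and $\bar c^f$ are convolutions of $f$ against fixed kernels \emph{in the velocity variable alone}. Hence (i) an $x$--increment of either coefficient passes verbatim inside the integral, so $\bar a^f$ and $\bar c^f$ inherit exactly the $x$--H\"older regularity of $f$; (ii) a $v$--derivative may instead be thrown onto the kernel, which is smooth away from $w=0$, so $\bar a^f$ enjoys as much $v$--regularity as one wants, controlled by a weighted $L^\infty$--norm of $f$; and (iii) no $t$--regularity of $f$ is ever invoked --- which is exactly why \Cref{t.Schauder}, rather than a time-regular Schauder estimate, is the tool needed in \Cref{l.w02102}.

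First I would record the H\"older analogues of~\eqref{e.bar_a_above}--\eqref{e.bar_c}, in the spirit of~\cite[Lemma~2.1]{HST2019rough}. Differentiating $\bar a^h$ in $v$ and moving the derivative onto the kernel, using that $|w|^{1+\gamma}$ is locally integrable (as $\gamma>-3$) while $h$ supplies decay at infinity, gives $|\nabla_v\bar a^h(t,x,v)|\lesssim\vv^{1+\gamma}\|h\|_{L^{\infty,k}}$ for $k$ as in the statement, and hence $[\bar a^h(t,x,\cdot)]_{C^\alpha_v(B)}\lesssim(\operatorname{diam}B)^{1-\alpha}\sup_{v\in B}\vv^{1+\gamma}\|h\|_{L^{\infty,k}}$ on any ball $B$. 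Passing an $x$--increment inside the integral and using the decay of $h$ gives $|\bar a^h(t,x,v)-\bar a^h(t,x',v)|\lesssim\vv^{2+\gamma}[\vv^m h]_{C_x^{\alpha/3}}|x-x'|^{\alpha/3}$ for $m>5+\gamma$. The corresponding bounds for $\bar c^h$ hold with the weight $\vv^\gamma$ in place of $\vv^{2+\gamma}$ and with a $\calpha$--increment of $h$ in place of $\|h\|_{L^\infty}$: for $\gamma=-3$ this is trivial since $\bar c^h=c_\gamma h$, and for $\gamma>-3$ one uses local integrability of $|w|^\gamma$ and $\int|w|^\gamma\vv^{-m}\,dw\lesssim\vv^\gamma$.

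Next I would insert the scaling~\eqref{e.w05201}--\eqref{e.w05202}, recording $\|S\|=\vvo^{1+\gamma/2}\le\vvo^{(1+\gamma/2)_+}$, $\|S^{-1}\|^2=\vvo^{-\gamma}$ and $r_0=\vvo^{-(1+\gamma/2)_+}\min(1,\sqrt{t_0/2})$, and noting that for $(t,x,v)\in Q_{3/4}$ one has $r_0^2\le t_0/2$, so the time argument $r_0^2t+t_0$ lies in $[t_0/4,t_0]$ --- which is why the right-hand norms are over $[t_0/4,t_0]\times\R^6$. Since the $\calpha$--seminorm only compares points with equal $t$, a generic increment $\bar A(t,x,v)-\bar A(t,x',v')$ splits into an $x$--increment $\bar A(t,x,v)-\bar A(t,x',v)$ and a $v$--increment $\bar A(t,x',v)-\bar A(t,x',v')$; in the first only the $x$--slot of the argument of $\bar a^f$ changes, by $r_0^3S(x-x')$, in the second only the $v$--slot, by $r_0S(v-v')$. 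Bounding the $x$--increment with the $x$--H\"older estimate above (and using $\langle v_0+r_0Sv\rangle\approx\vvo$, since $r_0\|S\|\lesssim1$) yields a prefactor $\|S^{-1}\|^2\vvo^{2+\gamma}(r_0^3\|S\|)^{\alpha/3}$, while the $v$--increment, via the gradient bound, yields $\|S^{-1}\|^2\vvo^{1+\gamma}\,r_0\|S\|$ (the residual power $|v-v'|^{1-\alpha}$ is harmless since $|v-v'|<1/2$). On substituting the explicit $S$ and $r_0$, the powers of $\vvo$ contributed by $\|S^{-1}\|$, $\|S\|$ and $r_0$ cancel by design, leaving $\lesssim\vvo^{2-\frac{2\alpha}{3}(1+\gamma/2)_+}t_0^{\alpha/2}\le\vvo^2t_0^{\alpha/2}$ for the $x$--term and $\lesssim\vvo\,\min(1,\sqrt{t_0/2})\le\vvo^{2-\alpha}t_0^{\alpha/2}$ for the $v$--term (using $1\le2-\alpha$), i.e.\ exactly~\eqref{e.w01241}. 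The computation for $\bar C$ is the same but easier: no $S^{-1}$ prefactors, an overall $r_0^2\le\vvo^{-2(1+\gamma/2)_+}t_0/2$, and the $\bar c^f$--bounds contribute weight $\vvo^\gamma$; collecting powers gives $\lesssim\vvo^{\gamma}t_0^{1+\alpha/2}\|\vv^m f\|_{\calpha}$ for both increments.

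The one genuinely delicate point is the bookkeeping of the $\vvo$--exponents, which must be carried out separately in the two regimes $\gamma\in(-2,0)$ and $\gamma\in[-3,-2]$ because of the $(\cdot)_+$ in the definitions of $S$ and $r_0$; in each regime one checks that the surviving power of $\vvo$ lands at $\le2$ (resp.\ $\le2-\alpha$, resp.\ $\le\gamma$), which is what makes the hypotheses $m,k>5+\gamma$ and $\alpha<1$ enter. Everything else is the routine convolution estimate of the first step, which is already in the literature~\cite{HS,HST2019rough}.
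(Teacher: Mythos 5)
Your argument is correct and follows the paper's overall structure: pass the $x$-increment inside the $v$-convolution so that $\bar a^f$, $\bar c^f$ inherit the $x$-regularity of $f$; extract $v$-regularity of $\bar a^f$ from the kernel alone (so no $v$-regularity of $f$ is needed for $\bar A$); never invoke $t$-regularity of $f$ (the whole point of using \Cref{t.Schauder}); and then track $\vvo$, $\|S\|$, $\|S^{-1}\|$, $r_0$ through the scaling, noting that the time argument stays in $[t_0/4,t_0]$ because $r_0^2\le t_0/2$. The one genuine variation is the mechanism used to produce $v$-regularity of $\bar a^f$ from boundedness of $f$: you differentiate the kernel $\Pi(w)|w|^{2+\gamma}$, using local integrability of $|w|^{1+\gamma}$ in $\R^3$ (valid for all $\gamma\ge -3$) to obtain the Lipschitz bound $|\nabla_v\bar a^f|\lesssim\vv^{1+\gamma}\|f\|_{L^{\infty,k}}$ and then trade the residual $|v-v'|^{1-\alpha}\lesssim 1$; the paper instead decomposes the difference quotient into a near-field piece over $B_R$, $R=2|\tilde v-\tilde v'|$ (controlled by local integrability of the undifferentiated kernel and $R^{5+\gamma}$) and a far-field piece over $B_R^c$ (controlled by $C^\alpha$ continuity of the kernel away from the origin, $\big||w|^{2+\gamma}-|w+h|^{2+\gamma}\big|\lesssim|w|^{2+\gamma-\alpha}|h|^\alpha$). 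Both arguments are standard potential-theory estimates and both land inside the stated bound; yours yields the slightly sharper prefactor $\vvo\sqrt{t_0}\le\vvo^{2-\alpha}t_0^{\alpha/2}$, while the paper's decomposition has the advantage of extending to kernels that are only H\"older rather than Lipschitz. The $\vvo$-bookkeeping you carry out in the two regimes $\gamma\gtrless -2$, and the treatment of $\bar c^f$ using the $\calpha$-increment of $f$ (trivially when $\gamma=-3$), match the paper.
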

We note that \Cref{l.w01161} is stronger than its analogue \cite[Lemma~2.7]{HST2019rough} as we leverage the convolutional nature of $\bar a^f$ to obtain additional regularity in $v$ even when $f$ lacks regularity in $v$.  Additionally, the fact that we do not require $t$-regularity allows us to avoid the slight loss of regularity seen in \cite[Lemma~2.7]{HST2019rough}.  On the other hand, we note that we make no effort to optimize the $v_0$-weights in \Cref{l.w01161}.  We prove \Cref{l.w01161} in \Cref{s.technical}.

Moreover, we immediately see that the regularity of $f_{z_0}$ and $f$ are related by:
\be\label{e.c51417}
	\|f\|_{\calpha(Q_{r_0/2}(z_0))}
		\lesssim \min\{1, t_0\}^{-\alpha/2}\vvo^{\alpha( (1 + \gamma/2)_+ - \gamma/2)} \|f_{z_0}\|_{\calpha(Q_{1/2})}.
\ee
Analogous statements hold for higher regularity seminorms of $f$ as well.  Here, we are introducing the additional notation that
\be\label{e.c52410}
	Q_r(z_0)
		= \{(t,x,v) : t_0 - r^2< t\leq t_0, |x-x_0 - (t-t_0)v_0| < r^3, |v-v_0| < r\}.
\ee

Finally, before proving \Cref{l.w02102}, we state two final technical results related to scaling:
\begin{Lemma}[$\log$-H\"older interpolation inequality]\label{l.w01081}
	Fix any $u : \R^d \to \R$ and $r > 0$.  For $\alpha \in (0,1)$, $\theta>0$, and any $\eps\in(0,r)$,
	\begin{equation}\notag
	\begin{split}
	\|D_{v}^{2}u\|_{L^{\infty}(Q_r)}
	\lesssim
	\frac{\log(\eps)^{\theta}}{\eps^2}[u]_{\log(\frac{1}{C})^{-\theta}(Q_r)}
	+
	\eps^\alpha [u]_{C^{2,\alpha}(Q_r)}
	.
	\end{split}
	\end{equation}
	The implied constant depends only on $\theta$ and $\alpha$.
\end{Lemma}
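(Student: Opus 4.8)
The plan is to reduce the statement to a pointwise second-order finite-difference estimate and then optimize the step size. Fix $v_0$ in the $v$-section of $Q_r$ and a unit vector $e \in \R^d$; it suffices to bound $|\partial_e^2 u(v_0)|$, since mixed second derivatives are recovered from directional ones by polarization (e.g.\ $\partial_{v_iv_j}^2 u = \tfrac12(\partial_{(e_i+e_j)/\sqrt2}^2 u - \partial_{v_i}^2 u - \partial_{v_j}^2 u)$), which introduces no new difficulty. First I would fix a step $h \in (0,\eps)$ with $h < 1/2$ and consider the centered second difference $\Delta_h^2 u(v_0) := u(v_0+he) - 2u(v_0) + u(v_0-he)$, for which two competing bounds are available.

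On the one hand, since $u \in C^{2,\alpha}$, Taylor expansion in direction $e$ (the first-order terms cancelling) together with the H\"older form of the remainder gives
\[
	\Big| \frac{\Delta_h^2 u(v_0)}{h^2} - \partial_e^2 u(v_0)\Big| \lesssim h^\alpha\, [u]_{C^{2,\alpha}(Q_r)}.
\]
On the other hand, writing $\Delta_h^2 u(v_0) = \big(u(v_0+he) - u(v_0)\big) - \big(u(v_0) - u(v_0-he)\big)$ and estimating each increment by the $\log$-modulus of $u$ (legitimate because $h < 1/2$, so $\log(1/h) > 0$),
\[
	\big| \Delta_h^2 u(v_0)\big| \le 2\,\log(1/h)^{-\theta}\, [u]_{\log(1/C)^{-\theta}(Q_r)}.
\]
Combining the two displays via the triangle inequality,
\[
	|\partial_e^2 u(v_0)| \lesssim \frac{\log(1/h)^{-\theta}}{h^2}\,[u]_{\log(1/C)^{-\theta}(Q_r)} + h^\alpha\,[u]_{C^{2,\alpha}(Q_r)}.
\]
Choosing $h = \eps$ (or, if $\eps \ge 1/2$, a fixed admissible step such as $h = 1/4$, and noting that $\log(1/h)^{-\theta}h^{-2} \lesssim \log(1/\eps)^{-\theta}\eps^{-2}$ in either case) and taking the supremum over $v_0$ and $e$ yields the asserted inequality, the coefficient of the $\log$-H\"older seminorm being $\eps^{-2}\log(1/\eps)^{-\theta}$, i.e.\ the modulus of continuity evaluated at the step size, divided by $\eps^2$.

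This is essentially the standard interpolation inequality (cf.\ \Cref{l.w12022} and the references given there), adapted to the weaker $\log$-modulus, and I do not anticipate a real obstacle; the only care needed is bookkeeping. The points to watch are: (i) the $\log$ factor must land in the \emph{denominator} — it is $\log(1/h)^{-\theta}$, the value of the modulus at the step, and this is exactly what makes the resulting bound in \Cref{l.w02102} integrable in $t$ near $t=0$; (ii) one must keep $h < 1/2$ so that the $\log$-H\"older seminorm controls increments over displacements of size $h$ (reducing to $\eps < 1/2$, or in any case $\eps$ less than some absolute constant, is harmless since for larger $\eps$ one simply uses a fixed step, the inequality then reducing to $\|D^2 u\|_{L^\infty(Q_r)} \lesssim [u]_{\log(1/C)^{-\theta}(Q_r)} + [u]_{C^{2,\alpha}(Q_r)}$); and (iii) because $u$ is defined on all of $\R^d$, the stencil $\{v_0, v_0 \pm he\}$ never leaves the domain, so no boundary difficulty arises — at worst the right-hand seminorms are measured on a marginally larger set, which is harmless in the applications.
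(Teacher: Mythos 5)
Your approach is genuinely different from the paper's and, modulo one gap discussed below, it is sound. You use a single second-order (centered) difference quotient in the $v$-variable, bound it from above by the $\log$-modulus and from below (after subtracting the Taylor polynomial) by $h^\alpha[u]_{C^{2,\alpha}}$, and conclude in one step. The paper instead first controls $\|Du\|_{L^\infty(Q_r)}$ via a \emph{first-order, one-sided} finite difference $u(v_0+\eps\bar v)-u(v_0)$ in a single carefully chosen direction, and then invokes a standard interpolation inequality between $\|Du\|_{L^\infty}$, $\|D^2u\|_{L^\infty}$, and $[D^2u]_{C^\alpha}$ followed by an absorption. Your route is shorter and avoids the auxiliary interpolation step; you also correctly identified that the displayed constant $\log(\eps)^\theta$ in the lemma statement is a misprint for $\log(1/\eps)^{-\theta}$ (the paper's own proof and its application in the proof of \Cref{l.w02102} both use $\log(1/\eps)^{-\theta}/\eps^2$), which is reassuring.

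The genuine gap is the domain bookkeeping, which you acknowledge in point (iii) but dismiss too quickly. The $\log$-H\"older and $C^{2,\alpha}$ seminorms on the right-hand side are taken over $Q_r$ and hence only control increments of $u$ whose endpoints (and, for the Taylor remainder, whose connecting segments) lie in $Q_r$. A \emph{centered} stencil $\{v_0, v_0\pm he\}$ cannot be kept in $B_r$ when $v_0$ is near $\partial B_r$, no matter the direction $e$, and the polarization step compounds this by requiring stencils in all directions $e_i$ and $(e_i+e_j)/\sqrt2$ simultaneously. So as written you prove a variant of the lemma with an enlarged domain on the right (or a shrunk domain on the left), not the lemma as stated. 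That variant would in fact suffice for the application in \Cref{l.w02102}, but this is precisely the nontrivial point the paper's proof handles: the whole purpose of the somewhat intricate geometric construction of the inward-pointing unit vector $\bar v$ with $v_0+\eps\bar v\in Q_r$ and $|\bar v\cdot Du(v_0)|\gtrsim|Du(v_0)|$ is to guarantee a one-sided stencil that stays inside $Q_r$ — and a one-sided stencil in an adaptively chosen direction is exactly what a centered stencil cannot provide. To repair your argument while keeping its spirit, you could replace $\Delta_h^2 u(v_0)$ with the one-sided second difference $u(v_0+2he)-2u(v_0+he)+u(v_0)$ and then supply the (still nontrivial) geometric argument that, for each pair $(i,j)$, one can choose a small family of unit vectors $e$ pointing into $B_r$ from $v_0$ whose span recovers $\partial_{v_iv_j}^2u(v_0)$ with uniformly bounded coefficients; alternatively, state and prove the inner/outer-domain variant and note explicitly why it suffices downstream.
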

\begin{Lemma}\label{l.logH_scaling}
	We have, for $t_0 < 1/2$,
	\be\label{e.c022501}
		[f_{z_0}]_{\log(\frac{1}{C})^{-\theta/2}(Q_1)}
		\lesssim [f]_{\log(\frac{1}{C})^{-\theta}(Q_{t_0/2}(z_0))} \log\Big(\frac{1}{t_0}\Big)^{-\theta/2}.
	\ee
\end{Lemma}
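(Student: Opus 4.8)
The plan is to change variables in the velocity slot of~\eqref{e.f_z_0} and then apply an elementary inequality for the logarithm that converts the shrinking of velocity distances into the halving of the exponent $\theta$ together with the gain $\log(1/t_0)^{-\theta/2}$.

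First I would fix $(t,x,v),(t,x,v')\in Q_1$ with $|v-v'|<1/2$ and observe that, in the defining formula~\eqref{e.f_z_0}, the time and space coordinates of the argument of $f$ do not depend on $v$. Hence $f_{z_0}(t,x,v)$ and $f_{z_0}(t,x,v')$ are values of $f$ at two points of $\R_{+}\times\R^{6}$ that agree in $(t,x)$ and differ in velocity by $r_0 S(v-v')$. Next I would record the key scaling bound $\|r_0 S\|_{\mathrm{op}}\le\sqrt{t_0/2}$: the matrix $S$ in~\eqref{e.w05201} has operator norm $\langle v_0\rangle^{1+\gamma/2}$, so by~\eqref{e.w05202}, and using $\min(1,\sqrt{t_0/2})=\sqrt{t_0/2}$ for $t_0<1/2$, one has $\|r_0 S\|_{\mathrm{op}}=\langle v_0\rangle^{(1+\gamma/2)-(1+\gamma/2)_+}\sqrt{t_0/2}\le\sqrt{t_0/2}$, since the exponent is nonpositive. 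Writing $h:=|r_0 S(v-v')|$, this gives $h\le\sqrt{t_0/2}\,|v-v'|<\tfrac12\sqrt{t_0/2}<\tfrac12$, so the two velocities are within $1/2$ of each other, and (since these two image points lie in $Q_{t_0/2}(z_0)$, a routine check from~\eqref{e.w05201}--\eqref{e.f_z_0}) the $\log$-H\"older seminorm of $f$ applies:
\[
	|f_{z_0}(t,x,v)-f_{z_0}(t,x,v')|
		\le [f]_{\log(1/C)^{-\theta}(Q_{t_0/2}(z_0))}\,(\log(1/h))^{-\theta}.
\]

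To finish, I would extract from $h\le\sqrt{t_0/2}\,|v-v'|$ the two separate lower bounds $\log(1/h)\ge\log(1/\sqrt{t_0/2})=\tfrac12\log(2/t_0)\ge\tfrac12\log(1/t_0)$ and $\log(1/h)\ge\log(1/|v-v'|)$, both positive because $h<1$. Splitting the power and inserting one bound into each factor,
\[
	(\log(1/h))^{\theta}
		=(\log(1/h))^{\theta/2}\,(\log(1/h))^{\theta/2}
		\ge \Bigl(\tfrac12\log(1/t_0)\Bigr)^{\theta/2}(\log(1/|v-v'|))^{\theta/2},
\]
so that $(\log(1/h))^{-\theta}\le 2^{\theta/2}\,(\log(1/t_0))^{-\theta/2}(\log(1/|v-v'|))^{-\theta/2}$. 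Combining with the previous display, dividing by $(\log(1/|v-v'|))^{-\theta/2}$, and taking the supremum over all admissible pairs yields the claimed estimate. (The arithmetic--geometric mean inequality $a+b\ge 2\sqrt{ab}$ with $a=\tfrac12\log(1/t_0)$, $b=\log(1/|v-v'|)$ gives the same conclusion with a better constant.)

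I do not expect a genuine obstacle. The only slightly delicate points are (i) the bookkeeping with~\eqref{e.w05201}--\eqref{e.f_z_0} needed to verify $\|r_0 S\|_{\mathrm{op}}\le\sqrt{t_0/2}$ and to place the two image points inside the cube on which $[f]_{\log(1/C)^{-\theta}}$ is measured, and (ii) recognizing that shrinking velocity distances by the factor $\sqrt{t_0/2}$ necessarily costs a loss from a $\theta$- to a $\theta/2$-logarithmic modulus while producing the compensating gain $\log(1/t_0)^{-\theta/2}$ — which is exactly what the power-splitting step encodes. I would also remark that no $\langle v_0\rangle$-weight is needed on the right-hand side, since the $\langle v_0\rangle$-factor absorbed into $r_0$ only increases $\log(1/h)$ and is therefore harmless; and that if $[\cdot]_{\log(1/C)^{-\theta}}$ is taken to include an $x$-H\"older component, that part is handled in the same way but more easily, the spatial difference being scaled by $r_0^{3}S$ with $\|r_0^{3}S\|_{\mathrm{op}}\le(t_0/2)^{3/2}$, which produces a factor $t_0^{\alpha/2}\lesssim\log(1/t_0)^{-\theta/2}$.
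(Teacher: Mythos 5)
Your argument is correct and is essentially the paper's proof: both bound the image-velocity difference by $\sqrt{t_0/2}\,|v-v'|$ using~\eqref{e.w05201}--\eqref{e.w05202} (and the fact that the exponent $(1+\gamma/2)-(1+\gamma/2)_+$ is nonpositive), apply the $\log(1/C_v)^{-\theta}$ modulus of $f$ on $Q_{t_0/2}(z_0)$, and split the resulting $\bigl(\log\tfrac{1}{\sqrt{t_0/2}}+\log\tfrac{1}{|v-v'|}\bigr)^{-\theta}$ into the product $\bigl(\log\tfrac1{t_0}\bigr)^{-\theta/2}\bigl(\log\tfrac1{|v-v'|}\bigr)^{-\theta/2}$ -- your power-splitting and the paper's AM--GM are the same inequality. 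The treatment of the $x$-H\"older component, which you sketch at the end via $\|r_0^3 S\|\le(t_0/2)^{3/2}$ and $t_0^{\alpha/2}\lesssim\log(1/t_0)^{-\theta/2}$, also matches what the paper does.
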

The proofs of these two lemmas are also postponed to \Cref{s.technical}. 
We now prove the lemma on the scaling of the Schauder estimates.
\begin{proof}[Proof of \Cref{l.w02102}]
Throughout the proof we assume that
\be\notag
	[\vv^m f]_{\logalpha([t_0/4,t_0]\times \R^6)}
		< \infty.
\ee
If this were not true, then the claim in \Cref{l.w02102} follows immediately.

Fix $\eps\in(0,1/2)$ to be determined.  Applying our $\log$-H\"older interpolation lemma (\Cref{l.w01081}), we see
\begin{equation}\label{e.c51410}
	\|D_{v}^{2}f_{z_0}\|_{L^\infty (Q_{1/2})}
		\lesssim
			\frac{\log(1/\eps)^{-\theta/2}}{\eps^2}[f_{z_0}]_{\log(\frac{1}{C_v})^{-\theta/2}(Q_{1/2})}
			+ \eps^{\alpha}[D^2_v f_{z_0}]_{\calpha(Q_{1/2})}.
\end{equation}
Clearly the first term in~\eqref{e.c51410} can be bounded by simply removing the scaling.  Indeed, applying \Cref{l.logH_scaling}, we find
\be\label{e.c51411}
	[f_{z_0}]_{\log(\frac{1}{C})^{-\theta/2}}
		\lesssim  \left( \log \frac{1}{t_0}\right)^{-\theta/2}
			[f]_{\logalpha(Q_{t_0/2}(z_0))}.
\ee

For the second term in~\eqref{e.c51410}, we require our Schauder estimates \Cref{t.Schauder}.  Applying this yields
\be\notag
	[D^2_v f_{z_0}]_{\calpha(Q_{1/2})}
		\lesssim \left(
			1 + [\bar C]_{\calpha(Q_{3/4})}
			+ [\bar A]_{\calpha(Q_{3/4})}^{1 + \frac{2}{\alpha}}
			\right)
			\|f_{z_0}\|_{L^\infty(Q_{3/4})}.
\ee
We note that the statement of \Cref{t.Schauder} involves a cylinder $Q_1$ on the right hand side instead of $Q_{3/4}$; however, it is a simple scaling argument to obtain the above, so we omit the details.  We use this cylinder in order to obtain an estimate below insulated from $t=0$ by $t_0/4$.

Using~\eqref{e.c51413} and \Cref{l.w01161}, we obtain
\be\notag
	\begin{split}
	&[D^2_v f_{z_0}]_{\calpha(Q_1)}
		\lesssim \vvo^{-k} \bigg(
			1 + t_0^{1+\frac{\alpha}{2}} \vvo^{\gamma} \|\vv^m f\|_{C^\alpha_v([t_0/4,t_0])} + 
			\\&
			\phantom{MMMMMMMMM}
			\left(t_0^\frac{\alpha}{2} \vvo^{2-\alpha} \|f\|_{L^{\infty,k}([t_0/4,t_0])}
			+ t_0^\frac{\alpha}{2} \vvo^2 \|\vv^m f\|_{C^{\alpha/3}_x([t_0/4,t_0])}
			\right)^{1 + \frac2{\alpha}}\bigg)
			\|f\|_{L^{\infty,k}}.
	\end{split}
\ee
We recall, by assumption, $\|f\|_{L^{\infty,k}}$ is finite.  This is inherited from \cite[Theorem~1.2]{HST2019rough}.  Hence,
\be\label{e.c51416}
	\begin{split}
		[D^2_v f_{z_0}]_{\calpha(Q_1)}
		\lesssim\
			&\vvo^{-k + \gamma} t_0^{1 + \frac{\alpha}{2}} [\vv^m f]_{C^\alpha_v([t_0/4,t_0])}		
		\\&
		+ \vvo^{-k + 2 + \alpha} \left(
			1 
			+ \|\vv^m f\|_{C^{\alpha/3}_x([t_0/4,t_0])}
			\right)^{1 + \frac2{\alpha}}.
	\end{split}
\ee

Using~\eqref{e.c51411} and~\eqref{e.c51416} in~\eqref{e.c51410}, we find
\be\notag
	\begin{split}
		\|D^2_v f_{z_0}\|_{L^\infty(Q_{1/2})}
			\lesssim &\frac{\log(1/\eps)^{-\theta/2}}{\eps^2}
				\left(\log \frac{1}{t_0}\right)^{-\frac{\theta}{2}} [f]_{\logalpha(Q_{t_0/4}(z_0))}
			\\&
				+ \eps^{\alpha} \vvo^{-k + 2 + \alpha} \left(
			1 
			+ \|\vv^m f\|_{C^{\alpha/3}_x([t_0/4,t_0])}
			\right)^{1 + \frac2{\alpha}}
			\\& + \eps^\alpha \vvo^{-k + \gamma} t_0^{1 + \frac{\alpha}{2}} [\vv^m f]_{C^\alpha_v([t_0/4,t_0])}.
	\end{split}
\ee
Undoing the change of variables (similar to~\eqref{e.c51417}) and combining terms yields
\be\label{e.c51418}
	\begin{split}
		\frac{t_0}{\vvo^2}
			\|D^2_v f\|_{L^\infty(Q_{t_0/2}(z_0))}
			\lesssim\ &\frac{\log(1/\eps)^{-\theta/2}}{\eps^2}
				\left(\log \frac{1}{t_0}\right)^{-\frac{\theta}{2}} [f]_{\logalpha(Q_{t_0/4}(z_0))}
			\\&
				+ \eps^{\alpha} \vvo^{-k + 2 + \alpha} \left(
			1 
			+ \|\vv^m f\|_{C^{\alpha/3}_x([t_0/4,t_0]\times \R^6)}
			\right)^{1 + \frac2{\alpha}}
			\\& + \eps^\alpha \vvo^{-k + \gamma} t_0^{1 + \frac{\alpha}{2}} [\vv^m f]_{C^\alpha_v([t_0/4,t_0])}.
	\end{split}
\ee

Next, we take
\be\notag
	\eps
		= \min\left\{1/4, \log(1/t_0)^{-\frac{\theta}{2(2+ \alpha)}}\right\}
\ee
so that~\eqref{e.c51418} becomes
\be\notag
	\begin{split}
	\frac{t_0}{\vvo^2}
			&\|D^2_v f\|_{L^\infty(Q_{t_0/2}(z_0))}
			\lesssim \left( \log \frac{1}{t_0}\right)^{-\frac{\alpha}{2(2 + \alpha)}\frac{\theta}{2}} [f]_{\logalpha(Q_{t_0/4}(z_0))}
			\\&
				+ \left( \log \frac{1}{t_0}\right)^{-\frac{\alpha}{2(2 + \alpha)}\frac{\theta}{2}}\vvo^{-k + 2 + \alpha} \left(
			1 
			+ \|\vv^m f\|_{C^{\alpha/3}_x([t_0/4,t_0]\times \R^6)}
			\right)^{1 + \frac2{\alpha}}
			\\& + \left( \log \frac{1}{t_0}\right)^{-\frac{\alpha}{2(2 + \alpha)}\frac{\theta}{2}} \vvo^{-k + \gamma} t_0^{1 + \frac{\alpha}{2}} [\vv^m f]_{C^\alpha_v([t_0/4,t_0])}.
	\end{split}
\ee
Dividing by $t_0$, multiplying by $\vvo^m$, increasing $k$ if necessary, and taking the supremum over all choices of $(x_0,v_0)$, we find
\be\notag
	\begin{split}
	\|D^2_v f\|_{L^{\infty,m-2}([t_0/2,t_0])}
		\lesssim\
			 &\frac{1}{t_0 \left( \log \frac{1}{t_0}\right)^{\frac{\alpha}{2(2 +  \alpha)}\frac{\theta}{2}}} \left(
				1 
				+ \|\vv^m f\|_{\logalpha([t_0/4,t_0]\times \R^6)}
				\right)^{1 + \frac2{\alpha}}
			\\&
			+ \left( \log \frac{1}{t_0}\right)^{-\frac{\alpha}{2(2 + \alpha)}\frac{\theta}{2}} t_0^\frac{\alpha}{2} [\vv^m f]_{C^\alpha_v([t_0/4,t_0])}.
	\end{split}
\ee
In order to remove the last term above, it suffices to apply \cite[Lemma~B.2]{HST2019rough} (which is analogous to \Cref{l.weight_interpolation} but stated for standard H\"older spaces) to obtain
\be
	\left( \log \frac{1}{t_0}\right)^{-\frac{\alpha}{2(2 + \alpha)}\frac{\theta}{2}} t_0^\frac{\alpha}{2} [\vv^m f]_{C^\alpha_v([t_0/4,t_0])}
		\lesssim \left( \log \frac{1}{t_0}\right)^{-\frac{\alpha}{2(2 + \alpha)}\frac{\theta}{2}} t_0^\frac{\alpha}{2} \left(\| D^2_vf\|_{L^{\infty,m-2}([t_0/4,t_0])}^\frac{\alpha}{2}
			+ 1\right)
\ee
We remind the reader that $\|f\|_{L^{\infty,k}} \lesssim 1$.  This concludes the proof.
\end{proof}

\subsection{Proof of technical lemmas}\label{s.technical}

We begin by establishing the H\"older regularity of the transformed coefficients $\bar A$ and $\bar C$.  In order to make the notation more compact, we define, for any $z$,
\be\notag
	\tilde z
		:= z_0 \circ (Sz)_{r_0}
		= (r_0^2 t + t_0,
			r_0^3 S x + x_0 + r_0^2 t v_0,
			r_0 S v + v_0).
\ee
As $r_0$ and $z_0$ remain fixed in the following proof, there is no risk of confusion.

\begin{proof}[Proof of \Cref{l.w01161}]
We note that the proofs for $\bar C$ and $\bar A$ are essentially the same.  Hence, we show only the proofs of $(x,v)$-regularity of $\bar C$ and omit the proof of $x$-regularity of $\bar A$.  We include the proof of $v$-regularity of $\bar A$ in order to show how to extract $v$-regularity of $\bar A$ without using the $v$-regularity of $f$.

The proof of $x$-regularity is essentially the same as in \cite[Lemma~2.7]{HST2019rough}.  However, since our statement is a bit different (here there is no loss of regularity and we have less strict requirements on $m$), we provide the proof for completeness.

	We begin by fixing any $z, z' \in Q_1$ with $t = t'$ and $v = v'$.  Then
	\begin{equation}\notag
	\begin{split}
	|\bar C(z)-\bar C(z')|
	&\lesssim	
	\int |w|^{\gamma}|f(\tilde t, \tilde x, \tilde v-w)-f(\tilde t, \tilde x', \tilde v-w)|\,dw
	\\&\lesssim
		\int |w|^{\gamma} |r_0^3(Sx - Sx')|^{\alpha/3}
		\langle \tilde v - w\rangle^{-m} \|\vv^m f\|_{C^{\alpha/3}_x([t_0/2,t_0]\times \R^6)}\,dw.
	\end{split}
	\ee
Recalling the definitions of $r_0$ and $S$ in~\eqref{e.w05201}-\eqref{e.w05201} and that $|v| \leq 1$, we notice that
\be\label{e.c51409}
	\langle \tilde v - w\rangle^{-m}
		= \langle r_0 S v + v_0 - w\rangle^{-m}
		\lesssim \langle v_0 - w\rangle^{-m}.
\ee
Additionally,
\be\notag
	|\tilde x - \tilde x'|^{\alpha/3}
		= |r_0^3 (Sx - Sx')|^{\alpha/3}
		\lesssim t_0^{\alpha/2} |x-x'|^{\alpha/3}.
\ee
Hence,
\be\notag
	\begin{split}
	|\bar C(z) - \bar C(z')|
		&\lesssim  t_0^{\alpha/2} |x-x'|^{\alpha/3}
		\int |w|^{\gamma} 
		\langle \tilde v - w\rangle^{-m} \|\vv^m f\|_{C^{\alpha/3}_x([t_0/2,t_0]\times \R^6)}\,dw
		\\&\lesssim
		t_0^{\alpha/2} |x-x'|^{\alpha/3}\|\vv^m f\|_{C^{\alpha/3}_x([t_0/2,t_0]\times \R^6)} \langle \tilde v \rangle^{\gamma}
		\\&\lesssim
		t_0^{\alpha/2} |x-x'|^{\alpha/3}\|\vv^m f\|_{C^{\alpha/3}_x([t_0/2,t_0]\times \R^6)} \vvo^{\gamma}.
	\end{split}
\ee
In the last line, we used that $\langle \tilde v\rangle \approx \vvo$.  This concludes the proof of $x$-regularity for $\bar C$.  The proof of $v$-regularity is similar.

Now we establish the $v$-regularity of $\bar A$.  Let $z,z'\in Q_1$ with $t=t'$ and $x=x'$.  Changing variables, we have
\begin{equation}\notag
	\begin{split}
	\bar C(z)-\bar C(z')
		= c_\gamma \int (|w|^\gamma - |w+\tilde v' - \tilde v|^\gamma) f(\tilde t, \tilde x, \tilde v - w) \, dw.	
	\end{split}
\end{equation}
Let $R = 2|\tilde v - \tilde v'|$ and decompose the integral into two parts:
\be\notag
	|\bar C(z) - \bar C(z')|
		\lesssim \left(\int_{B_R} + \int_{B_R^c}\right)||w|^\gamma - |w+\tilde v' - \tilde v|^\gamma| f(\tilde t, \tilde x, \tilde v - w) \, dw
		= I_1 + I_2.
\ee

For $I_1$, notice that (recall the definitions of $r_0$ and $S$ in~\eqref{e.w05201}-\eqref{e.w05202} and that $|v|, |v'| \leq 1$)
\be\label{e.c51406}
	\langle \tilde v - w\rangle^{-m}
		= \langle r_0 S v + v_0 - w\rangle^{-m}
		\lesssim \langle v_0 - w\rangle^{-m}.
\ee
On the domain of $I_1$, clearly $\langle v_0 - w\rangle \approx \vvo$.  Hence,
\be\label{e.c51407}
	\begin{split}
	I_1
		&\lesssim \int_{B_R} (|w|^\gamma+|w + \tilde v' - \tilde v|^\gamma) \langle \tilde v - w\rangle^{-k} \|f\|_{L^{\infty,k}} \, dw
		\lesssim \vvo^{-k} \|f\|_{L^{\infty,k}} R^{3+\gamma}
		\\&\lesssim \vvo^{-k} \|f\|_{L^{\infty,k}} |\tilde v- \tilde v'|^{3+\gamma}.
	\end{split}
\ee

We now consider the final integral $I_2$.  Using again~\eqref{e.c51406}, we find
\be\label{e.c51408}
	\begin{split}
		I_2
			&\lesssim \int_{B_R^c}
				|w|^{\gamma - \alpha}
					|\tilde v - \tilde v'|^\alpha
					\langle v_0 - w\rangle^{-k}
					\|f\|_{L^{\infty,k}} \, dw
			\\&\leq
				|\tilde v - \tilde v'|^\alpha \|f\|_{L^{\infty,k}}
			 \int
				|w|^{\gamma - \alpha}
					\langle v_0 - w\rangle^{-k}
				\, dw
			\lesssim |\tilde v - \tilde v'|^\alpha \|f\|_{L^{\infty,k}}
				\vvo^{\gamma - \alpha}.
	\end{split}
\ee
In the third inequality we used that $\gamma - \alpha > -3$ so that the integral is finite.

Combining~\eqref{e.c51407},~\eqref{e.c51408}, and recalling the definitions of $r_0$ and $S$ (see~\eqref{e.w05201}-\eqref{e.w05202}), we arrive at
\be\notag
	|\bar C(z) - \bar C(z')|
		\lesssim t_0^{\alpha/2} \vvo^{\gamma - \alpha} \|f\|_{L^{\infty,k}} |v-v'|^{\alpha},
\ee
which concludes the proof.
\end{proof}

We next prove the $\log$-H\"older interpolation lemma. 
\begin{proof}[Proof of \Cref{l.w01081}]
	We begin by obtaining a bound on $\|D u\|_{L^\infty(Q_r)}$.  Let $v_0 \in Q_r$ be a point such that
	\be\label{e.c51401}
		\|D u\|_{L^\infty(Q_r)}
			\leq 2 |D u(v_0)|.
	\ee
	We claim that there is $\bar v$ so that
	\be\label{e.c51402}
		v_0 + \eps\bar v \in Q_r,
		\quad
		|\bar v| = 1,
		\quad\text{ and }\quad
		|\bar v \cdot Du(v_0)| \gtrsim |Du(v_0)|.
	\ee
	This is a basic (though somewhat complicated) plane geometry exercise that we postpone to the end of the proof.

	A Taylor expansion at $0$ yields, for some $\theta \in [0,1]$,
	\be\notag
		u(v_0 + \eps \bar v) - u(v_0)
			= \eps \bar v \cdot D u(v_0)
				+ \frac{\eps^2}{2} \bar v \cdot D^2 u(v_0 + \theta \eps\bar v) \bar v.
	\ee
	Rearranging this, recalling~\eqref{e.c51401} and~\eqref{e.c51402}, and dividing by $\eps$, we arrive at
	\be\label{e.c51403}
		\begin{split}
		\|D u\|_{L^\infty(Q_r)}
			&\leq 2 |D u(v_0)|
			\lesssim \frac{|u(v_0 + \eps \bar v) - u(v_0)|}{\eps} 
				+ \eps |D^2 u(v_0+\theta \eps \bar v)|
			\\&
			\leq \frac{\log(1/\eps)^{-\theta}}{\eps}
				[u]_{\log(\frac{1}{C})^{-\theta}(Q_r)} + \eps \|D^2 u\|_{L^\infty(Q_r)}.
		\end{split}
	\ee
	
	With~\eqref{e.c51403} in hand, we now use interpolation to obtain a bound on $D^2_v u$.  Indeed, using standard interpolation estimates (see, e.g., \cite[Proposition 2.10]{imbert2018schauder}), we have
	\be\notag
		\|D^2u\|_{L^\infty(Q_r)}
			\lesssim \left(\frac{\eps}{\delta}\right)^\alpha [D^2 u]_{C^\alpha(Q_r)}
				+ \frac{\delta}{\eps} \|D u\|_{L^\infty(Q_r)},
	\ee
	where $\delta>0$ is a parameter to be chosen.  Combining this with~\eqref{e.c51403}, we find
	\be\notag
		\begin{split}
			\|D^2u\|_{L^\infty(Q_r)}
				\lesssim \left(\frac{\eps}{\delta}\right)^\alpha [D^2 u]_{C^\alpha(Q_r)}
				+ \frac{\delta}{\eps} \left( \frac{\log(1/\eps)^{-\theta}}{\eps}
				[u]_{\log(\frac{1}{C})^{-\theta}(Q_r)} + \eps \|D^2 u\|_{L^\infty (Q_r)}\right).
		\end{split}
	\ee
	After choosing $\delta$ sufficiently small, depending only on the implied constant, we may absorb the $\|D^2 u\|_{L^\infty}$ term from the right hand side into the left hand side.  This yields
	\be\notag
		\begin{split}
			\|D^2u\|_{L^\infty(Q_r)}
				\lesssim \eps^\alpha [D^2 u]_{C^\alpha(Q_r)}
				+ \frac{\log(1/\eps)^{-\theta}}{\eps^2}
				[u]_{\log(\frac{1}{C})^{-\theta}(Q_r)},
		\end{split}
	\ee	
	which concludes the proof up to establishing~\eqref{e.c51402}.

	We now prove~\eqref{e.c51402}. At the expense of a multiplicative constant, we may assume that $\eps < r/10$.  
	Without loss of generality, we may assume that
	\be\label{e.c51414}
		\frac{Du(v_0)}{|Du(v_0)|} \cdot v_0 \leq 0.
	\ee
	Were this not the case, we work with $- Du(v_0)/|Du(v_0)|$ instead.  Then, we let
	\be\notag
		\bar v =  \frac{1}{10} \frac{Du(v_0)}{|Du(v_0)|}
				- \mu v_0,
	\ee
	where $\mu$ is chosen so that $|\bar v| = 1$.  Clearly, due to~\eqref{e.c51414},
	\be\label{e.c51415}
		|v_0| \mu \in [9/10,1].
	\ee
	Notice that
	\be\notag
		\bar v \cdot D u(v_0)
			= \frac{1}{10}|Du(v_0)| - \mu v_0 \cdot \frac{Du(v_0)}{|Du(v_0)|}
			\geq \frac{1}{10}|Du(v_0)|,
	\ee
	where the second inequality holds due to~\eqref{e.c51414}.  Next, using~\eqref{e.c51414} again as well as the fact that $\eps < r/10$,
	\be\notag
		|v_0 + \eps \bar v|
			= \Big|(1 - \eps \mu)v_0 + \frac{\eps}{10} \frac{Du(v_0)}{|Du(v_0)|}\Big|
			\leq |1 - \eps \mu| |v_0|
				+ \frac{\eps}{10}.
	\ee
	Consider the case when $\eps \mu \geq 1$, then, using~\eqref{e.c51415}
	\be\notag
		|v_0 + \eps \bar v|
			\leq \eps \mu |v_0|
				+ \frac{\eps}{10}
			\leq \frac{11 \eps}{10}
			< r.
	\ee
	which implies that $v_0 + \eps \bar v \in Q_r$.
	
	Next consider the case when $\eps \mu < 1$.  Then
	\be\notag
		|v_0 + \eps \bar v|
			\leq |v_0| - \frac{9 \eps}{10}
				+ \frac{\eps}{10}
			< |v_0|
			< r,
	\ee
	which again implies that $v_0 + \eps \bar v \in Q_r$.  Thus, we have established~\eqref{e.c51402}, which concludes the proof.
\end{proof}

We now prove the final technical lemma, \Cref{l.logH_scaling}, which involves the time scaling of the $\log$-H\"older norm of $f_{z_0}$, defined in~\eqref{e.f_z_0}.
\begin{proof}[Proof of \Cref{l.logH_scaling}]
Fix any $z\neq \tilde z \in Q_1$, with $t = \tilde t$, and notice that
\be\notag
	(r_0^2 t + t_0, r_0^3 S x + x_0, r_0 S v + v_0),
		(r_0^2 \tilde t + t_0, r_0^3 S \tilde x + x_0, r_0 S \tilde v + v_0)
		\in Q_{t_0/2}(z_0).
\ee
Hence,
\be\notag
	|f_{z_0}(z) - f_{z_0}(\tilde z)|
		\leq (r_0^\alpha |Sx - S\tilde x |^{\alpha/3} + \log(1/|r_0 (Sv-S\tilde v)|)^{-\theta})
			[f]_{\logalpha(Q_{t_0/2}(z_0))}.
\ee
From the definition of $S$, it is clear that
\be\notag
	r_0^3|S(x-\tilde x)|
		\lesssim t_0^{3/2}  |x-\tilde x|
	\quad\text{ and }\quad
	r_0|S(v-\tilde v)|
		\leq \sqrt{t_0} |v-\tilde v|.
\ee
Hence,
\be\label{e.c51405}
	\frac{|f_{z_0}(z) - f_{z_0}(\tilde z)|}{[f]_{\logalpha(Q_{t_0/2}(z_0))}}
		\lesssim  t_0^{\alpha/2} |x - \tilde x|^{\alpha/3}
			+ \left(\log\frac{1}{\sqrt{t_0}} + \log\frac{1}{|v-\tilde v|}\right)^{-\theta}.
\ee
Young's inequality yields
\be\notag
	\left(\log\frac{1}{\sqrt{t_0}} + \log\frac{1}{|v-\tilde v|}\right)^{-\theta}
		\lesssim
			\left(\log\frac{1}{\sqrt{t_0}}\right)^{-\theta/2}\left(\log\frac{1}{|v-\tilde v|}\right)^{-\theta/2}
\ee
and, it is straightforward to see that
\be\notag
	t_0^{\alpha/2}
		\lesssim \left(\log\frac{1}{t_0}\right)^{-\theta/2}.
\ee
Returning to~\eqref{e.c51405}, we find
\be\notag
	\frac{|f_{z_0}(z) - f_{z_0}(\tilde z)|}{[f]_{\logalpha(Q_{t_0/2}(z_0))}}
		\lesssim \left(\log\frac{1}{t_0}\right)^{-\theta/2}
			\left( |x-\tilde x|^{\alpha/3} + \left(\log\frac{1}{|v-\tilde v|}\right)^{-\theta/2}\right),
\ee
which concludes the proof.
\end{proof}

\begin{appendix}

\section{Computation of the fundamental solution~\eqref{e.gamma_a}.}\label{appendix}
In this section, we establish the form of the fundamental solution $\Gamma_{\bar a}$ for the $(x,v)$-homogeneous  kinetic Fokker-Planck equation; that is, we prove \Cref{p.gamma_a}.
\begin{proof}[Proof of \Cref{p.gamma_a}]
	We first notice that it is enough to find $\Gamma_{\bar a}$ such that the solution to the initial value problem
	\be\label{e.c050101}
		(\partial_t + v\cdot\nabla_x) f
			= \tr(\bar a(t) D_v^2 f),
	\ee
	with suitably decaying initial data at $t=\tilde t$ is given by
	\be\label{e.c050102}
		f(t,x,v)
			= \int_{\R^d}\int_{\R^d} \Gamma_{\bar a}(t,x - \tilde x - (t-\tilde t) \tilde v, v - \tilde v;\tilde t) f(\tilde t,\tilde x, \tilde v)\, d\tilde x d \tilde v.
	\ee
	Indeed, it is simply an application of Duhamel's principle to go from~\eqref{e.c050102} to~\eqref{e.gamma_a}.  As $\tilde t$ plays essentially no role in the computations below, we simply set $\tilde t = 0$ and drop the ``$;0$'' notation.

	Next, we notice that~\eqref{e.c050102} is equivalent to
	\be\label{e.c050201}
		\hat f(t,\xi,\omega)
			= (2\pi)^d \hat f(0,\xi, \omega + \xi t) \hat \Gamma_{\bar a}(t,\xi,\omega).
	\ee
	Indeed, 
	taking the Fourier transform of~\eqref{e.c050102} and computing, we find
		\begin{equation}\label{e.w05021}
		\begin{split}
		&\hat f(t,\xi, \omega)
		=
		\frac{1}{(2\pi)^d}
		\int_{\R^d}\int_{\R^d}
		\left(
		\int_{\R^d}\int_{\R^d} \Gamma_{\bar a}(t,x - \tilde x - t \tilde v, v - \tilde v) f(0,\tilde x, \tilde v)\, d\tilde x d \tilde v
		\right)\exp{\{-ix\cdot \xi -iv\cdot \omega\} }\,dxdv
		\\&=
		\frac{1}{(2\pi)^d}
		\int_{\R^d}\int_{\R^d}
		\int_{\R^d}\int_{\R^d} \Gamma_{\bar a}(t,x - \tilde x - t \tilde v, v - \tilde v) f(0,\tilde x, \tilde v)
		\exp{\{-ix\cdot \xi -iv\cdot \omega\} }\, d\tilde x d \tilde vdxdv.
		\end{split}
		\end{equation}
		As shifts in `physical space' correspond to multiplication in `Fourier space', we have
		\be\notag
			\hat \Gamma_{\bar a}(t,\xi,\omega)
				= \frac{e^{-i (\tilde x + t\tilde v)\cdot \xi - i\tilde v \cdot \omega}}{(2\pi)^d}\int_{\R^d}\int_{\R^d}
					\Gamma_{\bar a}(t,x-\tilde x - t\tilde v, v - \tilde v) \exp\{- ix \cdot \xi - i v \cdot \omega\} \, dx dv.
		\ee
		Thus,
		\be\notag
			\begin{split}
			\hat f(t,\xi,\omega)
				&= \hat \Gamma_{\bar a}(t,\xi,\omega) \int_{\R^d} \int_{\R^d} f(0,\tilde x, \tilde v) \exp\{- i(\tilde x+ t\tilde v)\cdot \xi - i \tilde v\cdot \omega\} \, d\tilde x d\tilde v
				\\&
				= \hat \Gamma_{\bar a}(t,\xi,\omega) \int_{\R^d} \int_{\R^d} f(0,\tilde x, \tilde v) \exp\{- i\tilde x\cdot \xi - i \tilde v\cdot (\omega+\xi t)\} \, d\tilde x d\tilde v
				\\&
				= (2\pi)^d \hat\Gamma_{\bar a}(t,\xi,\omega) \hat f(0, \xi, \omega + \xi t).
			\end{split}
		\ee

	We now find $\Gamma_{\bar a}$ through the identity~\eqref{e.c050201}.  The first step is to take the Fourier transform of~\eqref{e.c050101} in $x$ and $v$ to obtain:
	\begin{equation}\notag
	\begin{split}
	\partial_{t} \hat f-\xi\cdot \nabla_{\omega}\hat f
	=
	-\bar a(t)|\omega|^{2}\hat f
	,
	\end{split}
	\end{equation}
	Next, letting $\hat F(t,\xi,\omega)=\hat f(t,\xi, \omega-\xi t)$,  we have
	\begin{equation}\notag
	\begin{split}
	\partial_{t} \hat F
	=
	-(\omega-\xi t)^{T} \bar a(t)(\omega-\xi t)\hat F.
	\end{split}
	\end{equation}
	Integrating this in time, we find 
	\begin{equation}\notag
	\begin{split}
	\hat F(t,\xi, \omega)
	&=
	\exp \left\{-\int_{0}^{t}(\omega-\xi s)\cdot \bar a(s)(\omega-\xi s)\,ds\right\}
	\hat F(0,\xi, \omega)
	\\&=
	\exp\left\{-\int_{0}^{t}(\omega-\xi s)\cdot \bar a(s)(\omega-\xi s)\,ds\right\}
	\hat f(0,\xi, \omega).
	\end{split}
	\end{equation}
	Therefore,
	\begin{equation}\label{e.c42501}
	\begin{split}
	\hat f(t,\xi, \omega)
	=
	\hat f(0,\xi, \omega+\xi t)
	\exp\Big\{-\int_{0}^{t}(\omega-\xi (s-t))\cdot \bar a(s)(\omega-\xi (s-t))\,ds\Big\}.
	\end{split}
	\end{equation}
	It follows from~\eqref{e.c050201} that
	\be\label{e.c050202}
		\hat \Gamma_{\bar a}(t,\xi,\omega)
			= \frac{1}{(2\pi)^d} \exp\Big\{ - \int_0^t (\omega - \xi(s-t))\cdot \bar a(s) (\omega - \xi(s-t)) \, ds \Big\}.
	\ee
	
	The remainder of the proof is in computing the inverse Fourier transform of~\eqref{e.c050202}.  We begin by computing that:
	\begin{equation}\label{e.c050203}
	\begin{split}
	\Gamma_{\bar a} (t,x, v)
	&=
	\frac{1}{(2\pi)^{2d}}
	\int \int
	e^{-\int_{0}^{t}(\omega-\xi (s-t))\cdot \bar a(\omega-\xi (s-t))\,ds+i x\cdot \xi+ i v\cdot \omega}
	\,d\omega d\xi
	\\&=
	\frac{1}{(2\pi)^{2d}}
	\int e^{-\xi\cdot(A_{2}-2tA_{1}+t^{2}A)\xi+  i x\cdot \xi} \left(\int
	e^{-\omega\cdot A\omega +(2A_1 \xi  -2 tA_0\xi -i v)\omega}
	\,d\omega\right)d\xi
	\\&=
	\frac{1}{(2\pi)^{2d}}
	\int e^{-\xi\cdot N_2(t)\xi+  i x\cdot \xi} \bar \Gamma_{\bar a}\,d\xi
	,
	\end{split}
	\end{equation}
	where we have introduced the notation
	\be\notag
		\begin{split}
		&N_1(t,\xi,v) = 2 A_1(t) \xi - 2 t A_0(t) \xi - i v,
			\qquad
		N_2(t) = A_2(t) - 2t A_1(t) + t^2 A_0(t),
		\\&\text{and }\quad
		\bar \Gamma_{\bar a}(t,\xi,v)
			= \int e^{-\omega \cdot A_0(t) \omega + N_1(t) \cdot \omega} \, d\omega.
		\end{split}
	\ee
%
%
%
	We simplify $\bar \Gamma_{\bar a}$ by completing the square:
	\begin{equation}\notag
	\begin{split}
	\bar \Gamma_{\bar a} (t,\xi, v)
	&=
	\int
		e^{
			-(\omega-\frac{1}{2}A_0^{-1}N_1)\cdot A_0(\omega-\frac{1}{2}A_0^{-1}N_1)
			+\frac{1}{4}N_1 \cdot A_0^{-1}N_1}
		\,d\omega
	\\&=
	e^{\frac{1}{4}N_1 \cdot A_0^{-1}N_1}
	\int
	e^{-(\omega^{}-\frac{1}{2}A_0^{-1}N_1) \cdot A_0(\omega^{}-\frac{1}{2}A_0^{-1}N_1)}
	\,d\omega
	=
	e^{\frac{1}{4}N_1\cdot A_0^{-1}N_1}\frac{\pi^{d/2}}{\sqrt{\det{A_0}}}.
	\end{split}
	\end{equation}
	Plugging this into~\eqref{e.c050203} and then completing the square for the $\xi$-integral, we find
	\begin{equation}\notag
	\begin{split}
	\Gamma_{\bar a} (t,x, v)
	&= 
	\frac{1}{2^{2d}\pi^\frac{3d}{2} \sqrt{\det{A_0}}}
	\int e^{ - \xi \cdot N_2 \xi +  i x\cdot \xi+\frac{1}{4}N_1\cdot A_0^{-1}N_1} 
	\,d\xi
	\\&=	\frac{1}{2^{2d}\pi^\frac{3d}{2} \sqrt{\det{A_0}}}
	e^{-\frac{v\cdot A_0^{-1}v}{4} - \frac{1}{4} q \cdot P^{-1} q}
	\int
	e^{-(\xi^{}-\frac{i}{2}P^{-1}q)\cdot P(\xi^{}-\frac{i}{2}P^{-1}q)}
	\,d\xi,
	\end{split}
\end{equation}
	where (recall $M$ from~\eqref{e.A_i})	
	\begin{equation}\notag
	\begin{split}
	&P
	=
	N_2-(tA_0-A_1)A^{-1}(tA_0-A_1)
	=
	A_{2}-A_{1}A_0^{-1}A_1
	\\&\quad\text{and }\quad
	q
	=
	x - vt + A_1 A_0^{-1} v
	=
	x-Mv.
	\end{split}
	\end{equation}
	Computing the the integral and simplifying, we find
	\be\notag
		\Gamma_{\bar a}(t,x,v)
			= \frac{1}{2^{2d} \pi^d \sqrt{\det(A_0) \det(P)}}
			e^{-\frac{v\cdot A_0^{-1}v}{4} - \frac{1}{4} q \cdot P^{-1} q}.
	\ee
	This concludes the proof.
\end{proof}

\section{Interpolation of weights between $L^{\infty,k}$ and $\logalpha$}

\begin{Lemma}\label{l.weight_interpolation}
	Fix any $\alpha, \mu \in (0,1)$ and any $\theta, k > 0$.  Suppose that
	\be\notag
		\varphi \in L^{\infty,m}(\R^3) \cap \logvalpha(\R^3).
	\ee
	Then $\vv^{(1-\mu)k} \varphi \in  \log\left(1/C_v\right)^{-\theta\mu}$ and
	\be\notag
		[\vv^{(1-\mu) k} \varphi]_{ \log\left(1/C_v\right)^{-\theta\mu}}
			\lesssim \|\varphi\|_{L^{\infty,k}}^{1-\mu}
				[\varphi]_{\logvalpha}^{\mu}
				+ \|\varphi\|_{L^{\infty, ((1-\mu)k -1)_+}}.
	\ee
\end{Lemma}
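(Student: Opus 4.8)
The plan is to prove the inequality pointwise and then take a supremum. Fix $v\neq v'\in\R^3$ with $|v-v'|<1/2$; by symmetry we may assume $\vv\ge\vvp$, and since $|v-v'|<1/2$ the two brackets are comparable, $\vv\approx\vvp$, with absolute constants. Then split
\[
  |\vv^{(1-\mu)k}\varphi(v)-\vvp^{(1-\mu)k}\varphi(v')|
    \le \vv^{(1-\mu)k}|\varphi(v)-\varphi(v')|
      + |\varphi(v')|\,\bigl|\vv^{(1-\mu)k}-\vvp^{(1-\mu)k}\bigr|,
\]
and estimate the two terms separately; in each the aim is to produce a factor $\log(1/|v-v'|)^{-\theta\mu}$ times a piece of the claimed right-hand side.

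For the first term I would interpolate between the two obvious bounds
\[
  |\varphi(v)-\varphi(v')|\le[\varphi]_{\logvalpha}\,\log(1/|v-v'|)^{-\theta}
  \qquad\text{and}\qquad
  |\varphi(v)-\varphi(v')|\le|\varphi(v)|+|\varphi(v')|\lesssim\vv^{-k}\|\varphi\|_{L^{\infty,k}},
\]
raising the first to the power $\mu$, the second to the power $1-\mu$, and multiplying, which gives
\[
  |\varphi(v)-\varphi(v')|\lesssim\vv^{-(1-\mu)k}\,\|\varphi\|_{L^{\infty,k}}^{1-\mu}\,[\varphi]_{\logvalpha}^{\mu}\,\log(1/|v-v'|)^{-\theta\mu}.
\]
The prefactor $\vv^{(1-\mu)k}$ then cancels the negative power exactly, leaving $\|\varphi\|_{L^{\infty,k}}^{1-\mu}[\varphi]_{\logvalpha}^{\mu}\log(1/|v-v'|)^{-\theta\mu}$.

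For the second term, the mean value theorem applied to $t\mapsto t^{(1-\mu)k}$ (or, when $(1-\mu)k\le1$, subadditivity of $t\mapsto t^{(1-\mu)k}$) together with the Lipschitz bound $|\vv-\vvp|\le|v-v'|$ and $\vv\approx\vvp$ yields
\[
  \bigl|\vv^{(1-\mu)k}-\vvp^{(1-\mu)k}\bigr|\lesssim|v-v'|^{\min(1,(1-\mu)k)}\,\vvp^{((1-\mu)k-1)_+}.
\]
Since $\vvp^{((1-\mu)k-1)_+}|\varphi(v')|\le\|\varphi\|_{L^{\infty,((1-\mu)k-1)_+}}$ by definition of the weighted norm, the second term is $\lesssim|v-v'|^{\min(1,(1-\mu)k)}\|\varphi\|_{L^{\infty,((1-\mu)k-1)_+}}$. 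Finally, any fixed positive power of $r$ decays faster than $\log(1/r)^{-\theta\mu}$ as $r\to0^+$ while both stay bounded on $(0,1/2)$, so $r^{\min(1,(1-\mu)k)}\lesssim\log(1/r)^{-\theta\mu}$ for $r\in(0,1/2)$ with constant depending only on $\theta,\mu,k$; this absorbs the power of $|v-v'|$.

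Combining the two estimates, dividing by $\log(1/|v-v'|)^{-\theta\mu}$, and taking the supremum over all admissible $v,v'$ completes the proof. The argument is elementary throughout; the only steps requiring a moment's care are the comparison $r^p\lesssim\log(1/r)^{-\theta\mu}$ on $(0,1/2)$ — which plays here the role of the monotonicity inequality $\eps^p\lesssim\eps^q$ ($p\ge q$) in ordinary H\"older interpolation — and the bookkeeping of which power of the bracket appears (handled uniformly by the positive part $((1-\mu)k-1)_+$). I do not expect any genuine obstacle.
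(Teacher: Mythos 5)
Your proof is correct and follows essentially the same approach as the paper: same split into the two terms, same treatment of the weight-difference term via the mean value theorem and the observation that $|v-v'|$ (or any positive power of it) is dominated by $\log(1/|v-v'|)^{-\theta\mu}$ on $(0,1/2)$. The only cosmetic difference is in the first term: you bound $|\varphi(v)-\varphi(v')|$ by the geometric mean $A^\mu B^{1-\mu}$ of the two obvious bounds directly, whereas the paper introduces a threshold $R=\vv^{-k}\|\varphi\|_{L^{\infty,k}}[\varphi]_{\logvalpha}^{-1}$ and argues by cases depending on whether $\log(1/|v-v'|)^{-\theta}$ lies above or below $R$; the two routes are equivalent (the case split is just a hands-on derivation of the same $A^\mu B^{1-\mu}$ bound), and yours is marginally tidier.
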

	\begin{proof}
		First, for $(t,v)\neq(t,v') \in \R_+ \times \R^3$ with $|v-v'|<1/2$, we let 
		\begin{equation}\notag
		R=\vv^{-k}\|\phi\|_{L^{\infty, k}}[\varphi]^{-1}_{\logvalpha}.
		\end{equation}
		Then, we obtain
		\begin{equation}\notag
		\begin{split}
		|\vv^{(1-\mu)k}&\varphi(t,v) -\vvp^{(1-\mu)k}  \varphi(t,v')|\\
		&\lesssim
		\vv^{(1-\mu)k}|\varphi(t,v) - \varphi(t,v')|
		+
		|\varphi(t,v')|
		|\vv^{(1-\mu)k} -\vvp^{(1-\mu)k}  |
		\\&
		\lesssim
		\vv^{(1-\mu)k}|\varphi(t,v) - \varphi(t,v')|
		+
		|\varphi(t,v')|
		\vv^{((1-\mu)k-1)_+}|v-v'|
		\\&\lesssim
		\vv^{(1-\mu)k}|\varphi(t,v) - \varphi(t,v')|
		+
		\|\varphi\|_{L^{\infty,((1-\mu)k-1)_+}}
		|v-v' |
		.
		\end{split}
		\end{equation}
		Notice that
		\be\notag
			\frac{
		|v-v' |}{\log(1/|v- v'|)^{-\mu\theta}}
				\lesssim 1.
		\ee
		Hence, we need only bound
		\be\notag
			H := \frac{\vv^{(1-\mu)k}|\varphi(t,v) - \varphi(t,v')|}{\log(1/|v- v'|)^{-\mu\theta}}.
		\ee
		If $\log(1/|v- v'|)^{-\theta} \geq R$, we have
		\begin{equation}\notag
		\begin{split}
		&H
		\leq
		2\vv^{-\mu k}\frac{\|\varphi\|_{L^{\infty,k}}}{R^\mu}
		=2\|\varphi\|_{L^{\infty,k}}^{1-\mu}[\varphi]^{\mu}_{\logvalpha},
		\end{split}
		\end{equation}
		which is the desired inequality.
		
		On the other hand, if $\log(1/|v- v'|)^{-\theta} \leq R$, we see
		\begin{equation}\notag
		\begin{split}
		H
		&=
		\vv^{(1-\mu)k}\frac{| \varphi(t,v) -  \varphi(t,v')|}{\log(1/|v- v'|)^{-\theta}}
		(\log(1/|v- v'|)^{-\theta})^{1-\mu}
		\leq \vv^{(1-\mu)k}[\varphi]^{}_{\logvalpha}
		R^{1-\mu}
		\\&\lesssim
		[\varphi]^{}_{\logvalpha}
		\|\phi\|^{1-\mu}_{L^{\infty, k}}[\varphi]^{\mu-1}_{\logvalpha}
		=
		[\varphi]^{\mu}_{\logvalpha}
			\|\phi\|^{1-\mu}_{L^{\infty, k}},
		\end{split}
		\end{equation}
		which is, again, the desired inequality.  This concludes the proof.
	\end{proof}

\end{appendix}

\section*{Acknowledgments}
The authors would like to sincerely thank Marco Bramanti and Andrea Pascucci for helpful discussions regarding their preprints~\cite{BiagiBramanti,LPP}.  CH was partially supported by NSF grants DMS-2003110 and DMS-2204615. WW was partially supported by an AMS-Simons travel grant.  CH thanks Otis Chodosh for first bringing the references~\cite{Brandt,Knerr} to his attention.

\bibliographystyle{abbrv}
\bibliography{schauder_landau_HW2}

\begin{thebibliography}{10}

\bibitem{Alexandre-Liao-Lin}
R.~Alexandre, J.~Liao, and C.~Lin.
\newblock Some a priori estimates for the homogeneous {L}andau equation with
  soft potentials.
\newblock {\em Kinet. Relat. Models}, 8(4):617--650, 2015.

\bibitem{amuxy2010regularizing}
R.~Alexandre, Y.~Morimoto, S.~Ukai, C.-J. Xu, and T.~Yang.
\newblock Regularizing effect and local existence for the non-cutoff
  {B}oltzmann equation.
\newblock {\em Arch. Ration. Mech. Anal.}, 198(1):39--123, 2010.

\bibitem{amuxy2}
R.~Alexandre, Y.~Morimoto, S.~Ukai, C.-J. Xu, and T.~Yang.
\newblock Bounded solutions of the {B}oltzmann equation in the whole space.
\newblock {\em Kinet. Relat. Models}, 4(1):17--40, 2011.

\bibitem{amuxy2011uniqueness}
R.~Alexandre, Y.~Morimoto, S.~Ukai, C.-J. Xu, and T.~Yang.
\newblock Uniqueness of solutions for the non-cutoff {B}oltzmann equation with
  soft potential.
\newblock {\em Kinet. Relat. Models}, 4(4):919--934, 2011.

\bibitem{amuxy1}
R.~Alexandre, Y.~Morimoto, S.~Ukai, C.-J. Xu, and T.~Yang.
\newblock Local existence with mild regularity for the {B}oltzmann equation.
\newblock {\em Kinet. Relat. Models}, 6(4):1011--1041, 2013.

\bibitem{Anceschi-Zhu}
F.~Anceschi and Y.~Zhu.
\newblock On a spatially inhomogeneous nonlinear {F}okker-{P}lanck equation:
  {C}auchy problem and diffusion asymptotics.
\newblock {\em arXiv preprint arXiv:2102.12795}, 2021.

\bibitem{BiagiBramanti}
S.~Biagi and M.~Bramanti.
\newblock Schauder estimates for {K}olmogorov-{F}okker-{P}lanck operators with
  coefficients measurable in time and {H}\"older continuous in space.
\newblock {\em https://arxiv.org/abs/2205.10270}, 2022.

\bibitem{Bouchut}
F.~Bouchut.
\newblock Hypoelliptic regularity in kinetic equations.
\newblock {\em J. Math. Pures Appl. (9)}, 81(11):1135--1159, 2002.

\bibitem{bramanti2007schauder}
M.~Bramanti and L.~Brandolini.
\newblock Schauder estimates for parabolic nondivergence operators of
  {H}\"ormander type.
\newblock {\em Journal of Differential Equations}, 234(1):177 -- 245, 2007.

\bibitem{BramantiPolidoro}
M.~Bramanti and S.~Polidoro.
\newblock Fundamental solutions for {K}olmogorov-{F}okker-{P}lanck operators
  with time-depending measurable coefficients.
\newblock {\em Math. Eng.}, 2(4):734--771, 2020.

\bibitem{Brandt}
A.~Brandt.
\newblock Interior {S}chauder estimates for parabolic differential- (or
  difference-) equations via the maximum principle.
\newblock {\em Israel J. Math.}, 7:254--262, 1969.

\bibitem{Buckmaster-Vicol}
T.~Buckmaster and V.~Vicol.
\newblock Nonuniqueness of weak solutions to the {N}avier-{S}tokes equation.
\newblock {\em Ann. of Math. (2)}, 189(1):101--144, 2019.

\bibitem{Cameron-Silvestre-Snelson}
S.~Cameron, L.~Silvestre, and S.~Snelson.
\newblock Global a priori estimates for the inhomogeneous {L}andau equation
  with moderately soft potentials.
\newblock {\em Ann. Inst. H. Poincar\'{e} C Anal. Non Lin\'{e}aire},
  35(3):625--642, 2018.

\bibitem{Camillo}
C.~De~Lellis and L.~Sz\'{e}kelyhidi, Jr.
\newblock The {E}uler equations as a differential inclusion.
\newblock {\em Ann. of Math. (2)}, 170(3):1417--1436, 2009.

\bibitem{Desvillettes-Villani-1}
L.~Desvillettes and C.~Villani.
\newblock On the spatially homogeneous {L}andau equation for hard potentials.
  {I}. {E}xistence, uniqueness and smoothness.
\newblock {\em Comm. Partial Differential Equations}, 25(1-2):179--259, 2000.

\bibitem{Desvillettes-Villani-2}
L.~Desvillettes and C.~Villani.
\newblock On the spatially homogeneous {L}andau equation for hard potentials.
  {II}. {$H$}-theorem and applications.
\newblock {\em Comm. Partial Differential Equations}, 25(1-2):261--298, 2000.

\bibitem{Polidoro}
M.~Di~Francesco and S.~Polidoro.
\newblock Schauder estimates, {H}arnack inequality and {G}aussian lower bound
  for {K}olmogorov-type operators in non-divergence form.
\newblock {\em Adv. Differential Equations}, 11(11):1261--1320, 2006.

\bibitem{DongJinZhang}
H.~Dong, T.~Jin, and H.~Zhang.
\newblock Dini and {S}chauder estimates for nonlocal fully nonlinear parabolic
  equations with drifts.
\newblock {\em Anal. PDE}, 11(6):1487--1534, 2018.

\bibitem{DongSeick}
H.~Dong and S.~Kim.
\newblock Partial {S}chauder estimates for second-order elliptic and parabolic
  equations: a revisit.
\newblock {\em Int. Math. Res. Not. IMRN}, (7):2085--2136, 2019.

\bibitem{DLSS}
R.~Duan, S.~Liu, S.~Sakamoto, and R.~M. Strain.
\newblock Global mild solutions of the {L}andau and non-cutoff {B}oltzmann
  equations.
\newblock {\em Comm. Pure Appl. Math.}, 74(5):932--1020, 2021.

\bibitem{Fournier-CMP}
N.~Fournier.
\newblock Uniqueness of bounded solutions for the homogeneous {L}andau equation
  with a {C}oulomb potential.
\newblock {\em Comm. Math. Phys.}, 299(3):765--782, 2010.

\bibitem{FournierGuerin}
N.~Fournier and H.~Gu\'{e}rin.
\newblock Well-posedness of the spatially homogeneous {L}andau equation for
  soft potentials.
\newblock {\em J. Funct. Anal.}, 256(8):2542--2560, 2009.

\bibitem{GIMV}
F.~Golse, C.~Imbert, C.~Mouhot, and A.~F. Vasseur.
\newblock Harnack inequality for kinetic {F}okker-{P}lanck equations with rough
  coefficients and application to the {L}andau equation.
\newblock {\em Ann. Sc. Norm. Super. Pisa Cl. Sci. (5)}, 19(1):253--295, 2019.

\bibitem{GLPS}
F.~Golse, P.-L. Lions, B.~Perthame, and R.~Sentis.
\newblock Regularity of the moments of the solution of a transport equation.
\newblock {\em J. Funct. Anal.}, 76(1):110--125, 1988.

\bibitem{Isotropic-Landau}
P.~T. Gressman, J.~Krieger, and R.~M. Strain.
\newblock A non-local inequality and global existence.
\newblock {\em Adv. Math.}, 230(2):642--648, 2012.

\bibitem{Gualdani-review}
M.~Gualdani and N.~Zamponi.
\newblock A review for an isotropic {L}andau model.
\newblock In {\em P{DE} models for multi-agent phenomena}, volume~28 of {\em
  Springer INdAM Ser.}, pages 115--144. Springer, Cham, 2018.

\bibitem{Gualdani-Guillen}
M.~P. Gualdani and N.~Guillen.
\newblock Estimates for radial solutions of the homogeneous {L}andau equation
  with {C}oulomb potential.
\newblock {\em Anal. PDE}, 9(8):1772--1809, 2016.

\bibitem{GualdaniZamponi-Isotropic-Landau}
M.~P. Gualdani and N.~Zamponi.
\newblock Global existence of weak even solutions for an isotropic {L}andau
  equation with {C}oulomb potential.
\newblock {\em SIAM J. Math. Anal.}, 50(4):3676--3714, 2018.

\bibitem{GuerandMouhot}
J.~Guerand and C.~Mouhot.
\newblock Quantitative {D}e {G}iorgi methods in kinetic theory.
\newblock {\em arXiv preprint arXiv:2103.09646}, 2021.

\bibitem{Guo-2002}
Y.~Guo.
\newblock The {L}andau equation in a periodic box.
\newblock {\em Comm. Math. Phys.}, 231(3):391--434, 2002.

\bibitem{Guo-VPL}
Y.~Guo.
\newblock The {V}lasov-{P}oisson-{L}andau system in a periodic box.
\newblock {\em J. Amer. Math. Soc.}, 25(3):759--812, 2012.

\bibitem{LH}
Q.~Han and F.~Lin.
\newblock {\em Elliptic partial differential equations}, volume~1 of {\em
  Courant Lecture Notes in Mathematics}.
\newblock Courant Institute of Mathematical Sciences, New York; American
  Mathematical Society, Providence, RI, second edition, 2011.

\bibitem{Hao-2020}
Z.~Hao, M.~Wu, and X.~Zhang.
\newblock Schauder estimates for nonlocal kinetic equations and applications.
\newblock {\em J. Math. Pures Appl. (9)}, 140:139--184, 2020.

\bibitem{HS}
C.~Henderson and S.~Snelson.
\newblock {$C^\infty$} smoothing for weak solutions of the inhomogeneous
  {L}andau equation.
\newblock {\em Arch. Ration. Mech. Anal.}, 236(1):113--143, 2020.

\bibitem{HST2018landau}
C.~Henderson, S.~Snelson, and A.~Tarfulea.
\newblock Local existence, lower mass bounds, and a new continuation criterion
  for the {L}andau equation.
\newblock {\em Journal of Differential Equations}, 266(2-3):1536--1577, 2019.

\bibitem{HST2019rough}
C.~Henderson, S.~Snelson, and A.~Tarfulea.
\newblock Local solutions of the {L}andau equation with rough, slowly decaying
  initial data.
\newblock {\em Ann. Inst. H. Poincar\'{e} Anal. Non Lin\'{e}aire},
  37(6):1345--1377, 2020.

\bibitem{Hormander}
L.~H\"{o}rmander.
\newblock Hypoelliptic second order differential equations.
\newblock {\em Acta Math.}, 119:147--171, 1967.

\bibitem{Ilin}
A.~M. Il'in.
\newblock On a class of ultraparabolic equations.
\newblock {\em Dokl. Akad. Nauk SSSR}, 159:1214--1217, 1964.

\bibitem{ImbertMouhot-toy}
C.~Imbert and C.~Mouhot.
\newblock The {S}chauder estimate in kinetic theory with application to a toy
  nonlinear model.
\newblock {\em Ann. H. Lebesgue}, 4:369--405, 2021.

\bibitem{ImbertSilvestre_survey}
C.~Imbert and L.~Silvestre.
\newblock Regularity for the {B}oltzmann equation conditional to macroscopic
  bounds.
\newblock {\em EMS Surv. Math. Sci.}, 7(1):117--172, 2020.

\bibitem{imbert2018schauder}
C.~Imbert and L.~Silvestre.
\newblock The {S}chauder estimate for kinetic integral equations.
\newblock {\em Anal. PDE}, 14(1):171--204, 2021.

\bibitem{KiselevNazarovShterenberg}
A.~Kiselev, F.~Nazarov, and R.~Shterenberg.
\newblock Blow up and regularity for fractal {B}urgers equation.
\newblock {\em Dyn. Partial Differ. Equ.}, 5(3):211--240, 2008.

\bibitem{Knerr}
B.~F. Knerr.
\newblock Parabolic interior {S}chauder estimates by the maximum principle.
\newblock {\em Arch. Rational Mech. Anal.}, 75(1):51--58, 1980/81.

\bibitem{Kolmogorov1934}
A.~Kolmogoroff.
\newblock Zuf\"{a}llige {B}ewegungen (zur {T}heorie der {B}rownschen
  {B}ewegung).
\newblock {\em Ann. of Math. (2)}, 35(1):116--117, 1934.

\bibitem{Lieberman}
G.~M. Lieberman.
\newblock Intermediate {S}chauder theory for second order parabolic equations.
  {IV}. {T}ime irregularity and regularity.
\newblock {\em Differential Integral Equations}, 5(6):1219--1236, 1992.

\bibitem{LPP}
G.~Lucertini, S.~Pagliarani, and A.~Pascucci.
\newblock Optimal regularity for degenerate kolmogorov equations with rough
  coefficients.
\newblock {\em arXiv preprint arXiv:2204.14158}, 2022.

\bibitem{Manfredini}
M.~Manfredini.
\newblock The {D}irichlet problem for a class of ultraparabolic equations.
\newblock {\em Adv. Differential Equations}, 2(5):831--866, 1997.

\bibitem{mouhot2018review}
C.~Mouhot.
\newblock De {G}iorgi--{N}ash--{M}oser and {H}{\"o}rmander theories: new
  interplays.
\newblock In {\em Proceedings of the International Congress of Mathematicians
  (ICM 2018) (In 4 Volumes) Proceedings of the International Congress of
  Mathematicians 2018}, pages 2467--2493. World Scientific, 2018.

\bibitem{Mouhot-Neumann}
C.~Mouhot and L.~Neumann.
\newblock Quantitative perturbative study of convergence to equilibrium for
  collisional kinetic models in the torus.
\newblock {\em Nonlinearity}, 19(4):969--998, 2006.

\bibitem{Silvestre_Landau}
L.~Silvestre.
\newblock Upper bounds for parabolic equations and the {L}andau equation.
\newblock {\em J. Differential Equations}, 262(3):3034--3055, 2017.

\bibitem{SilvestreOpen}
L.~Silvestre.
\newblock Regularity estimates and open problems in kinetic equations.
\newblock {\em arXiv preprint arXiv:2204.06401}, 2022.

\bibitem{SinestrarivonWahl}
E.~Sinestrari and W.~von Wahl.
\newblock On the solutions of the first boundary value problem for the linear
  parabolic equations.
\newblock {\em Proc. Roy. Soc. Edinburgh Sect. A}, 108(3-4):339--355, 1988.

\bibitem{villani-96}
C.~Villani.
\newblock On the {C}auchy problem for {L}andau equation: sequential stability,
  global existence.
\newblock {\em Adv. Differential Equations}, 1(5):793--816, 1996.

\bibitem{villani2002review}
C.~Villani.
\newblock A review of mathematical topics in collisional kinetic theory.
\newblock In {\em Handbook of mathematical fluid dynamics, {V}ol. {I}}, pages
  71--305. North-Holland, Amsterdam, 2002.

\bibitem{villani_hypo}
C.~Villani.
\newblock Hypocoercivity.
\newblock {\em Mem. Amer. Math. Soc.}, 202(950):iv+141, 2009.

\bibitem{Wang-Zhang-2}
W.~Wang and L.~Zhang.
\newblock The {$C^\alpha$} regularity of a class of non-homogeneous
  ultraparabolic equations.
\newblock {\em Sci. China Ser. A}, 52(8):1589--1606, 2009.

\bibitem{Wang-Zhang}
W.~Wang and L.~Zhang.
\newblock The {$C^\alpha$} regularity of weak solutions of ultraparabolic
  equations.
\newblock {\em Discrete Contin. Dyn. Syst.}, 29(3):1261--1275, 2011.

\bibitem{Weber}
M.~Weber.
\newblock The fundamental solution of a degenerate partial differential
  equation of parabolic type.
\newblock {\em Trans. Amer. Math. Soc.}, 71:24--37, 1951.

\bibitem{Wu}
K.-C. Wu.
\newblock Global in time estimates for the spatially homogeneous {L}andau
  equation with soft potentials.
\newblock {\em J. Funct. Anal.}, 266(5):3134--3155, 2014.

\bibitem{Zhu2021}
Y.~Zhu.
\newblock Velocity averaging and {H}\"{o}lder regularity for kinetic
  {F}okker-{P}lanck equations with general transport operators and rough
  coefficients.
\newblock {\em SIAM J. Math. Anal.}, 53(3):2746--2775, 2021.

\end{thebibliography}

\end{document}